\newcommand{\C}{\mathbb {C}}
\newcommand{\Addresses}{{% additional braces for segregating \footnotesize
  \bigskip
  \footnotesize

  C.~Cabrera, \textsc{Unidad Cuernavaca del Instituto de Matem\'aticas. UNAM, M\'exico}
  \par\nopagebreak
  \textit{E-mail:}, C.~Cabrera: \texttt{carloscabrerao@im.unam.mx}
  \medskip
  
  P.~Dominguez \textsc{FCFM, BUAP, Mexico}
  \par\nopagebreak
  \textit{E-mail:}, P.~Dominguez: \texttt{pdsoto@fcfm.buap.mx}
  \medskip 

  P.~Makienko  \textsc{Unidad Cuernavaca del Instituto de Matem\'aticas. UNAM, M\'exico}
  \par\nopagebreak
  \textit{E-mail:}, P.~Makienko: \texttt{makienko@im.unam.mx}}}
\newtheorem{theorem}{Theorem}
\newtheorem{lemma}[theorem]{Lemma}
\newtheorem{corollary}[theorem]{Corollary}
\newtheorem{proposition}[theorem]{Proposition}     
\theoremstyle{definition}                             
\newtheorem*{definition}{Definition}
\newtheorem{example}{Example}
\title{On the amenability of semigroups of entire maps and formal power series.}
\author{C. Cabrera,  P. Dom\'inguez  and P. Makienko}
\begin{document}

\maketitle

\footnotetext{This work was partially supported by PAPIIT  IG100523 and CBF2023-2024-1952.\\ MSC2020: 37F45, 43A07, 37F10}

\begin{abstract}
In this article, we investigate some relations between dynamical and algebraic 
properties of semigroups of entire maps with applications to semigroups of 
formal series. We show that two entire maps fixing the origin 
share the set of preperiodic points, whenever these maps generate a semigroup which 
contains neither free  nor free abelian non-cyclic subsemigroups and 
one of the maps has the origin as a superattracting fixed point. We show
that a subgroup of formal series generated by rational elements is 
amenable, whenever contains no free non-cyclic subsemigroup
generated by rational elements.  We prove that a left-amenable semigroup $S$ 
of entire maps admits a invariant probability measure for a continuous extension of $S$ on
the Stone-C\v{e}ch compactification of the complex plane. Finally, given an entire map $f$, we associate a semigroup 
$S$ such that $f$ admits no ergodic fixed point of the Ruelle operator, whenever 
every finitely generated subsemigroup of $S$ admits a left-amenable Ruelle representation.

\end{abstract}

\section{Introduction}

A theorem due to von Neumann states that amenable groups do not 
contain free non-cyclic subgroups. After this theorem, it has been of big interest  
the classification of groups (semigroups) not containing free 
non-cyclic subgroups (subsemigroups). As an example in the current 
century, we mention in the context of groups a theorem due 
to Margulis (\cite{Margulisfree}) which states: 
\textit{A finitely generated subgroup of the group of orientation
preserving automorphisms of the unit circle $Aut_+(\mathbb{S}^1)$  either admits an 
invariant measure or contains free non-cyclic subgroups}.
Margulis theorem answers a question of Ghys on a version 
of ``Tits alternative'' in this setting, which now it is known as  
the ``Ghys-Margulis alternative". However, $Aut_+(\mathbb{S}^1)$ does 
not satisfy the Tits alternative as it contains an isomorphic copy of the
Thompson group $F,$ see for example \cite{BrinSquier, GhysSergiescu}.

Another result in the spirit of Ghys-Margulis alternative is: 
A finitely generated group of orientation preserving 
automorphisms of the real line either admits an 
invariant Borel measure finite on compact sets or contains 
free non-cyclic subsemigroups (see \cite{Solodov} and also \cite{Beklaryan}).

In contrast, the group of formal series in one variable  is topologically amenable 
and gives an example of  a group satisfying neither the ``Tits alternative" nor 
the ``Ghys-Margulis alternative" as it contains non-cyclic free subgroups 
and admits a finite invariant measure on every Hausdorff compact 
space $X$, where the group acts as a group of self-homeomorphisms
of $X$, see \cite{BabenkoIII}.
 
Recently, in \cite{Tucker, CMAmenability, CMAmenabilityII, 
GhiTucZieve, ZieveZhan} and \cite{PakovichAmenRat}, there
have been established a series of results showing that: 
the Day-von Neumann, the Tits and the Ghys-Margulis alternatives 
all coincide for semigroups of holomorphic endomorphisms 
of the Riemann sphere and relating these algebraic properties 
with dynamical properties. 

Recall that a \textit{non-exceptional rational map} is a non-constant, 
non-injective rational map which is not M\"obius conjugated to either $z^n$, a 
Tchebichev polynomial or a Latt\`es map. Following \cite{CMAmenabilityII}, 
we gather some of these results into the next theorems.

\begin{theorem}\label{th.AmenabilityI}
Let $S$ be a semigroup of rational maps containing a non-exceptional 
element, then the following statements are equivalent.
\begin{enumerate}

\item The semigroup $S$ is right amenable.
\item $S$ contains no non-cyclic free subsemigroups.
\item ${S}$ admits a probability invariant measure $\mu$ 
supported on $\overline{\C}$, which is the measure of 
maximal entropy of every non-injective element of $S$.

\end{enumerate}
\end{theorem}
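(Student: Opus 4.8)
The plan is to establish the cyclic chain of implications $(1)\Rightarrow(2)\Rightarrow(3)\Rightarrow(1)$, isolating the genuine dynamical content in the middle step. For \textbf{$(1)\Rightarrow(2)$} I would first record that $S$ is cancellative: a non-constant rational map is surjective and agrees with another only if they coincide, so $f\circ g=f\circ h$ and $g\circ f=h\circ f$ both force $g=h$. Then I argue by contraposition, using that the free semigroup on two generators is not right amenable (its right F\o lner condition fails, equivalently it admits a paradoxical decomposition). In a cancellative semigroup a right-invariant mean transports to a subsemigroup of this type, so a free non-cyclic subsemigroup of $S$ would contradict the right amenability of $S$. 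This is the semigroup analogue of von Neumann's theorem in the form used in \cite{CMAmenability, CMAmenabilityII}.

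The implication \textbf{$(2)\Rightarrow(3)$} is the core. Fix a non-exceptional $f_0\in S$ and let $\mu:=\mu_{f_0}$ be its measure of maximal entropy. The crucial input, supplied by a ping-pong construction together with the classification of rational maps sharing an equilibrium measure (see \cite{PakovichAmenRat} and its references), is the dichotomy: for non-injective $f,g$, either $\mu_f=\mu_g$ or $\langle f,g\rangle$ contains a free non-cyclic subsemigroup. Hypothesis $(2)$ excludes the second alternative, so every non-injective element shares $\mu$, exceptional elements being absorbed by composition with $f_0$. It remains to handle an injective (M\"obius) element $m\in S$, where I only need the invariance $m_{*}\mu=\mu$. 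Here I invoke the transfer-operator characterization: $\mu$ is the unique probability measure, charging no exceptional point, with $f_0^{*}\mu=(\deg f_0)\mu$. For a non-injective $h\in S$ the map $h\circ m\in S$ is again non-injective, hence $\mu_{h\circ m}=\mu$; since $(h\circ m)^{*}=m^{*}h^{*}$ and $\deg(h\circ m)=\deg h$, comparing the eigenmeasure equations for $h$ and $h\circ m$ gives $m^{*}\mu=\mu$, and therefore $m_{*}\mu=\mu$ because $m$ is invertible. Thus $\mu$ is $S$-invariant and equals the maximal measure of every non-injective element. The decisive obstacle of the whole proof lives here: it is the dichotomy that distinct maximal measures force a free non-cyclic subsemigroup, which rests on the ping-pong argument and the deep classification of maps with a common measure of maximal entropy, not on any soft functional-analytic averaging.

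Finally, for \textbf{$(3)\Rightarrow(1)$}, given the $S$-invariant maximal measure $\mu$ I would construct a right-invariant mean on $\ell^{\infty}(S)$. Fix a non-exceptional base point $z_0$ and recall that, precisely because $\mu$ is the equilibrium measure, the normalized preimages of $z_0$ equidistribute to $\mu$ \emph{independently of the choice of $z_0$}. Building a mean $M$ from this orbital averaging, i.e.\ from the F\o lner-type net furnished by the backward dynamics, the required right-invariance $M(\phi(\,\cdot\,a))=M(\phi)$ reduces exactly to the insensitivity of $\mu$ to the base-point change $z_0\mapsto a(z_0)$, which is the content of the equidistribution just quoted. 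This is the mechanism underlying the amenability criterion developed in \cite{Tucker, CMAmenability, CMAmenabilityII}, and it closes the cycle.
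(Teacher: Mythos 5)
First, a point of comparison: the paper does not prove Theorem \ref{th.AmenabilityI} at all --- it is explicitly presented as a compilation of known results from \cite{Tucker, CMAmenability, CMAmenabilityII, PakovichAmenRat}, so there is no internal proof to measure your argument against. Judged on its own terms, your outline of $(2)\Rightarrow(3)$ captures the right dynamical input (the dichotomy ``common maximal measure or free non-cyclic subsemigroup'' plus the eigenmeasure trick for M\"obius elements), but your $(1)\Rightarrow(2)$ contains a genuine error.

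You claim that in a cancellative semigroup a right-invariant mean ``transports to'' a free non-cyclic subsemigroup, yielding a contradiction. Invariant means do not restrict to subsemigroups, and the implication ``cancellative and right amenable $\Rightarrow$ no free non-cyclic subsemigroup'' is simply false: the paper itself records the counterexample. The semigroup $\langle 2z, 2z+1\rangle$ is free and non-cyclic, yet it sits inside the group generated by it in $\mathrm{Aff}(\C)$, which is solvable, hence amenable (and cancellative). So a purely semigroup-theoretic argument for $(1)\Rightarrow(2)$ cannot work; it must use the non-exceptional hypothesis and dynamics (in \cite{CMAmenability, CMAmenabilityII} one shows right amenability forces all non-injective elements to share the measure of maximal entropy, e.g.\ by applying a right-invariant mean to functions built from the equidistribution of preimages, and only then deduces the absence of free pairs). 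Relatedly, your $(3)\Rightarrow(1)$ is under-justified: producing one invariant measure for one representation gives only $\rho$-amenability for that representation, whereas by Day's theorem (Theorem \ref{th.Day}) right amenability requires invariant states for \emph{all} bounded representations; the actual route in the cited papers passes through the Levin relations and the structure of semigroups sharing a maximal measure, not through a single F\o lner-type averaging on $\overline{\C}$. The true content of the theorem lies precisely in these two implications, and your sketch replaces both with soft arguments that do not survive the exceptional counterexample.
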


Let $RIM(S)$ and $LIM(S)$ stand for the sets of right 
and left invariant means for the semigroup $S$, respectively.

\begin{theorem}\label{th.Polynomials}
Let $S$ be a finitely generated semigroup of polynomials with 
generating set $\mathcal{F}.$ If $S$ contains a 
non-exceptional element, then the following statements are equivalent:

\begin{enumerate}
  
  \item For every pair $P$, $Q \in \mathcal{F}$ there exists a point 
  $z_0\in \overline{\C}$ such that $$\#(\{\bigcup_j P^j(z_0) \cap 
  \bigcup_k Q^k(z_0)\})=\infty.$$
  \item For every pair $P,Q\in S$ there are integers $m,n$ such that $P^m=Q^n.$
  \item The semigroup $S$ is amenable with $RIM(S)\subset LIM(S).$
  
 \end{enumerate}
\end{theorem}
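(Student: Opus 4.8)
The plan is to prove the two equivalences $(1)\Leftrightarrow(2)$ and $(2)\Leftrightarrow(3)$, treating the relation $P^m=Q^n$ (``sharing a common iterate'') as the pivot between the dynamical statement (1) and the measure-theoretic content of (3). Since $S$ contains a non-exceptional element, after replacing maps by iterates I may assume the relevant polynomials have degree at least two, so each carries a measure of maximal entropy $\mu_f$ equal to the equilibrium measure of its Julia set $J(f)$. I would also first record that under any of the three conditions every generator must have degree at least two, since a degree-one generator would, through a common iterate with the non-exceptional element, force an impossible equality of degrees.

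The implication $(2)\Rightarrow(1)$ is immediate. For generators $P,Q\in\mathcal{F}$ choose $R:=P^m=Q^n$ and any $z_0\in\C$ with infinite forward $R$-orbit (uncountably many such points exist); then the pairwise distinct points $R^j(z_0)=P^{jm}(z_0)=Q^{jn}(z_0)$ lie simultaneously in the forward orbits of $z_0$ under $P$ and under $Q$, producing an infinite intersection.

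I expect $(1)\Rightarrow(2)$ to be the main obstacle. The central input is an orbit-intersection theorem of dynamical Mordell--Lang type, in the spirit of \cite{GhiTucZieve}: if two polynomials of degree $\ge 2$ admit a common point whose forward orbits meet infinitely often, then they share a common iterate. Applying this to each pair of generators gives $P^m=Q^n$ for all $P,Q\in\mathcal{F}$, so in particular all generators share a single Julia set $J$. The remaining, more delicate, step is the passage from generators to arbitrary words: by the classification of polynomials with a prescribed Julia set (Schmidt--Steinmetz, Beardon), the family $\Sigma(J)$ of all polynomials with Julia set $J$ is, modulo a finite cyclic group of affine symmetries of $J$, the cyclic semigroup generated by a single primitive polynomial $p_0$. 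As $S\subseteq\Sigma(J)$, every element is a symmetry composed with a power of $p_0$; a suitable power of each element annihilates the symmetry part and lands in $\langle p_0\rangle$, and two powers of $p_0$ always have a common iterate. The subtle point is to control how the finite symmetry group interacts with $p_0$, so that the common iterates persist for the genuine elements of $S$ and not merely up to symmetry.

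For $(2)\Leftrightarrow(3)$ I would lean on Theorem~\ref{th.AmenabilityI}. If (3) holds then $S$ is in particular right amenable, so by Theorem~\ref{th.AmenabilityI} it carries a common invariant probability measure equal to the measure of maximal entropy of each non-injective element; hence all elements share $\mu$ and thus the Julia set $J$, and the $\Sigma(J)$-analysis above returns (2). Conversely, (2) embeds $S$ into the virtually cyclic semigroup $\Sigma(J)$, which is amenable, and whose commutative cyclic core forces right-invariant means to be left-invariant, yielding $RIM(S)\subset LIM(S)$ and therefore (3). The two genuine difficulties I anticipate are the orbit-intersection estimate powering $(1)\Rightarrow(2)$ and the precise bookkeeping of the finite symmetry group of $J$ throughout; the degenerate configurations (degree-one generators, or orbits escaping to $\infty$) are treated separately and do not affect the main line of argument.
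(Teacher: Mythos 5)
You should first be aware that the paper does not prove Theorem~\ref{th.Polynomials} at all: it is stated as a survey of results imported from \cite{CMAmenabilityII} and the related literature (\cite{Tucker, GhiTucZieve, ZieveZhan}), so there is no in-paper argument to compare against. Judged on its own, your proposal gets the right key input for $(1)\Rightarrow(2)$ (the Ghioca--Tucker--Zieve orbit-intersection theorem) and $(2)\Rightarrow(1)$ is fine, but the two load-bearing steps after that are broken. First, the passage from generators to arbitrary elements via Schmidt--Steinmetz is wrong as written: it is \emph{not} true that for $f=\sigma\circ p_0^n\in\Sigma(J)$ some power of $f$ ``annihilates the symmetry part.'' Using the standard relation $p_0\circ\sigma=\sigma^{\deg p_0}\circ p_0$, one computes $(\sigma\circ p_0)^k=\sigma^{e_k}\circ p_0^k$ with $e_k=1+d+\dots+d^{k-1}$; for $d=2$ and $\sigma$ of order $2$ this exponent is always odd, so no iterate of $\sigma\circ p_0$ ever equals an iterate of $p_0$. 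Indeed, sharing the Julia set is strictly weaker than condition (2) --- that is exactly the gap between Theorem~\ref{th.Several} and Theorem~\ref{th.Polynomials} --- so any argument that derives (2) from ``$S\subseteq\Sigma(J)$'' alone proves too much. The propagation that actually works is: pairwise common iterates of the generators yield a single $R$ which is a common iterate of all generators, every element of $S$ then commutes with $R$, and Proposition~\ref{pr.ErLevRitt}(2) (Ritt--Eremenko) converts commutation with the non-exceptional $R$ into a shared iterate with $R$, hence pairwise shared iterates throughout $S$.

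Second, your treatment of $(2)\Leftrightarrow(3)$ has two genuine errors. In $(2)\Rightarrow(3)$ you invoke ``$S$ embeds into the amenable semigroup $\Sigma(J)$, hence is amenable'': amenability does \emph{not} pass to subsemigroups (the paper itself records that every semigroup embeds into an amenable one), so this step proves nothing; one must construct the invariant means directly from the common-iterate structure. In $(3)\Rightarrow(2)$ you use only right amenability, never the hypothesis $RIM(S)\subset LIM(S)$; but right amenability alone gives only a common maximal-entropy measure, a common Julia set and the Levin relations of Proposition~\ref{pr.ErLevRitt}(1), which, as the example above shows, is strictly weaker than $P^m=Q^n$. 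The extra condition $RIM(S)\subset LIM(S)$ (equivalently, left amenability compatible with the right means, which for these cancellative semigroups forces a left Ore/reversibility condition) is precisely what upgrades the Levin relations to genuine common iterates, and your argument never touches it.
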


The following theorem summarizes the results for polynomial semigroups 
given in \cite{BakerDeMarco, Tucker, CMAmenability, CMAmenabilityII, ZieveZhan, PakovichAmenRat} 
and  \cite{Ye}, which will serve us as a guide. 

\begin{theorem}\label{th.Several}
Let $S$ be a semigroup of non-injective polynomials which 
contains a non-exceptional element, then the following are equivalent:

\begin{enumerate}
 \item Every pair of elements in $S$ shares the Julia set.
 \item Every pair of elements in $S$ shares the measure of maximal entropy.
 \item Every pair of elements in $S$ shares the set of preperiodic points. 
 \item The semigroup $S$ is right amenable.
 \item The semigroup $S$ is nearly abelian. 
 \item The semigroup $S$ contains no non-cyclic free subsemigroups. 
\end{enumerate}
\end{theorem}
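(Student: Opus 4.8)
The plan is to organize the six conditions into three interlocking blocks and to glue them using Theorem~\ref{th.AmenabilityI} as the main engine. Since polynomials are rational maps and, by hypothesis, every element of $S$ is non-injective while $S$ contains a non-exceptional element, Theorem~\ref{th.AmenabilityI} applies verbatim and yields at once $(2)\Leftrightarrow(4)\Leftrightarrow(6)$. Here one only needs to observe that the pairwise statement $(2)$ is equivalent to item $(3)$ of Theorem~\ref{th.AmenabilityI}, namely the existence of a single $S$-invariant probability measure equal to the measure of maximal entropy of every element: fixing one element $f_0$, the equalities $\mu_g=\mu_{f_0}$ for all $g$ upgrade pairwise agreement to a common measure $\mu$, and since $g_*\mu_g=\mu_g$ this common $\mu$ is invariant under every element, hence $S$-invariant.

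Next I would establish the analytic block $(1)\Leftrightarrow(2)\Leftrightarrow(3)$. For $(1)\Leftrightarrow(2)$ the tool is Brolin's theorem: for a polynomial the measure of maximal entropy is the equilibrium (harmonic) measure of its Julia set, and the support of that measure is exactly the Julia set. Hence equal measures force equal supports, giving $(2)\Rightarrow(1)$, while equal Julia sets have equal equilibrium measures, giving $(1)\Rightarrow(2)$. For $(2)\Leftrightarrow(3)$ I would appeal to the equidistribution of preperiodic points towards the measure of maximal entropy together with the unlikely-intersection rigidity of \cite{BakerDeMarco}: two polynomials share their measure of maximal entropy if and only if they share their set of preperiodic points, the nontrivial direction being that an \emph{infinite} common set of preperiodic points already forces $\mu_f=\mu_g$.

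It remains to attach the algebraic condition $(5)$. The implication $(5)\Rightarrow(1)$ is the soft direction: from the definition of a nearly abelian semigroup there is a compact family $\Phi$ of affine maps preserving the Julia set $J(S)$ with $fg=\phi\,gf$ for a suitable $\phi\in\Phi$, and one checks that each generator then has Julia set equal to $J(S)$, so all pairs share the Julia set. One also gets $(5)\Rightarrow(6)$ directly: pushing the symmetries to the front, every word in two generators can be put via repeated use of $fg=\phi\,gf$ into a normal form $\psi\, f^{a}g^{b}$ with $\psi$ ranging in the finite family, so the subsemigroup has sub-exponential growth and cannot contain a free non-cyclic subsemigroup. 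The converse $(1)\Rightarrow(5)$ is where the real work lies: using the rigidity of polynomials with a common Julia set, the group $\Sigma(J)=\{\sigma \text{ affine}: \sigma(J)=J\}$ of affine symmetries of the shared Julia set $J$ is finite precisely because the non-exceptional hypothesis excludes the circle and segment cases, and every pair of elements commutes up to an element of the finite, hence compact, family $\Sigma(J)$, which is exactly the nearly abelian structure.

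The main obstacle is this last implication $(1)\Rightarrow(5)$. Establishing the finiteness of $\Sigma(J)$ and the near-commutation relation requires the classification of polynomial pairs sharing a Julia set and a careful use of the exclusion of $z^n$, Tchebichev and Latt\`es maps; in the exceptional cases $\Sigma(J)$ contains a rotation subgroup and fails to be a compact family of rigid symmetries, so the nearly abelian description genuinely breaks down there. Once this rigidity is in hand the three blocks combine: $(2)\Leftrightarrow(4)\Leftrightarrow(6)$ from Theorem~\ref{th.AmenabilityI}, $(1)\Leftrightarrow(2)\Leftrightarrow(3)$ from potential theory and \cite{BakerDeMarco}, and $(1)\Leftrightarrow(5)$ from the symmetry analysis, so that all six statements are equivalent.
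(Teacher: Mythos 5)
You should first be aware that the paper offers no proof of Theorem~\ref{th.Several}: it is stated explicitly as a summary of results from \cite{BakerDeMarco, Tucker, CMAmenability, CMAmenabilityII, ZieveZhan, PakovichAmenRat} and \cite{Ye}, ``which will serve us as a guide,'' so there is no in-paper argument to compare against. Your assembly is essentially the standard one and matches how those cited results fit together: Theorem~\ref{th.AmenabilityI} for $(2)\Leftrightarrow(4)\Leftrightarrow(6)$ (your observation that pairwise equality of the maximal-entropy measures upgrades to a single $S$-invariant measure is the right glue), Brolin's theorem for $(1)\Leftrightarrow(2)$, the unlikely-intersection rigidity of \cite{BakerDeMarco} for $(2)\Leftrightarrow(3)$ (consistent with Proposition~\ref{pr.ErLevRitt}), and the Atela--Hu/Beardon classification of polynomials with a common Julia set for $(1)\Leftrightarrow(5)$. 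You also correctly note that under hypothesis $(1)$ the non-exceptional element forces $J$ to be neither a circle nor a segment, so the affine symmetry group $\Sigma(J)$ is finite.

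Two points deserve care. First, you treat compactness of the commutator family and its preservation of $J(S)$ as part of the definition of nearly abelian; the paper's definition deliberately drops both assumptions, and for polynomial semigroups these are \emph{derived} facts (via Lemma~\ref{lm.algebraic}, the theorem of \cite{AtelaHu} and Corollary~24 of \cite{CMAmenability}), so your ``soft'' implication $(5)\Rightarrow(1)$ actually requires that input. Second, your direct growth argument for $(5)\Rightarrow(6)$ --- normal forms $\psi f^a g^b$ --- only bounds the growth if $\psi$ ranges over a set of subexponential size, which you get from finiteness of $\Sigma(J)$, itself only available after the shared-Julia-set step; as presented it is mildly circular. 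Since $(5)\Rightarrow(1)\Rightarrow(2)\Rightarrow(6)$ already closes the cycle, this redundant arrow can simply be deleted. Neither point is a genuine obstruction.
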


This article is devoted to the study of the relations between 
the items of Theorem \ref{th.Several} for semigroups of transcendental entire maps and 
semigroups of formal series, whenever it makes sense.

Also, the motivation for the generalization arises from dynamical systems. 
For instance, every abelian semigroup is nearly abelian and right amenable. Hence, 
an implication from either item 4 or item 5 to item 1  can be regarded 
as generalizations of Baker's conjecture which states: 
\textit{Every pair of permutable entire maps shares the Julia set} 
(\cite{BakerPower} and \cite{BakerConj}). Even more, any equivalence 
to item 1 in Theorem \ref{th.Several} gives 
an algebraic characterization of semigroups of transcendental 
entire maps having the same Julia set. 

Some caution must be taken; outside of the polynomial setting some 
implications are false. In the rational case, there are two rational maps 
$f,g$ sharing the measure of maximal entropy, but not generating a nearly abelian semigroup \cite{Ye}.
Moreover, the concept of measure of maximal  entropy does not make 
sense for transcendental entire maps.  However, according to Theorem \ref{th.Several}, 
the property of sharing the measure of maximal entropy can be replaced by the property of sharing the 
set of preperiodic points. For semigroups of transcendental entire maps, this is a 
stronger property than sharing the Julia sets as the following example shows.

\begin{example}\label{ex.SameJulia} Let $f(z)=z+1-\exp(z)/2$ and 
$g(z)=f(z)+2\pi i$.  Since $f$ commutes with the map $z\mapsto z+2\pi i$, 
then $S=\langle f, g \rangle$  is a non-cyclic  free abelian semigroup satisfying  $J(S)=J(f)=J(g)$. 
But  $Prep(f)\cap Prep(g)=\emptyset$. Moreover, $f$ and $g$ do 
not share a common invariant compact set and, hence, neither a 
common invariant probability measure. Nevertheless, as it will 
be shown in Theorem \ref{th.AMEStoneCech}, the extension of
$S$ as a semigroup of continuous endomorphisms of $\beta(\C)$, 
the Stone-\v{C}ech compactification  of $\C$, admits a finite invariant measure.
\end{example}

More generally, two commuting entire (or rational) maps either 
share an iteration or form a free abelian semigroup  
as in the previous example.  In contrast, according to the main theorem in \cite{EreFunc} 
(see also Proposition \ref{pr.ErLevRitt}), every non-cyclic abelian semigroup $S$ of 
rational  maps  containing an non-exceptional element contains no
non-cyclic free abelian subsemigroup. The simplest example of 
a free abelian non-cyclic semigroup of exceptional rational maps is 
$\langle z^2, z^3 \rangle$.

Recall that two maps $f$ and $g$ satisfy the \textit{dynamical
intersection property} if $$card(\mathcal{O}_f(z_0)\bigcap 
\mathcal{O}_g(z_0))=\infty,$$ for some 
$z_0\in \C$, where $\mathcal{O}_f(z_0)=\bigcup_{n\geq 0} \{f^n(z_0)\}$ is the 
forward orbit of $z_0$ under the map $f$. By Theorem \ref{th.Polynomials}, 
the dynamical intersection property of two polynomials $P$ and $Q$ 
is equivalent to the amenability of the semigroup $S=\langle P, Q \rangle$ 
with additional conditions, such  as $RIM(S) 
\subset LIM(S)$.  However, this is not the 
case for semigroups of transcendental entire maps as Example \ref{ex.SameJulia} shows, 
since $RIM(S)=LIM(S)$ for every abelian semigroup $S$.

The following example presents two maps which generate a free semigroup 
and  satisfy the dynamical intersection property.

\begin{example}  Let $g$ be a polynomial with integer coefficients 
and let $f$ be a transcendental entire map with $f(n)=1$ 
for every $n\in \mathbb{Z}$. For instance, take $f(z)=e^{2\pi i z}$.
If $h= gf$, then $h$ and $g$ share the forward orbit of 
every integer $n\in \mathbb{Z}$. 
\end{example}

From Proposition \ref{pr.Bottcher} (below), if $g(z)=z^m$, 
with $m\geq 2$ and $h$ is as above, then the semigroup $\langle g, h\rangle$ is free. Even more, 
we suggest the following conjecture:
\medskip

\textit{Every  semigroup generated by a non-injective polynomial and a transcendental 
entire map is free.}
\medskip

We sum up other intersection properties for entire maps which will be
discussed throughout article.

\begin{enumerate}
\item The maps $f$ and $g$ have a common periodic point.
\item The maps $f$ and $g$ share the set of all preperiodic points.
\item The maps $f$ and $g$ share a common periodic 
Fatou component. 
\item The maps $f$ and $g$ share a common iteration. 
\end{enumerate}

The simplest non-trivial example of a semigroup satisfying the first property 
is the semigroup of entire maps fixing the origin, which we regard as 
a subsemigroup of the semigroup of formal series of one variable with 
complex coefficients. Let us recall some basic notions and definitions 
of the theory of semigroups of formal series (see for example \cite{BabenkoIV, BakerPower} 
and the references within).   

\medskip

Let $\mathfrak{S}$ be the set of formal power series in one variable $z$ of the form 
$$a_1z+a_2 z^2+..., \textnormal{ where } a_i \in \C,$$
we recall that:

\begin{enumerate}
 
 \item $\mathfrak{S}$ equipped with composition (substitution) is a semigroup.

 \item $\Gamma=\{s\in \mathfrak{S}| a_1\neq 0\}$ is a subgroup of 
 $\mathfrak{S}$, the  group  $\Gamma_1\subset \Gamma$ of all  series with $a_1=1$
 is known as the \textit{group of formal series}. The group  $\Gamma_1$ 
 is normal in $\Gamma$.

 \item Let $s\in \mathfrak{S}$ with $a_1\neq 1$ and let $m$ be 
 the minimum number such that $a_m\neq 0$ in $s$, then there exists
 $\gamma_s\in \Gamma_1$ such that $\gamma_s \circ s \circ \gamma_s^{-1}=a_m z^m$. 
 Moreover, for $m\geq 2$, we can  choose $\gamma_s\in \Gamma$, such that 
 $\gamma_s$ conjugates  $s$ to $z^m$. In analogy with holomorphic maps,  
 we call $\gamma_s$  the (formal) K\"onig's coordinate  when $m=1$ and 
 the (formal) B\"ottcher's coordinate of $s$, when $m\geq 2.$ 
 
 \end{enumerate}

\noindent \textbf{Remark}. The full characterization of conjugacy classes 
(with respect to $\Gamma$) of elements in $\mathfrak{S}$ 
(including the case of $a_1=1$) is given in Proposition \ref{pr.SeriesLineariz} below. 
\medskip

The group $\Gamma_1$ is an object of intensive attention. For example, in \cite{BabenkoIII} it 
is proved that  $\Gamma_1$ is \textit{topologically amenable} as a 
topological group (for definitions and further properties see 
\cite{BabenkoII} and \cite{BabenkoIII}). Topological amenability 
implies, in particular, that if $\Gamma_1$ acts on a compact 
Hausdorff space $X$ by automorphisms, then $X$ admits a 
$\Gamma_1$-invariant probability measure.  However, $\Gamma_1$ 
is not amenable as a group, since it contains non-cyclic free  
subgroups, see \cite{BabenkoIII}.

For $\mathfrak{S}_2=\mathfrak{S}\setminus \Gamma$, the following
result was shown in \cite{ZieveZhan}, see also \cite{PakovichrightamenPow}.

 \begin{proposition}\label{pr.Bottcher}
 Given  $f,g\in \mathfrak{S}_2$ with
 $$f(z)=a_mz^m+...$$ and
 $$g(z)=b_n z^n+...$$
 where $n,m\geq 2$. Let $\gamma_f\in \Gamma_1$ be  the B\"ottcher coordinate of $f$, then
 $$\langle f, g \rangle \simeq \langle z^m,
 \gamma_f \circ g \circ \gamma_f^{-1}\rangle$$ 
 and either 
 \begin{enumerate}
 \item $\langle z^m,\gamma_f \circ g \circ \gamma_f^{-1}\rangle$ is free; or
 \item $ \gamma_f\circ g \circ \gamma_f^{-1}=\omega z^n$ where $\omega$ is a root of unity.
\end{enumerate}
\end{proposition}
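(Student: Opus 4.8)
The plan is to establish the isomorphism first and then reduce the dichotomy to a statement about the semigroup $\langle z^m, G\rangle$, where $G:=\gamma_f\circ g\circ\gamma_f^{-1}$. By the normalization recalled before the statement, the B\"ottcher coordinate $\gamma_f$ conjugates $f$ to $z^m$, and conjugation by the fixed invertible series $\gamma_f$ is an automorphism of the composition semigroup; applying it to the two generators gives $\langle f,g\rangle\simeq\langle z^m,G\rangle$ with $G$ of order $n\geq 2$. It therefore remains to prove that $\langle z^m,G\rangle$ is free unless $G=\omega z^n$ with $\omega$ a root of unity.

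Suppose $\langle z^m,G\rangle$ is not free, and choose a relation $W_1=W_2$ between two distinct reduced words of minimal total length. Two elementary invariants organize the analysis. First, composition multiplies orders, $\mathrm{ord}(U\circ V)=\mathrm{ord}(U)\,\mathrm{ord}(V)$. Second, the leading coefficient obeys the cocycle identity $\lambda_{U\circ V}=\lambda_U\,\lambda_V^{\mathrm{ord}(U)}$, so that for a word $W=X_1\circ\cdots\circ X_\ell$ with each $X_i\in\{z^m,G\}$ one gets $\lambda_W=c^{E(W)}$, where $c=\lambda_G$ and $E(W)=\sum_{i:\,X_i=G}\prod_{j<i}\mathrm{ord}(X_j)$. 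Since right composition by a non-constant series is injective, I can cancel a common last letter from $W_1$ and $W_2$; by minimality (and since the identity, of order $1$, cannot equal a nonempty word of order $\geq 2$) the reduced relation has the form $A\circ z^m=B\circ G$, where $A,B$ are words (possibly empty) and the two sides end in different generators. Comparing orders yields $m\,\mathrm{ord}(A)=n\,\mathrm{ord}(B)$.

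The key structural observation is that $A\circ z^m$ only involves exponents divisible by $m$; hence $B\circ G$ is invariant under $z\mapsto\eta z$ for every $m$-th root of unity $\eta$. I would exploit this $\mu_m$-symmetry to force $G$ to be a monomial. The model case is the length-two relation $z^m\circ G=G\circ z^m$, i.e. $G(z)^m=G(z^m)$: writing $G=cz^n+c_{n+r}z^{n+r}+\cdots$ with $c_{n+r}\neq0$, the coefficient of $z^{mn}$ gives $c^{m}=c$, so $c^{m-1}=1$, while the coefficient of $z^{mn+r}$ equals $m\,c^{m-1}c_{n+r}$ on the left and $0$ on the right, forcing $c_{n+r}=0$; thus commuting with $z^m$ already yields $G=cz^n$ with $c^{m-1}=1$. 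The strategy in general is to reduce the relation $A\circ z^m=B\circ G$ to this commuting situation by descent: peeling the leading generators of $A$ and $B$ and using both the injectivity of right composition and the fact that $U^m=V^m$ forces $U=\zeta V$ for some $m$-th root of unity $\zeta$, one shortens the relation at the cost of introducing root-of-unity twists, until the surviving relation expresses that $G$ commutes, up to such a twist, with a power of $z^m$.

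Once $G=cz^n$ is a monomial, every word becomes a monomial $\lambda_W z^{\mathrm{ord}(W)}$, so $W_1=W_2$ is equivalent to $\mathrm{ord}(W_1)=\mathrm{ord}(W_2)$ together with $c^{E(W_1)}=c^{E(W_2)}$. If $c$ were not a root of unity this would give $E(W_1)=E(W_2)$, and a short combinatorial lemma asserting that the pair $(\mathrm{ord}(W),E(W))$ separates distinct reduced words would contradict $W_1\neq W_2$; hence $c$ is a root of unity and we land in case (2). That case is genuinely non-free, since the elements of $\langle z^m,\omega z^n\rangle$ are maps $z\mapsto\zeta z^k$ with $\zeta$ in the finite group $\langle\omega\rangle$, so a pigeonhole count on words of a fixed length produces collisions. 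The main obstacle is the descent of the previous paragraph: controlling \emph{arbitrarily long} words $B$, that is, proving rigorously that the $\mu_m$-symmetry of $B\circ G$ forces $G$ to be a monomial. Concretely, one must rule out accidental cancellations in the perturbation-gap invariant (the gap $\delta(H)$ between the two lowest exponents of $H$, with $\delta=\infty$ for a monomial), which satisfies $\delta(U\circ V)=\min\bigl(\delta(V),\mathrm{ord}(V)\,\delta(U)\bigr)$ away from such cancellations, while simultaneously keeping track of the root-of-unity factors accumulated at each peeling step; handling these uniformly, rather than case by case on the first perturbation term of $G$, is where the real work lies.
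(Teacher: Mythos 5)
You should first note that the paper does not prove this proposition at all: it is imported verbatim from \cite{ZieveZhan} (see also \cite{PakovichrightamenPow}), so there is no internal proof to measure your argument against. Judged on its own terms, your write-up is a proof \emph{strategy} with the decisive step missing, and you say so yourself. The parts you do carry out are correct: conjugation by the fixed invertible series $\gamma_f$ is an automorphism of $(\mathfrak{S},\circ)$, giving the isomorphism; right cancellativity lets you reduce a minimal relation to $A\circ z^m=B\circ G$ with distinct terminal letters; the order and leading-coefficient invariants $\mathrm{ord}(U\circ V)=\mathrm{ord}(U)\mathrm{ord}(V)$ and $\lambda_{U\circ V}=\lambda_U\lambda_V^{\mathrm{ord}(U)}$ are right; and the model computation showing that $G(z)^m=G(z^m)$ forces $G=cz^n$ with $c^{m-1}=1$ is correct. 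Likewise, case (2) is genuinely non-free by the pigeonhole count you give.

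The genuine gap is the descent: you must show that \emph{any} relation $A\circ z^m=B\circ G$, with $A$ and $B$ arbitrarily long words, forces $G$ to be a monomial with root-of-unity coefficient, and you explicitly defer this (``where the real work lies''). The $\mu_m$-symmetry of $B\circ G$ and the gap invariant $\delta(U\circ V)=\min\bigl(\delta(V),\mathrm{ord}(V)\delta(A)\bigr)$ are plausible tools, but the peeling step is not routine: when you strip the leading letter of $B$ you do not land back in a relation of the same shape (the inner words need not be words in the generators after extracting $m$-th roots), the root-of-unity twists you introduce must be shown to stay inside a controlled finite group, and the ``accidental cancellations'' in the gap invariant are exactly the phenomenon that could defeat the induction. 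In addition, the final step in the monomial case rests on an unproved combinatorial lemma that the pair $\bigl(\mathrm{ord}(W),E(W)\bigr)$ with $E(W)=\sum_{i:\,X_i=G}\prod_{j<i}\mathrm{ord}(X_j)$ separates distinct reduced words; this is true but needs an argument (e.g.\ a positional/digit argument on the strictly increasing partial products), especially when $m$ and $n$ are multiplicatively dependent. As it stands the proposal establishes the isomorphism and the easy implications, but not the dichotomy, which is the content of the proposition.
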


This proposition was generalized in \cite{PakovichPowerseries} 
to classify all finitely generated right amenable 
subsemigroups of $\mathfrak{S}_2.$ In general, if 
$f\in \mathfrak{S}\setminus \mathfrak{S}_2$, then  
Proposition \ref{pr.Bottcher} is false, even when $g$ is a finite 
series (polynomial), see \cite{CMAmenabilityII}. 

We organize the results in this paper into the following subsections. 

\subsection{Semigroups of entire maps and formal series without non-cyclic free subsemigroups}

We say that an element $g=a_1z +a_2 z^2+...\in \mathfrak{S}$ 
is \textit{rational} if  $g$ has positive radius of convergence and
converges to a rational function which we also denote by $g$. In  
this setting, the substitution rule on $\mathfrak{S}$ for these rational elements
becomes the composition of rational maps considered as holomorphic 
endomorphisms  of the Riemann sphere. Hence, the whole 
collection $\mathfrak{RS}$ of rational elements 
in $\mathfrak{S}$ forms  a subsemigroup which 
can be identified, by a canonical isomorphism, with the subsemigroup of
rational endomorphisms of $\overline{\C}$ fixing $z=0$. 
If  $\mathfrak{S}_0=\{s\in \mathfrak{S}: s \textnormal{ is a finite series}\}$,  
then $\mathfrak{S}_0 \subset \mathfrak{RS}$.
Notice that both $\mathfrak{S}_0$ and $\mathfrak{RS}$ are dense subsemigroups 
of $\mathfrak{S}$ with respect to the topology of 
convergence on coefficients. 

\begin{definition}

We say that a group (semigroup) $G\subset \mathfrak{S}$ is a  \emph{rational group 
(semigroup)} if $G$ is generated as a group (semigroup) by elements 
in $\mathfrak{RS}$.
\end{definition}

Let $Ent(\C)$ be the semigroup of all entire maps on $\C$. If
$\mathfrak{H}=\{s\in \mathfrak{S}: s(z)=a_1 z+a_2 z^2+...\textnormal{ 
is absolutely convergent } \forall z\in \C \},$ then $\mathfrak{H}$ 
can be identified with the subsemigroup $zEnt(\C)\subset Ent(\C)$ 
of entire maps fixing the origin. Even more, $\mathfrak{S}_0$ is a 
dense subset of $\mathfrak{H}$ in the compact open 
topology of the complex plane, and 
$\mathfrak{S}_0=\mathfrak{H}\cap \mathfrak{RS}$. 
Hence, the semigroup $\langle \mathfrak{H}, \mathfrak{RS}\rangle$  can 
be identified with the semigroup of meromorphic maps fixing the 
origin equipped with the composition as the multiplication. This 
identification induces a non-injective action for a semigroup 
$S\subset \langle \mathfrak{H}, \mathfrak{RS}\rangle$ on  subsets of $\overline{\C}$, 
even when $S$ generates a group in $\mathfrak{S}$. 

Now, we formulate one of the main results in this paper which 
gives a positive answer to a version of the Day-von Neumann 
amenability conjecture for rational subgroups of $\Gamma.$

\begin{theorem}\label{th.polysemi}
A rational subgroup $G\subset \Gamma$ is amenable whenever 
$G$ contains no non-cyclic free rational subsemigroup. 
\end{theorem}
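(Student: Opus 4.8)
The plan is to reduce the amenability of the group $G$ to the dynamics of its rational elements, and then to propagate the resulting algebraic structure across the (generally transcendental) formal inverses. Fix rational generators of $G$ lying in $\mathfrak{RS}\cap\Gamma$ and let $S$ be the subsemigroup they generate without inverses. Since amenability is closed under directed unions of subgroups and $G$ is the union of its finitely generated rational subgroups, I may assume $G=\langle f_1,\dots,f_k\rangle$ as a group and $S=\langle f_1,\dots,f_k\rangle$ as a semigroup; then $S$ is a rational subsemigroup of $G$, so by hypothesis $S$ contains no non-cyclic free subsemigroup. First I would dispose of the degenerate case in which every $f_i$ has degree one: there $G$ is a subgroup of the stabiliser of $0$ in the M\"obius group, which is conjugate to the complex affine group $\{z\mapsto az+b\}$ and hence solvable, so $G$ is amenable. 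Assume therefore that some generator has degree at least two.

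The heart of the matter is the non-exceptional case, where $S$ contains a non-exceptional element. By Theorem \ref{th.AmenabilityI}, the absence of non-cyclic free subsemigroups forces $S$ to be right amenable and produces a probability measure $\mu$ on $\overline{\C}$, invariant under $S$, equal to the measure of maximal entropy of every non-injective element. Invoking the accompanying structure theory (the rational counterpart of Theorem \ref{th.Several}), $S$ is nearly abelian: there is a group $\Sigma$ of M\"obius transformations preserving $J(S)$ and $\mu$ such that each pair of generators satisfies an identity of rational maps $f_i\circ f_j=\sigma_{ij}\circ f_j\circ f_i$ with $\sigma_{ij}\in\Sigma$, and each $f_i$ normalises $\Sigma$ through relations $f_i\circ\sigma=\sigma'\circ f_i$. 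Because the M\"obius maps preserving a probability measure whose support exceeds two points form a compact group, $\Sigma$ is compact; in the non-exceptional case it is in fact finite, being a group of M\"obius symmetries of a non-exceptional Julia set, and hence amenable as a discrete group.

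Now I would transport these relations into $\Gamma$. Each displayed identity is an equality of rational functions fixing $0$, hence an equality of formal power series valid in $\Gamma$, where the generators are invertible; inverting gives $f_i f_j f_i^{-1}f_j^{-1}=\sigma_{ij}$ with no appeal to rational dynamics for the inverses. Each commutator $\sigma_{ij}$, being a composition of origin-fixing maps, fixes $0$, so it lies in the amenable subgroup $\Sigma_0=\Sigma\cap\Gamma$ of origin-fixing symmetries; likewise $f_i^{\pm1}\sigma_{jk}f_i^{\mp1}$ fixes $0$ and equals an element of $\Sigma$ by the normalising relation, hence lies in $\Sigma_0$. Thus the normal closure of the generator-commutators is contained in $\Sigma_0$, which shows $[G,G]\subseteq\Sigma_0$ while $G/[G,G]$ is abelian. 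As $\Sigma_0$ is amenable, $G$ is metabelian-by-amenable and therefore amenable. The exceptional case, where every element of $S$ is conjugate to a power, a Tchebichev, or a Latt\`es map, follows the same scheme once $\Sigma$ is produced by hand: the no-free-subsemigroup hypothesis forces the generators to share a common semiconjugacy to affine self-maps of $\C^{*}$ or of a torus, so that they again commute modulo an abelian group of the corresponding (rotation, respectively torus) symmetries, and the identical extension argument applies.

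The main obstacle, and the step demanding the most care, is precisely this passage from $S$ to $G$: the inverses $f_i^{-1}$ are genuinely transcendental formal series about which rational dynamics says nothing, so the entire burden falls on first establishing the nearly abelian identities as equalities in $\mathfrak{RS}$ and only afterwards inverting them algebraically in $\Gamma$. Guaranteeing that the symmetry group entering these identities is amenable as a \emph{discrete} group, uniformly across the non-exceptional family and the three exceptional families, is the delicate point underlying the whole transfer, since a generic compact group of M\"obius transformations is not amenable when regarded discretely.
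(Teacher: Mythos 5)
There is a genuine gap at the heart of your non-exceptional case. You deduce from right amenability of $S$ that $S$ is \emph{nearly abelian}, with commutator identities $f_i\circ f_j=\sigma_{ij}\circ f_j\circ f_i$ for M\"obius $\sigma_{ij}$, and the entire transfer to $G$ (inverting the relations, trapping $[G,G]$ in an amenable symmetry group $\Sigma_0$) rests on those identities. But the equivalence ``right amenable $\Rightarrow$ nearly abelian'' is a \emph{polynomial} phenomenon (Theorem \ref{th.Several}); the paper explicitly warns that it fails for rational maps, citing Ye's example of two rational maps sharing the measure of maximal entropy that do not generate a nearly abelian semigroup. Since the elements of $\mathfrak{RS}$ are genuinely rational (degree-$d$ endomorphisms of $\overline{\C}$ fixing $0$, not polynomials), there is no ``rational counterpart of Theorem \ref{th.Several}'' to invoke, and the group $\Sigma$ your argument needs simply may not exist. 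A secondary weakness, even granting $\Sigma$: to conclude $[G,G]\subseteq\Sigma_0$ you must conjugate the $\sigma_{jk}$ by $f_i^{-1}$ as well as by $f_i$, and the semiconjugacy $f_i\circ\sigma=\sigma'\circ f_i$ only controls one direction unless $\sigma\mapsto\sigma'$ is shown to be surjective.

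The paper's route avoids commutator structure entirely. From Theorem \ref{th.AmenabilityI} it gets that $S$ (the semigroup of all rational elements of $G$) is right amenable; since $S$ is cancellative, amenability forces reversibility (the Ore condition), so by Ore's theorem $S$ embeds into its group of fractions $\mathcal{F}(S)$, which is amenable precisely because the semigroup is, and which is isomorphic to $G$ by the uniqueness of the group of fractions of a reversible subsemigroup generating a group. This is where the passage from $S$ to the transcendental inverses $f_i^{-1}$ is actually accomplished, with no need for any algebraic identities among the generators. Your treatment of the degree-one case matches the paper's, and your exceptional case is in the right spirit (the paper shows the conjugated semigroup consists of power maps $z^k$ fixing $1$, hence is abelian), but as written the non-exceptional case does not go through.
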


In the exceptional case, the reciprocal of the Theorem  \ref{th.polysemi} is false. Indeed,
the semigroup of affine maps $S=\langle 2z,2z+1\rangle$ is free (see \cite{CMAmenabilityII}),
but the group $G$ generated by $S$ is amenable as a subgroup of $\mathrm{Aff}(\C)$. Thence 
$\tilde{G}=\frac{1}{z}\circ G \circ\frac{1}{z}\subset \Gamma$ is a rational amenable group containing 
a non-cyclic free rational subsemigroup $\tilde{S}=\frac{1}{z}\circ S \circ\frac{1}{z}$. 

\medskip 
\noindent \textbf{Remark.} In the non-exceptional case, the reciprocal of 
Theorem \ref{th.polysemi} is an interesting open question.  
\medskip

The following corollary can be regarded as a version of the classical 
``Tits alternative'' in this setting.

\begin{corollary}\label{cor.titsalternative}
Let $G$ be a group as in Theorem \ref{th.polysemi} with a non-injective rational element, 
then $G$ is either virtually cyclic or abelian. 
\end{corollary}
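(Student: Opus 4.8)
The plan is to deduce the statement from the amenability furnished by Theorem \ref{th.polysemi} together with the rigidity of the non-injective element. First I would record that, by Theorem \ref{th.polysemi}, the group $G$ is amenable, and that the rational elements of $G$ form a subsemigroup $S=G\cap\mathfrak{RS}$ containing the given non-injective element $g$ of degree $\ge 2$ fixing the origin with $g'(0)\ne 0$; note that the non-injectivity hypothesis is exactly what rules out the affine counterexample $\frac{1}{z}\circ\langle 2z,2z+1\rangle\circ\frac{1}{z}$ discussed above, since there every element is M\"obius. I would then split the argument according to whether $S$ contains a non-exceptional element or consists entirely of exceptional maps.

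\textbf{Non-exceptional case.} Here $S$ contains a non-exceptional element and no non-cyclic free subsemigroup, so Theorem \ref{th.AmenabilityI} applies and produces a probability measure $\mu$ which is the measure of maximal entropy of every non-injective element of $S$. The key consequence I would extract is that every M\"obius element $h\in G$ must preserve $\mu$: indeed $hgh^{-1}$ is again a non-injective rational element of $S$, so it shares $\mu$ with $g$, while its measure of maximal entropy is $h_{*}\mu$; hence $h_{*}\mu=\mu$. Since $\mu$ is the maximal measure of a non-exceptional map, its group of M\"obius symmetries is finite, so the subgroup $F\subset G$ of M\"obius (degree one) elements is finite. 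Next I would produce an abelian normal subgroup of finite index. Using the multiplier homomorphism $\lambda\colon\Gamma\to\C^{*}$, $\lambda(s)=s'(0)$, the image $\lambda(G)$ is abelian; when $\lambda(g)$ is not a root of unity I would linearize $g$ to $\lambda(g)\,z$ by its formal K\"onig coordinate and observe that the formal centralizer of a linear germ $\mu z$ in $\Gamma$ is exactly the abelian group $\{cz:c\in\C^{*}\}$. Combining the rigidity of maps sharing $\mu$ with this computation, the non-injective rational elements become linear in the K\"onig coordinate, and together with their formal inverses they generate an abelian subgroup $A$ identified with a subgroup of $\C^{*}$. Because $F$ permutes the non-injective elements among themselves it normalizes $A$, so $A$ is normal of finite index; if $A=G$ then $G$ is abelian, and otherwise I would argue that a non-abelian such $G$ forces the linearizable core $A$ to have rank one, whence $G$ is virtually cyclic.

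\textbf{Exceptional case.} When every non-injective element is exceptional I would argue directly from the explicit normal forms: a power, Chebyshev or Latt\`es model fixing the origin with nonzero multiplier carries a commuting/renormalization structure governed by a multiplicative or elliptic parameter, so the absence of non-cyclic free subsemigroups forces $G$ into the corresponding virtually abelian model, which is abelian (as for the $\langle z^{a},z^{b}\rangle$-type groups, once transported to $\Gamma$ so that the fixed point has nonzero multiplier) or virtually cyclic once a nontrivial finite symmetry is present.

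\textbf{Main obstacles.} The hard part will be twofold. First, transferring the sphere-dynamical information of Theorem \ref{th.AmenabilityI}, which lives on the rational subsemigroup $S$, to the whole group $G$, whose remaining elements are the \emph{non-rational} formal inverses of the non-injective maps and act only as local branches at the origin; one must propagate the invariance of $\mu$ and the multiplier data across these inverses. Second, the parabolic case in which $\lambda(g)$ is a root of unity, where the K\"onig linearization is unavailable and one must instead use the centralizer theory of parabolic germs in $\Gamma_1$. Controlling $\ker\lambda=G\cap\Gamma_1$ in that situation, and pinning down precisely when the abelian core is cyclic so that the dichotomy \emph{virtually cyclic versus abelian} is sharp rather than merely virtually abelian, is where I expect the principal difficulty to lie.
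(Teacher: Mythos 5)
Your proposal has genuine gaps at exactly the points that carry the weight of the statement. The paper's own proof is a short reduction: from the proof of Theorem \ref{th.polysemi}, the semigroup $S$ of rational elements of $G$ is either abelian (the exceptional case, where $S$ is shown to consist of power maps in a single coordinate, so that $G\cong\mathcal{F}(S)$ is abelian) or a right-amenable cancellative semigroup of non-exceptional rational maps; in the latter case Theorem 1 of \cite{CMAmenability} says the group of right fractions of such a semigroup is virtually cyclic, and Ore's uniqueness gives $G\cong\mathcal{F}(S)$. You never invoke the fractions mechanism, and this is not cosmetic: it is the only tool in the paper for transferring dynamical information about the rational subsemigroup $S$ to the whole group $G$, whose remaining elements are non-rational formal inverses. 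You flag this as an ``obstacle'' but do not resolve it.

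Concretely, two steps of your non-exceptional argument do not go through as written. First, the claim that the non-injective elements ``become linear in the K\"onig coordinate'' and ``generate an abelian subgroup $A$ identified with a subgroup of $\C^{*}$'' conflates individual formal linearizability with the existence of a common linearizing coordinate; two degree-$\ge 2$ maps each formally conjugate to a linear germ at $0$ need not commute, and sharing the maximal measure does not by itself supply a common conjugacy. Second, even granting an abelian finite-index subgroup $A$, the assertion that non-abelianness ``forces $A$ to have rank one'' is unsupported: ruling out a free abelian core of rank two is precisely where Proposition \ref{pr.ErLevRitt}(2) (commuting non-exceptional maps share an iterate) or, equivalently, the cited Theorem 1 of \cite{CMAmenability} must enter, and your sketch uses neither. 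The exceptional case is likewise only asserted; the paper's proof there depends on the normal-form analysis already carried out in the proof of Theorem \ref{th.polysemi} (all non-identity elements of $\tilde S$ are power maps $z^{k}$ fixing $1$), which gives abelianness of $S$ and hence of $G=\mathcal{F}(S)$. The correct measure-symmetry observation you make (finiteness of the M\"obius stabilizer of the maximal measure) is a reasonable ingredient but does not by itself close either case.
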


Applying Theorem \ref{th.AmenabilityI}, we obtain the following 
immediate corollary.

\begin{corollary}\label{cor.polysemi}
A rational group $G\subset \Gamma$ is amenable whenever 
every rational non-cyclic subsemigroup in $G$ is right amenable.
\end{corollary}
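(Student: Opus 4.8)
The plan is to derive the corollary from Theorem~\ref{th.polysemi}. That theorem makes $G$ amenable as soon as $G$ contains no non-cyclic free rational subsemigroup, so it suffices to show that the standing hypothesis---that every rational non-cyclic subsemigroup of $G$ be right amenable---rules out such a subsemigroup. I would argue by contradiction. Suppose $G$ contains a non-cyclic free rational subsemigroup $S$. Then $S$ is generated by rational elements and is non-cyclic, hence it is exactly one of the subsemigroups to which the hypothesis applies; consequently $S$ is right amenable. The whole task reduces to showing that a free non-cyclic semigroup cannot be right amenable, which contradicts the previous sentence and forces the nonexistence of $S$.

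To contradict the freeness of $S$ the natural tool is Theorem~\ref{th.AmenabilityI}, applied to $S$ regarded as a semigroup of rational endomorphisms of $\overline{\C}$ fixing the origin. If $S$ contains a non-exceptional element, the equivalence of items~(1) and~(2) there converts the right amenability of $S$ into the assertion that $S$ has no non-cyclic free subsemigroup; since $S$ is itself non-cyclic and free, this is the sought contradiction, and Theorem~\ref{th.polysemi} then yields the amenability of $G$.

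The hard part---and the reason the argument is not literally a one-line application of Theorem~\ref{th.AmenabilityI}---is the hypothesis in that theorem that $S$ contain a non-exceptional element; a priori $S$ could consist entirely of exceptional maps. I would dispose of this case by a purely algebraic observation that needs no dynamics: a free semigroup of rank $k\geq 2$ is never right amenable. Indeed, for free generators $a_1,\dots,a_k$ let $\lambda_i$ be the indicator of the words ending in $a_i$; appending $a_i$ on the right sends every word to a word ending in $a_i$, so the right translate $\rho_{a_i}\lambda_i$ is the constant function $\mathbf{1}$. Any right-invariant mean $m$ (with $m(\mathbf{1})=1$) would then satisfy $m(\lambda_i)=m(\rho_{a_i}\lambda_i)=1$ for every $i$, whereas $\sum_i\lambda_i=\mathbf{1}$ forces $\sum_i m(\lambda_i)=1$; for $k\geq 2$ these are incompatible. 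Applying this to $S$ settles the exceptional case uniformly, and in fact makes the contradiction independent of whether $S$ meets the hypotheses of Theorem~\ref{th.AmenabilityI}. I would nonetheless keep Theorem~\ref{th.AmenabilityI} in the write-up, since it is the route suggested by the surrounding results and ties the algebraic statement to the dynamical picture, using the abstract argument only to absorb exceptional generators, where the dynamical classification would otherwise have to be invoked.
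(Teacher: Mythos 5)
Your proof is correct, and its overall route coincides with the paper's: the paper disposes of this corollary in one line, by noting that the hypothesis together with Theorem~\ref{th.AmenabilityI} (right amenable $\Rightarrow$ no non-cyclic free subsemigroup) excludes non-cyclic free rational subsemigroups of $G$, after which Theorem~\ref{th.polysemi} applies. Where you genuinely add something is in the step closing the gap: you correctly observe that Theorem~\ref{th.AmenabilityI} carries the hypothesis of a non-exceptional element, which a putative free rational subsemigroup $S$ need not satisfy, and you replace the dynamical input by the elementary combinatorial fact that a free semigroup of rank $k\ge 2$ admits no right-invariant mean (the indicator functions $\lambda_i$ of words ending in $a_i$ satisfy $r_{a_i}^{-1}(\mathrm{supp}\,\lambda_i)=S$, forcing $m(\lambda_i)=1$ for all $i$ while $\sum_i\lambda_i=\mathbf{1}$). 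This argument is standard but exactly what is needed; it is more self-contained than the paper's citation, it covers the exceptional case uniformly, and it uses only that $S$ itself is right amenable rather than the stronger heredity statement implicit in invoking Theorem~\ref{th.AmenabilityI}. The trade-off is that the paper's phrasing keeps the corollary visibly tied to the dynamical equivalences of Theorem~\ref{th.AmenabilityI}, whereas yours isolates the purely algebraic content; both are valid, and yours is arguably the more complete write-up.
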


It would be interesting to have an analogue of Theorem \ref{th.polysemi} 
for either groups or semigroups generated by elements in $\mathfrak{H}.$

In what follows, we formulate a series of results of dynamical type which 
provides, in particular,  a partial answer to Baker's conjecture.

\begin{theorem}\label{th.superatracting}
 Let $f,g$ be a pair of non-injective entire maps sharing a periodic 
 point $z_0$ which is superattracting for $g$, then $f$ and $g$ share 
 the set of preperiodic points whenever the semigroup $\langle f, g \rangle$
 contains neither free nor free abelian non-cyclic subsemigroups. 
\end{theorem}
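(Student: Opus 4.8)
The plan is to pass to a common fixed point and then read the dynamics in the B\"ottcher coordinate of $g$. First I would conjugate so that $z_0=0$ and replace $f,g$ by $f^{q},g^{p}$, where $p,q$ are the periods of $0$ under $g,f$; since $Prep(f^{q})=Prep(f)$ and $Prep(g^{p})=Prep(g)$ and $\langle f^{q},g^{p}\rangle\subseteq\langle f,g\rangle$ inherits the two non-existence hypotheses, nothing is lost, and now $0$ is a common fixed point, superattracting for $g$, say $g(z)=b_mz^{m}+\cdots$ with $m\ge2$. After discarding the trivial case in which $\langle f,g\rangle$ is cyclic, let $\gamma=\gamma_g\in\Gamma_1$ be the B\"ottcher coordinate of $g$, which converges near $0$ because $g$ is entire, so that $\gamma\circ g\circ\gamma^{-1}=z^{m}$ as holomorphic germs at $0$.

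I would treat first the main case, in which $0$ is superattracting for $f$ as well, i.e. $f\in\mathfrak{S}_2$. Then Proposition \ref{pr.Bottcher} applies to $\langle g,f\rangle$, and since $\langle f,g\rangle$ contains no non-cyclic free subsemigroup the free alternative is excluded, leaving $\gamma\circ f\circ\gamma^{-1}=\omega z^{n}$ for some root of unity $\omega$ (of order $N$, say) and some $n\ge2$. In the $\gamma$-coordinate the semigroup is therefore $\langle z^{m},\omega z^{n}\rangle$, whose every element has the form $\zeta z^{e}$ with $\zeta\in\mu_N$ and $e$ in the multiplicative semigroup generated by $m,n$, the composition law being $(\zeta_1z^{e_1})\circ(\zeta_2z^{e_2})=\zeta_1\zeta_2^{\,e_1}z^{\,e_1e_2}$. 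If $m,n$ were multiplicatively independent I could choose suitable iterates of $f$ and $g$ that commute (the finitely many rotation constraints are met because $\mu_N$ is finite) and whose multiplicatively independent degrees make them generate a non-cyclic free abelian subsemigroup, contradicting the second hypothesis. Hence $m$ and $n$ are multiplicatively dependent, the degree part collapses to a cyclic semigroup, and $\langle z^{m},\omega z^{n}\rangle$ is virtually cyclic.

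In this virtually cyclic regime relations abound, and the crux is to produce two families of them of orbit-capturing type. Using that the degrees are commensurable and the rotations range over the finite group $\mu_N$, a routine congruence computation --- always solvable thanks to the freedom in taking large exponents --- gives, for a fixed $c$ and indices $k_i\to\infty$, germ identities $f^{c}\circ g^{l_i}=f^{k_i}$ and, symmetrically, $g^{c'}\circ f^{l_i'}=g^{k_i'}$. Because $f$ and $g$ are entire and the two sides of each identity are entire maps agreeing on a neighbourhood of $0$, the identity principle upgrades each to a global identity on $\C$. Orbit-capture then closes the argument: if $w\in Prep(g)$ then every $g^{l_i}(w)$ lies in the finite set $\mathcal{O}_g(w)$, so $f^{k_i}(w)=f^{c}(g^{l_i}(w))\in f^{c}(\mathcal{O}_g(w))$ is finite while $k_i\to\infty$, whence the forward $f$-orbit of $w$ is finite and $w\in Prep(f)$; the symmetric family yields $Prep(f)\subseteq Prep(g)$, so $Prep(f)=Prep(g)$.

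The main obstacle is the remaining case, where $0$ is a fixed point of $f$ that is not superattracting, so that $f\in\Gamma$ with $f'(0)=\lambda\ne0$. Here Proposition \ref{pr.Bottcher} genuinely fails (as the excerpt records for elements of $\mathfrak{S}\setminus\mathfrak{S}_2$), and the orbit-capture mechanism collapses: in the $\gamma$-coordinate $\gamma\circ f\circ\gamma^{-1}=\lambda z+\cdots$ is invertible at $0$ and can never satisfy an identity $g^{c'}\circ f^{l}=g^{k}$ with $k>c'$ against $z^{m}$; moreover comparing multipliers at $0$ rules out a common iterate (it would force $\lambda^{i}=0$), after which the no-free-abelian hypothesis rules out that $f$ and $g$ commute. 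I would dispose of this case by proving that it cannot occur under the hypotheses: the clash between the locally invertible germ of $f$ and the $m$-to-one germ of $g$ should be exploited, through a ping-pong or expansion argument near the superattracting point of $g$, to exhibit a non-cyclic free subsemigroup of $\langle f,g\rangle$. Making this free-versus-rigid dichotomy precise for the mixed degree-one/degree-$m$ situation is the most delicate part of the whole proof.
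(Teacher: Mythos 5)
Your treatment of the main case (where $z_0$ is superattracting for $f$ as well) follows essentially the paper's route: the paper invokes Lemma \ref{lem.forth6} to reduce, via Proposition \ref{pr.Bottcher}, to $\langle z^m,\omega z^n\rangle$, uses the no-free-abelian hypothesis to extract the relations $f^k\circ g^l=f^{2k}$ and $g^l\circ f^k=g^{2l}$, and then Lemma \ref{lm.preperiodic} converts these into equality of the preperiodic sets by the same semiconjugation/orbit-capture mechanism you describe. Your congruence argument for producing commuting iterates when $m$ and $n$ are multiplicatively independent is stated loosely (when $\gcd(m,N)>1$ one cannot simply solve $m^a\equiv 1 \pmod N$; the paper's Lemma \ref{lem.forth6} instead splits the rotation group as $Deck(f^r)\times Aut(f^s)$), but that is a repairable imprecision rather than a structural flaw.

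The genuine gap is the second case, where $0$ is fixed but not superattracting for $f$, so $f^k(z)=\lambda z+\cdots$ with $\lambda\neq 0$. You correctly observe that Proposition \ref{pr.Bottcher} does not apply to $f^k$ and that orbit capture breaks down, but you then leave the case unresolved, offering only the hope that a ``ping-pong or expansion argument'' would exhibit a free subsemigroup, and you yourself flag this as the most delicate point. No ping-pong is needed; the paper disposes of the case with a short composition trick that your proposal is missing. The composition $f^k\circ g^l$ has germ $\lambda z^n+\cdots$ with $n\geq 2$, hence lies in $\mathfrak{S}_2$, so Proposition \ref{pr.Bottcher} applies to the subsemigroup $\langle f^k\circ g^l,\, g^l\rangle$ of $\langle f,g\rangle$. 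The no-free hypothesis forces $\gamma_g\circ (f^k\circ g^l)\circ\gamma_g^{-1}=wz^n$ with $w$ a root of unity; since $\gamma_g\circ g^l\circ\gamma_g^{-1}=z^n$, right cancellation of $z^n$ yields $\gamma_g\circ f^k\circ\gamma_g^{-1}=wz$, whence $f^{kN}=\mathrm{id}$ as a germ at $0$ and therefore globally by the identity principle, contradicting the non-injectivity of $f$. Without this step (or an actual substitute for it), your proof is incomplete.
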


An inductive argument immediately gives the following corollary.

\begin{corollary} Let $f$ and $g$ be as in Theorem \ref{th.superatracting}, 
then every $h\in \langle f, g \rangle$ shares the Julia set with $g$.
\end{corollary}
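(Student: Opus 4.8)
The plan is to leverage Theorem \ref{th.superatracting} together with an inductive propagation of the shared-preperiodic-points property through the semigroup $\langle f, g\rangle$. First I would observe that the corollary asks to show that every $h \in \langle f, g\rangle$ shares the Julia set with $g$, and the natural route is to first upgrade the conclusion of Theorem \ref{th.superatracting} from ``$f$ and $g$ share preperiodic points'' to ``every generator, and hence every element, shares preperiodic points with $g$,'' and then to pass from shared preperiodic points to shared Julia set. The key point is that under the standing hypotheses the semigroup $S = \langle f, g\rangle$ still contains neither free nor free abelian non-cyclic subsemigroups, and this property is inherited by any subsemigroup; in particular it is inherited by $\langle h, g\rangle$ for any word $h$ in $f$ and $g$.

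The inductive argument I would set up proceeds on the word length of $h$ in the generators $f$ and $g$. The base case is exactly Theorem \ref{th.superatracting}: both generators $f$ and $g$ share the set of preperiodic points, and trivially $g$ shares it with itself. For the inductive step, suppose $h = h' \circ k$ where $k \in \{f, g\}$ is a generator and $h'$ is a shorter word already known to share the set of preperiodic points with $g$. The crucial observation is that $\mathrm{Prep}(g)$ is completely invariant under any map that shares preperiodic points with $g$, and preperiodicity is preserved under composition: if a point is preperiodic for a map sharing $\mathrm{Prep}(g)$, its image remains in $\mathrm{Prep}(g)$. Hence $h$ also shares a superattracting periodic point with $g$ (namely the orbit of $z_0$, which lies in the common preperiodic set), and $\langle h, g\rangle$ inherits the absence of free and free abelian non-cyclic subsemigroups. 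Applying Theorem \ref{th.superatracting} to the pair $(h, g)$ then yields $\mathrm{Prep}(h) = \mathrm{Prep}(g)$, closing the induction.

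Finally, to convert shared preperiodic points into shared Julia sets, I would invoke the standard fact that for non-injective entire maps the Julia set is the closure of the repelling periodic points, and more generally the closure of the set of preperiodic points (with at most finitely many exceptional points) coincides with the Julia set. Since $h$ and $g$ are non-injective entire maps — non-injectivity of $h$ follows because it is a composition involving the non-injective $g$ or $f$ — the equality $\mathrm{Prep}(h) = \mathrm{Prep}(g)$ forces $\overline{\mathrm{Prep}(h)} = \overline{\mathrm{Prep}(g)}$, and taking closures gives $J(h) = J(g)$.

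The step I expect to be the main obstacle is the inductive propagation itself, specifically verifying that $\langle h, g\rangle$ genuinely satisfies the hypotheses of Theorem \ref{th.superatracting} at each stage. Two subtleties must be handled carefully: first, that $h$ shares a genuinely superattracting periodic point with $g$ rather than merely a preperiodic point (one must locate a point in the common preperiodic set that is actually periodic and superattracting for $g$, which the shared orbit of $z_0$ provides); and second, that the no-free-subsemigroup conditions truly descend to the two-generator subsemigroup $\langle h, g\rangle$ — this is immediate for subsemigroups, but one should confirm that $h$ and $g$ do not accidentally generate a cyclic semigroup in a degenerate way that would make Theorem \ref{th.superatracting} vacuous rather than applicable. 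Once these hypothesis checks are secured, the remainder is a routine closure argument.
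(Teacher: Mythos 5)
The paper offers no written proof of this corollary beyond the phrase ``an inductive argument immediately gives,'' so the only meaningful comparison is whether your induction actually closes; it does not. The gap sits exactly where you yourself predicted the main obstacle would be: the verification that the pair $(h,g)$ satisfies the hypotheses of Theorem \ref{th.superatracting}. That theorem requires $h$ and $g$ to share a \emph{periodic} point which is superattracting for $g$, and your justification --- that the orbit of $z_0$ lies in the common preperiodic set $\mathrm{Prep}(h')=\mathrm{Prep}(g)$ --- does not produce such a point. Knowing that $z_0$ is preperiodic for $h'$ and preperiodic for the generator $k$ gives no control over its orbit under the composition $h=h'\circ k$: preperiodicity is not preserved under composing two maps that merely share their preperiodic sets, and even if $z_0$ were preperiodic for $h$, the theorem needs a point that is genuinely \emph{periodic} for $h$. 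Concretely, if $z_0$ has period $k>1$ for $f$ and period $l>1$ for $g$, the point $g(z_0)$ need not lie on the $f$-cycle of $z_0$, so the forward orbit of $z_0$ under a mixed word such as $f\circ g$ is a priori infinite; nothing in your argument rules this out. The inductive step is therefore unproved, and with it the whole induction.

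What the proof of Theorem \ref{th.superatracting} actually supplies, and what any honest induction must exploit, are the global identities $f^{K}\circ g^{L}=f^{2K}$ and $g^{L}\circ f^{K}=g^{2L}$ coming from Lemma \ref{lem.forth6} (the Levin relations): these collapse every word in $f^{K}$ and $g^{L}$ to a single iterate of its leftmost letter, which then trivially shares the Julia set with $g$. Your write-up never invokes these relations, and without them (or some substitute, such as a common completely invariant compact set or a common invariant measure, neither of which is available for transcendental entire maps) there is no mechanism for passing from the two generators to an arbitrary word $h$. A secondary, more cosmetic point: the final passage from $\mathrm{Prep}(h)=\mathrm{Prep}(g)$ to $J(h)=J(g)$ is sound in substance, but your parenthetical ``at most finitely many exceptional points'' is inaccurate --- the preperiodic points lying in the Fatou set (for instance all preimages of the superattracting cycle) can form an infinite set; the correct argument discards a countable set and uses that the Julia set of a non-injective entire map is perfect and is the closure of its repelling periodic points.
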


When the common periodic point $z_0$ is not superattracting for one of the maps, we have 
the following theorem. 

\begin{theorem}\label{th.sharingiteration}
Let $f,g \in  \mathfrak{S}$ such that $f\in \Gamma$ is an element of infinite order and $g\neq f^{-n}$ for $n\in \mathbb{N}$.
Then, the following are equivalent: 

\begin{enumerate}
 \item $f$ shares an iteration with $g$.
 \item The semigroup $\langle f, g \rangle$ is 
    abelian, but not free abelian. 
  \item $\langle f, g \rangle$ is abelian and contains  no free abelian non-cyclic subsemigroup. 
    \end{enumerate}
\end{theorem}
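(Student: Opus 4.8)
The plan is to reduce the whole statement to the structure of the centralizer $C(f)=\{u\in\mathfrak{S}: u\circ f=f\circ u\}$ and then argue combinatorially. Writing $\lambda=a_1(f)$, I would split into the two regimes supplied by the formal classification of conjugacy classes (Proposition \ref{pr.SeriesLineariz}). If $\lambda$ is not a root of unity, then $f$ is formally linearizable, $\gamma\circ f\circ\gamma^{-1}=\lambda z$ for some $\gamma\in\Gamma_1$, and a direct coefficient comparison shows that $u=\sum_k c_k z^k$ commutes with $\lambda z$ iff $c_k(\lambda^k-\lambda)=0$ for all $k$, forcing $u=c_1 z$; hence $C(f)=\gamma^{-1}\{cz: c\in\C^{*}\}\gamma\cong\C^{*}$. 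If $\lambda$ is a root of unity, then since $f$ has infinite order a suitable power $f^q$ is tangent to the identity and nontrivial, and passing to the formal infinitesimal generator $X=\log f$ I would identify the elements commuting with $f$ with the formal symmetries of $X$, which form the one–parameter flow $\{f^{t}: t\in\C\}$ together with a finite group of linear symmetries. Either way I record the Centralizer Lemma: for $h\in\Gamma$ of infinite order $C(h)$ is abelian and its torsion–free part has rank one (a subgroup of $\C^{*}$ or of $(\C,+)$).

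Granting this lemma, the equivalence $(2)\Leftrightarrow(3)$ is purely algebraic. A two–generated abelian semigroup $\langle f,g\rangle$ is free abelian and non–cyclic exactly when the natural map $\mathbb{N}^2\to\langle f,g\rangle$, $(i,j)\mapsto f^i g^j$, is injective, equivalently when the generated group $G=\langle f,g\rangle_{\mathrm{grp}}$ is isomorphic to $\Z^2$. Thus ``$\langle f,g\rangle$ is not free abelian non-cyclic'' is equivalent to ``$G$ has torsion–free rank $\le 1$''. If $G$ has rank $\le 1$, then any subsemigroup $H\subseteq\langle f,g\rangle$ generates a subgroup of rank $\le 1$ and so cannot be free abelian non-cyclic, since such an $H$ would generate a copy of $\Z^2$; this gives $(2)\Rightarrow(3)$, while $(3)\Rightarrow(2)$ is immediate because $\langle f,g\rangle$ is a subsemigroup of itself.

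For $(1)\Rightarrow(2)$ I would start from $f^m=g^n$. Then $g$ commutes with $g^n=f^m$, and since $f^m$ has infinite order the Centralizer Lemma makes $C(f^m)$ abelian; as $f,g\in C(f^m)$ they commute and $\langle f,g\rangle\subseteq C(f^m)$ is abelian, while the relation $f^m=g^n$ sends the distinct points $(m,0)$ and $(0,n)$ to the same element, so $\langle f,g\rangle$ is not free abelian. For $(2)\Rightarrow(1)$ I would use that $G$ has rank $\le 1$ and contains $\langle f\rangle\cong\Z$, so $G/\langle f\rangle$ is torsion and some power satisfies $g^N=f^k$. The hypotheses that $f$ has infinite order and $g\neq f^{-n}$ are precisely what forces $k\neq 0$ and rules out the purely inverse configuration, so that $g^N=f^k$ is a genuine common iteration of $f$ and $g$; the boundary case $k=0$, i.e.\ $g$ of finite order commuting with the infinite–order $f$, is the one requiring separate, direct treatment.

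The hard part will be the Centralizer Lemma in the resonant case, when $\lambda$ is a root of unity: there the linear coefficient comparison is unavailable and one must work with the formal generator $X=\log f$ and verify that the formal centralizer of $f$ coincides with the flow of $X$ up to the finite linear symmetry group of the normal form, and in particular that it is abelian with rank–one torsion–free part. This is the analytic heart on which the clean dichotomy $(1)\Leftrightarrow(2)\Leftrightarrow(3)$ rests. The secondary delicate point is the sign/finite–order bookkeeping in $(2)\Rightarrow(1)$, where I expect the standing hypotheses ``$f$ of infinite order'' and ``$g\neq f^{-n}$'' to be exactly what is needed to upgrade the relation $g^N\in\langle f\rangle$ into an honest shared iteration rather than a trivial or inverse coincidence.
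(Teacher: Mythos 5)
Your forward direction rests on the same mechanism as the paper's: use the normal forms of Proposition \ref{pr.SeriesLineariz} to control everything commuting with a power of $f$, and deduce commutativity of $\langle f,g\rangle$ from a shared iterate. Packaging this as a Centralizer Lemma is a clean reorganization, and in the non-resonant case your coefficient computation is exactly what the paper does implicitly. But two things are wrong with the lemma as you state it. First, the resonant case --- which is where the paper spends most of its effort --- is not proved: you defer it to a formal-flow argument with $X=\log f$. The paper instead invokes item 3 of Proposition \ref{pr.SeriesLineariz} to put $f$ into the polynomial normal form $\lambda z+bz^n+cz^{2n-1}$ and then quotes polynomial results (Theorem \ref{th.Several}, Corollary 24 of \cite{CMAmenability}, Theorem 1 of \cite{AtelaHu}) to get $q=\tau p^l$; your route is plausible but is precisely the part you have not written, so the proof is incomplete exactly where the work lies. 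Second, the clause that the torsion-free part of $C(h)$ has rank one is false: after linearization $C(\lambda z)=\{cz: c\in\mathbb{C}^{*}\}\cong\mathbb{C}^{*}$, whose torsion-free rank is uncountable --- already $2z$ and $3z$ generate a free abelian non-cyclic subsemigroup of it. You never actually use this clause (only abelianness of the centralizer is load-bearing), but it should be deleted.

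The more serious gap is in $(2)\Rightarrow(1)$. From rank $\le 1$ you obtain $g^N=f^k$ with $k\in\mathbb{Z}$ and claim that the hypotheses ``$f$ of infinite order'' and ``$g\neq f^{-n}$'' force $k>0$. They do not: take $f=4z$ and $g=z/2$. Then $g\neq f^{-n}$ for every $n$, the semigroup $\langle f,g\rangle=\{2^{2i-j}z\}$ is abelian and not free abelian (indeed $f\circ g^2=\mathrm{id}$), yet $f^m=g^n$ has no solution in positive integers, so the relation $g^2=f^{-1}$ with $k<0$ survives all the stated hypotheses. The case $k=0$ ($g$ of finite order) you also leave open. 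To be fair, the paper dismisses $(3)\Rightarrow(2)\Rightarrow(1)$ as ``straightforward'' and gives no argument, so this defect is arguably inherited from the statement itself (it disappears if ``shares an iteration'' is read as $f^m=g^n$ with $m,n\in\mathbb{Z}\setminus\{0\}$ when $g$ is invertible); but your explicit claim that the hypotheses ``are precisely what is needed'' is incorrect, and this direction needs either a corrected reading of item (1) or a strengthened hypothesis. You should also say a word about why the ``generated group'' $G$ exists when $g\notin\Gamma$ (cancellativity of $\mathfrak{S}\setminus\{0\}$ and the Grothendieck group of an abelian cancellative semigroup), since the paper never passes to a group.
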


The latter theorem is an analogue of Eremenko-Ritt's theorem (Proposition \ref{pr.ErLevRitt} below)  
in the case of formal series. 

Now, we state the version of Theorem \ref{th.superatracting} 
where we consider a periodic component instead of a periodic point. 

\begin{theorem}\label{th.preperiodic}

 Let $f,g$ be a pair of non-injective entire maps  sharing a 
 bounded periodic Fatou component $D$, then $f$ and $g$ share 
 the set of preperiodic points whenever the semigroup $\langle f, g \rangle$
  contains neither free nor free abelian non-cyclic subsemigroups. 

\end{theorem}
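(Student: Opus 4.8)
The plan is to reduce Theorem~\ref{th.preperiodic} to its point version, Theorem~\ref{th.superatracting}, together with the formal-series machinery, by manufacturing a common fixed point of $f$ and $g$ out of the shared component $D$. First I would pass to the iterates that fix $D$: if $p,q$ are the periods of $D$ under $f,g$, set $F=f^{p}$ and $G=g^{q}$, so that $F(D)=D$ and $G(D)=D$. Since a point is preperiodic for a map precisely when it is preperiodic for any of its iterates, $Prep(F)=Prep(f)$ and $Prep(G)=Prep(g)$, so it suffices to prove $Prep(F)=Prep(G)$; moreover $\langle F,G\rangle\subseteq\langle f,g\rangle$, so the hypothesis that there are no non-cyclic free and no non-cyclic free abelian subsemigroups is inherited.

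Next I would uniformize. As $D$ is a bounded periodic Fatou component, Baker's theorem rules out multiple connectivity for non-wandering components, so $D$ is simply connected; a Riemann map $\varphi\colon D\to\mathbb{D}$ conjugates the return maps to holomorphic self-maps $\tilde F,\tilde G$ of the unit disk. The classification of bounded invariant Fatou components leaves only three types for $D$, namely attracting basin, parabolic basin, or Siegel disk, and correspondingly $\tilde F,\tilde G$ are either non-elliptic self-maps or elliptic automorphisms of $\mathbb{D}$. Applying the Denjoy--Wolff theorem, I would show that two maps with distinct Denjoy--Wolff points set up a ping-pong on $\mathbb{D}$ and hence generate a non-cyclic free subsemigroup inside $\langle F,G\rangle$, contradicting the hypothesis; therefore $\tilde F$ and $\tilde G$ share a Denjoy--Wolff point, which pulls back to a common periodic point $z_0\in\overline{D}$ fixed by both $F$ and $G$.

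With $z_0$ in hand, the identity theorem makes the germ at $z_0$ an injective semigroup homomorphism $\langle F,G\rangle\hookrightarrow\mathfrak{S}$ (after translating $z_0$ to the origin), under which the no-free and no-free-abelian hypotheses pass verbatim to the germ semigroup. I would then split according to the multiplier $\lambda$ at $z_0$. If $z_0\in D$ is superattracting for $g$, the germ of $G$ lies in $\mathfrak{S}_2$, the formal B\"ottcher coordinate is available, and Theorem~\ref{th.superatracting} applies directly, giving $Prep(f)=Prep(g)$. If $z_0\in D$ is attracting-but-not-superattracting, or is the center of a Siegel disk, the germs lie in $\Gamma$ and I would invoke the formal-series results of the previous subsection (including Theorem~\ref{th.sharingiteration} and Proposition~\ref{pr.SeriesLineariz}), working in Koenigs coordinates, to force a common iteration of the germs; by analytic continuation this promotes to an identity $F^{m}=G^{n}$ on all of $\C$, whence $Prep(F)=Prep(G)$ as before.

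The remaining and hardest case is $z_0\in\partial D\subset J(f)\cap J(g)$, i.e. $D$ a parabolic basin, where $z_0$ is a common parabolic fixed point with multiplier a root of unity; here the germ is not linearizable and I would replace the Koenigs coordinate by Fatou coordinates on the petals contained in $D$, using the parabolic formal normal form to again extract a shared iteration. I expect two steps to carry the real difficulty: first, consuming the algebraic hypotheses to convert ``distinct Denjoy--Wolff points'' and ``multiplicatively independent multipliers'' into genuine free, respectively free abelian, non-cyclic subsemigroups, which requires care with the ping-pong estimates and with the linear-part homomorphism $\Gamma\to\C^{*}$; and second, propagating a relation that holds only as germs at $z_0$ to a global identity on $\C$, the analytic-continuation step that plays here the role the B\"ottcher coordinate plays in Theorem~\ref{th.superatracting}.
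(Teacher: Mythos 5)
Your overall strategy --- uniformize $D$, force a common fixed point, then reduce to Theorem~\ref{th.superatracting} and the formal-series results --- diverges from the paper, which instead observes that the return maps $f^p|_D,g^q|_D$ are conjugate to \emph{finite-degree Blaschke products} and then runs the whole argument inside the rational-map amenability theory: in the non-exceptional case Theorem~\ref{th.AmenabilityI} plus Proposition~\ref{pr.ErLevRitt} give the Levin relations directly (no common fixed point is ever needed), and the exceptional case splits into power maps (Lemma~\ref{lem.forth6}) and irrational rotations (a bespoke hyperbolic-geometry lemma, Lemma~\ref{lm.TitsD}). Your route has several genuine gaps. First, the Denjoy--Wolff dichotomy you lean on simply does not apply to elliptic automorphisms of $\mathbb{D}$, which is exactly the Siegel-disk case: two irrational rotations with distinct centers have no Denjoy--Wolff points and no common fixed point, so your reduction to a common periodic point fails there. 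This is precisely where the paper has to do real work, proving via reflections and the Klein combination theorem that two irrational rotations with distinct centers generate a non-cyclic free subsemigroup; nothing in your outline supplies that argument. Second, for non-injective Blaschke products with distinct Denjoy--Wolff points, the claim that a ``ping-pong'' produces a free subsemigroup is not a routine estimate: it is essentially the hard implication of Theorem~\ref{th.AmenabilityI} (no free non-cyclic subsemigroup $\Rightarrow$ shared measure of maximal entropy) specialized to the disk, and you would either have to invoke that theorem (as the paper does) or reprove it.

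Third, your use of Theorem~\ref{th.sharingiteration} in the attracting-but-not-superattracting and Siegel cases is circular as stated: that theorem is an equivalence whose items (2) and (3) already assume the semigroup is abelian, so it cannot by itself convert ``no free and no free abelian non-cyclic subsemigroups'' into a shared iteration --- abelianness must be established first, which is again the content of Lemma~\ref{lm.TitsD} (or, in the attracting case, of the non-exceptional rational machinery). Fourth, when the putative common Denjoy--Wolff point lies on $\partial\mathbb{D}$ (the parabolic case), pulling it back to a point of $\partial D\subset\C$ requires boundary behaviour of the Riemann map that need not be available for entire maps, whose bounded Fatou components can have non--locally connected boundary; your proposed Fatou-coordinate argument is left entirely schematic. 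In the paper this case never arises as a separate difficulty because a Blaschke product of degree at least two with a parabolic boundary fixed point is non-exceptional, so it is absorbed by the Theorem~\ref{th.AmenabilityI} branch.
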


As the following corollary shows, when $D$ is an  attracting, but
not superattracting component for an element of $S$, the condition 
of no free  non-cyclic  subsemigroups is enough to  share 
the set of preperiodic points. 

\begin{corollary}\label{cor.boundeddomain}
Let $f$ and $g$ be as in Theorem \ref{th.preperiodic} and assume  
$D$ is an attracting, but not superattracting, Fatou component for $g$. 
If $\langle f,g \rangle$  contains no free non-cyclic 
subsemigroups, then  $f$ 
shares the set of preperiodic points with $g$. 
\end{corollary}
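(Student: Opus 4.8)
The plan is to obtain the corollary from Theorem~\ref{th.preperiodic} by showing that, when $D$ is attracting but not superattracting for $g$, the single hypothesis that $\langle f,g\rangle$ contains no free non-cyclic subsemigroup already forces the absence of free abelian non-cyclic subsemigroups. Once both obstructions are excluded, Theorem~\ref{th.preperiodic} applies unchanged and yields $Prep(f)=Prep(g)$. So the entire task reduces to excluding the free abelian case, and this is where the non-superattracting geometry must do its work.

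First I would localize. Replacing $D$ by the member of its $g$-cycle containing the attracting periodic point $z_0$ and passing to the return map $G=g^p$, I have $G(z_0)=z_0$ with multiplier $\lambda=(g^p)'(z_0)$ satisfying $0<|\lambda|<1$; in a coordinate centred at $z_0$ the germ of $G$ is an element of $\Gamma$ of infinite order, so Koenigs' theorem provides a linearizer $\psi$ conjugating $G$ to $w\mapsto\lambda w$. Next, suppose toward a contradiction that $\langle f,g\rangle$ contains a free abelian non-cyclic subsemigroup with commuting generators $u,v$. Since $D$ is a bounded periodic Fatou component shared by all these maps, suitable return iterates $U,V$ of $u,v$ are non-injective holomorphic self-maps of the bounded domain $D$, hence strict hyperbolic contractions, each with a unique fixed point in $D$; because $U$ and $V$ commute they share that fixed point, and the non-superattracting hypothesis is what guarantees we are in the Koenigs (linear) regime rather than the Böttcher (power) regime that produces genuine free abelian behaviour as in Example~\ref{ex.SameJulia}. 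A holomorphic germ fixing $0$ and commuting with $w\mapsto\lambda w$ (with $0<|\lambda|<1$) must be linear, so in the coordinate $\psi$ every such germ is a multiplication $w\mapsto\mu w$.

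With $U,V$ simultaneously linearized to multiplications, I would invoke Theorem~\ref{th.sharingiteration}, applied to these germs in $\Gamma$ with $G$ of infinite order: either $U$ and $V$ share an iteration, in which case the subsemigroup is cyclic---contradicting that it is free abelian non-cyclic---or their multipliers are multiplicatively independent. In the latter case the linearizer $\psi$ carries the immediate basin $D$ onto a domain invariant under multiplication by both multipliers, hence invariant under the non-discrete multiplicative semigroup they generate, and I would derive a contradiction with the boundedness of $D$ from the fact that such a domain cannot be the image of a bounded immediate basin of a non-superattracting fixed point. Either alternative is impossible, so no free abelian non-cyclic subsemigroup can occur, and Theorem~\ref{th.preperiodic} then completes the proof; note also that if the first alternative ever did apply to $f,g$ themselves one would get $f^{qa}=g^{pb}$ and hence $Prep(f)=Prep(f^{qa})=Prep(g^{pb})=Prep(g)$ directly.

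I expect the main obstacle to be ensuring the non-superattracting (Koenigs) regime at the relevant common fixed point: the hypothesis controls $g$ on $D$, but one must transfer this to the commuting contractions $U,V$ and rule out a vanishing multiplier there, so that the germs are truly linear rather than of power type. The secondary technical points are the bookkeeping of the periods $p,q$, the uniqueness-of-fixed-point argument for commuting contractions of the bounded component $D$, and making the final rigidity precise---controlling the linearizer on the possibly non-simply-connected, critical-point-containing immediate basin and turning multiplicative independence of the multipliers into a genuine contradiction with boundedness.
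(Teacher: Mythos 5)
Your strategy---reduce to Theorem \ref{th.preperiodic} by showing that the non-superattracting hypothesis lets you drop the ``no free abelian non-cyclic subsemigroup'' assumption---is not the paper's route, and as written it has genuine gaps. The obstacle you yourself flag at the end is fatal rather than technical: to exclude a free abelian non-cyclic subsemigroup you must handle an \emph{arbitrary} commuting pair $u,v\in\langle f,g\rangle$, and the hypothesis only controls the multiplier of $g$ on $D$. The return maps of $u$ and $v$ to $D$ may perfectly well have a common \emph{superattracting} fixed point, in which case you are in the B\"ottcher regime, where free abelian non-cyclic pairs genuinely occur (e.g.\ Blaschke products conjugate to $z^2$ and $z^3$), and your linearization argument never gets started. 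Moreover, even in the Koenigs regime your final contradiction fails: two linear contractions $w\mapsto\lambda w$ and $w\mapsto\mu w$ with multiplicatively independent multipliers generate a free abelian non-cyclic semigroup and preserve any round disk centred at $0$, so invariance of a bounded domain under both multiplications is no contradiction at all. (Smaller issues: arbitrary words in $f,g$ need not preserve the cycle of $D$, and the Koenigs coordinate of $g^p$ linearizes germs commuting with $g^p$, not germs that merely commute with each other.)

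The paper's proof avoids all of this by never trying to exclude free abelian subsemigroups. It conjugates $\langle f^p,g^q\rangle$ restricted to $D$ by a Riemann map to a semigroup of finite Blaschke products and observes that the attracting-but-not-superattracting hypothesis makes $B_g=h\circ g^q\circ h^{-1}$ a \emph{non-exceptional} rational map. The single hypothesis ``no free non-cyclic subsemigroup'' then feeds directly into Theorem \ref{th.AmenabilityI} (right amenability and a common measure of maximal entropy for all non-injective elements), Proposition \ref{pr.ErLevRitt} (Levin relations), and Lemma \ref{lm.preperiodic} (shared preperiodic points). If you want to salvage your reduction, the key observation you are missing is precisely this non-exceptionality of the return map: it is what does the work of the free-abelian exclusion, not a multiplier computation on commuting pairs.
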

 
Also, we have the following corollary. 

\begin{corollary}\label{cor.SiegelAbel}

Assume that two entire maps $f$ and $g$ share a common bounded invariant Fatou component $D$
which is a Siegel disk for both maps, then $\langle f, g \rangle$ is abelian.
\end{corollary}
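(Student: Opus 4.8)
The plan is to reduce the statement to showing that $f$ and $g$ have a common center in $D$, and then to exploit the rigidity of the Riemann map to produce a single linearizing coordinate in which both maps are rigid rotations. First I would record the local picture. By definition of a Siegel disk, $D$ is conformally a disk and carries a conformal map conjugating the dynamics to an irrational rotation; in particular each map has a unique fixed point in $D$ (its center), and since an irrational rotation has no finite orbit other than its fixed point, the only preperiodic point lying in $D$ is that center. Thus $\mathrm{Prep}(f)\cap D=\{z_0\}$ and $\mathrm{Prep}(g)\cap D=\{w_0\}$, where $z_0,w_0$ are the centers of the Siegel disk for $f$ and $g$, respectively. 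Note also that a map possessing a bounded Siegel disk cannot be affine, hence cannot be injective, so both maps are non-injective.

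The key step, and the one I expect to be the main obstacle, is to prove $z_0=w_0$. Since $D$ is a common bounded invariant (hence periodic) Fatou component of the two non-injective entire maps $f$ and $g$, Theorem \ref{th.preperiodic} applies: provided $\langle f,g\rangle$ contains neither free nor free abelian non-cyclic subsemigroups, $f$ and $g$ share the full set of preperiodic points, and intersecting with $D$ forces $\{z_0\}=\{w_0\}$, a common center $c:=z_0=w_0$. This is precisely where the no-free-subsemigroup condition is genuinely needed: if the centers were distinct, then passing to the induced action on the prime-end boundary circle of $D$, the map $f$ acts as a rigid rotation while $g$ acts as an elliptic M\"obius rotation about a different center, and two circle maps with distinct fixed centers generate a non-elementary group. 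Lifting a non-cyclic free subsemigroup of this boundary action through the (semigroup-surjective) boundary restriction would then produce a non-cyclic free subsemigroup in $\langle f,g\rangle$, which is exactly the Ghys--Margulis dichotomy invoked in the introduction. Verifying this boundary mechanism, and hence confirming that distinct centers are incompatible with the hypothesis, is the technical heart of the argument.

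Finally, with the common center $c$ in hand, I would fix the Riemann map $\phi\colon D\to \mathbb{D}$ normalized by $\phi(c)=0$, which is unique up to post-composition with a rotation. Any conformal linearizer of $f$ sending $c$ to $0$ differs from $\phi$ by a rotation, so conjugating by $\phi$ and using that rotations commute yields $\phi\circ f\circ\phi^{-1}=R_\alpha$ with $R_\alpha(w)=e^{2\pi i\alpha}w$; the identical argument for $g$ gives $\phi\circ g\circ\phi^{-1}=R_\beta$ in the \emph{same} coordinate $\phi$. Since $R_\alpha R_\beta=R_\beta R_\alpha$, we obtain $f\circ g=g\circ f$ on $D$. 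As $f$ and $g$ are entire, the two entire maps $f\circ g$ and $g\circ f$ agree on the open set $D$, so by the identity theorem they coincide on all of $\C$. Hence $\langle f,g\rangle$ is abelian, completing the proof.
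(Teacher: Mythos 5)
There is a genuine gap, and it sits exactly where you flagged the ``technical heart'' of your argument. Corollary \ref{cor.SiegelAbel} is \emph{unconditional}: its only hypotheses are that $D$ is a common bounded invariant Fatou component which is a Siegel disk for both maps. You invoke Theorem \ref{th.preperiodic} to force the two centers to coincide, but that theorem requires the additional hypothesis that $\langle f,g\rangle$ contains neither free nor free abelian non-cyclic subsemigroups, which you are not given. Your fallback mechanism does not repair this: showing that distinct centers would produce a non-cyclic free subsemigroup of $\langle f,g\rangle$ is not a contradiction of anything in the corollary's hypotheses, so at best you prove the weaker conditional statement ``if $\langle f,g\rangle$ has no free (abelian) non-cyclic subsemigroup, then it is abelian,'' not the corollary as stated.

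The missing idea is that the contradiction in the distinct-center case must come from the classification of bounded invariant Fatou components, not from freeness. This is what the paper does: conjugating by a Riemann map $h:D\to\mathbb{D}$, both $B_f=h\circ f\circ h^{-1}$ and $B_g=h\circ g\circ h^{-1}$ are irrational rotations of $\mathbb{D}$; if their centers differ, the reflection construction in the proof of Lemma \ref{lm.TitsD} produces exponents $s,r$ for which $B_f^s\circ B_g^r$ is a \emph{hyperbolic} element of $\mathrm{Aut}(\mathbb{D})$. Then $f^s\circ g^r$ is an entire map having $D$ as a bounded invariant Fatou component on which it acts injectively and without fixed points, which is impossible (a bounded invariant component of an entire map is an attracting or parabolic basin or a Siegel disk, none of which supports a fixed-point-free automorphism action). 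Hence the centers coincide, both restrictions are rotations about the same point in the same coordinate, they commute on $D$, and the identity theorem (as in your final step, which is correct) extends the commutativity to all of $\C$.
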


By Theorem \ref{th.Several}, two polynomials $f$ and $g$ sharing the set of preperiodic points 
generate a semigroup with no free nor free abelian non-cyclic subsemigroups.  The analogous 
statement for transcendental entire maps remains an open question. Nevertheless, we have the 
following theorem which serves as a reciprocal to Theorem \ref{th.preperiodic}.

\begin{theorem}\label{th.boundedFatou}
Let $f$ and $g$ be entire maps sharing an invariant bounded 
Fatou component $D$. Assume $D$ is an attracting component of $f$ and 
the maps $f$ and $g$ share the set of preperiodic points, then the semigroup 
$\langle f, g \rangle$ contains neither free nor free abelian non-cyclic subsemigroups.
\end{theorem}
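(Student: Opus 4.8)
The plan is to reduce the global dynamical hypotheses to a local algebraic statement at a common fixed point and then invoke Theorem \ref{th.sharingiteration}. We may assume $f$ (and $g$) non-injective, since an injective entire map is affine and the attracting case $f(z)=\lambda z+c$ with $|\lambda|<1$ forces $Prep(f)=\{p\}$, making the statement elementary. As $D$ is bounded and $f$-invariant with $f$ attracting on it, $D$ contains a unique attracting fixed point $p$ with multiplier $\lambda=f'(p)$, $0<|\lambda|<1$ (the superattracting case $\lambda=0$ is handled by the B\"ottcher coordinate as in Theorem \ref{th.superatracting}). Since $p$ is $f$-fixed it is $f$-preperiodic, hence $g$-preperiodic by $Prep(f)=Prep(g)$; as $D$ is $g$-invariant its finite forward $g$-orbit lands on a $g$-cycle contained in $D$, and invariance of $D$ forces this cycle to be a single interior fixed point $q$, necessarily non-repelling because it lies in the Fatou set. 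Symmetrically $q\in Prep(g)=Prep(f)$ flows under $f$ to $p$. The first key step is to promote these linking relations to the equality $p=q$, that is $g(p)=p$; this is exactly where the \emph{global} coincidence $Prep(f)=Prep(g)$ (not a merely local one) is used, and where boundedness of $D$ enters through the hyperbolic metric and the Denjoy--Wolff structure of the self-maps $f,g\colon D\to D$.

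Granting $g(p)=p$, I would translate $p$ to the origin and pass to the germs $\tilde f,\tilde g\in\mathfrak{S}$; the algebra of $\langle f,g\rangle$ is unchanged, since a germ at $p$ determines the entire map by the identity theorem. Here $\tilde f\in\Gamma$ has linear coefficient $\lambda$ with $0<|\lambda|<1$, hence is of infinite order, while $\tilde g\neq\tilde f^{-n}$ for every $n$: the germs $\tilde f^{-n}$ are repelling at $0$ (multiplier $\lambda^{-n}$ of modulus $>1$), whereas $\tilde g$ is non-repelling there because $p$ lies in a Fatou component (so $g'(p)$ is either attracting/superattracting or an irrational Siegel rotation). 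Thus the hypotheses of Theorem \ref{th.sharingiteration} are met, and it suffices to establish its item (1): that $f$ and $g$ share an iteration.

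To produce the shared iteration I would use the Koenigs coordinate $\phi$ linearizing $f$, $\phi\circ f=\lambda\,\phi$, in which the set $E:=Prep(f)\cap D=Prep(g)\cap D$ is precisely the $f$-grand orbit of $p$; since $f$ is non-injective, $E$ is infinite, and as $E\subset D$ with $D$ bounded it has an accumulation point in $\C$ (lying on $\partial D\subset J(f)$). Both $f$ and $g$ act on the single countable set $E$, with $f$ collapsing it toward $p$ and $g$ toward $q=p$; analysing the induced $f$-level filtration $\ell(z)=\min\{n:f^{n}(z)=p\}$ on $E$, one shows that $g$ must respect the linear structure of $\phi$, yielding a relation $f^{n}=g^{m}$ first on $E$ and then, by the identity theorem applied at the accumulation point supplied by boundedness, on all of $\C$. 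With item (1) of Theorem \ref{th.sharingiteration} in hand, its equivalence with item (3) shows $\langle f,g\rangle$ is abelian and contains no free abelian non-cyclic subsemigroup; being abelian it contains no free non-cyclic subsemigroup either, which is the assertion.

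The main obstacle is the passage from the purely set-theoretic hypothesis $Prep(f)=Prep(g)$ to the rigid relations $g(p)=p$ and $f^{n}=g^{m}$: coincidence of preperiodic \emph{sets} carries no a priori matching of multipliers, so the classical Fatou--Baker multiplier argument is unavailable and one must instead exploit the combinatorics of the shared grand orbit $E$ together with the hyperbolic geometry of the bounded domain $D$. The genuinely delicate situation is when $q$ is a Siegel centre, so that $g|_{D}$ is an automorphism whose local preperiodic set is just $\{q\}$, or is superattracting, where $g$ has far fewer nearby preperiodic points than the infinite set $E$ demands; reconciling this scarcity with $E$ is what forces the conclusion, and it is precisely the boundedness of $D$, through hyperbolicity and the resulting accumulation of $E$ inside $\C$, that makes the reconciliation possible. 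This mirrors how Example \ref{ex.SameJulia}, with its unbounded dynamics and disjoint preperiodic sets, escapes the theorem.
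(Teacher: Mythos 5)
Your plan has a genuine gap, and it lies at the heart of the argument: you propose to verify item (1) of Theorem \ref{th.sharingiteration}, i.e.\ that $f$ and $g$ share an iteration, and then conclude via its item (3) that $\langle f,g\rangle$ is abelian. But the hypothesis $Prep(f)=Prep(g)$ does not imply a shared iterate, nor commutativity, even in the polynomial model: by Proposition \ref{pr.ErLevRitt}, equality of preperiodic sets is equivalent only to the Levin relations $f^n\circ g^m=f^{2n}$, $g^m\circ f^n=g^{2m}$, which are strictly weaker than $f^n=g^m$. Concretely, $f=z^2$ and $g=-z^2$ share the set of preperiodic points and the unit disk as a bounded invariant (super)attracting component, yet $f\circ g\neq g\circ f$ and $g^n=-z^{2^n}$ is never an iterate of $f$; the same phenomenon occurs for $f$ and $\sigma\circ f$ with $\sigma$ a symmetry of $J(f)$. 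So the intermediate goal you set yourself is false in general, and the theorem must cover non-abelian semigroups; any route through Theorem \ref{th.sharingiteration} is a dead end. The two steps you flag as ``key'' --- that $g(p)=p$, and that the filtration of the grand orbit $E$ forces $g$ to ``respect the linear structure of $\phi$'' --- are moreover only asserted, not proved, and the second one is exactly where the argument would break on the examples above. Your dispatch of the superattracting case by citing Theorem \ref{th.superatracting} also goes the wrong way: that theorem \emph{assumes} the absence of free and free abelian subsemigroups and \emph{concludes} the sharing of preperiodic points, which is the converse of what you need here.

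The paper's proof avoids all of this by staying at the level of the Levin relations. It conjugates $f|_D$ and $g|_D$ by a Riemann map $h:D\to\mathbb{D}$ to finite Blaschke products $B_f,B_g$ (here boundedness of $D$ is used, and the attracting hypothesis makes $B_f$ non-injective); a theorem of Ye then upgrades equality of preperiodic sets to equality of the measures of maximal entropy of $B_f$ and $B_g$; Theorem \ref{th.AmenabilityI} yields that $h\circ\langle f,g\rangle\circ h^{-1}$ is right amenable, hence contains no non-cyclic free subsemigroup, and Proposition \ref{pr.ErLevRitt} yields the Levin relations, which rule out non-cyclic free abelian subsemigroups; since an entire map is determined by its restriction to $D$, these relations transfer back to $\langle f,g\rangle$. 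If you want to salvage your local approach, the correct target is the pair of Levin relations for suitable iterates (as in Lemmas \ref{lem.forth6} and \ref{lm.preperiodic}), not a common iterate.
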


By analogy with the results of 
\cite{CMAmenability} and \cite{CMAmenabilityII}, the 
condition of having no free non-cyclic  subsemigroups 
can be replaced with the condition that every non-cyclic subsemigroup is right amenable.

\subsection{Right amenable and nearly abelian semigroups of entire maps}
 
Observe that every subsemigroup  $S \subset Ent(\C)$ is right amenable, whenever $S$ contains a 
constant endomorphism, say  $g(z)=c$. Indeed, the delta measure $\delta_g$ 
based on $g$, is a right invariant measure defined on the algebra of all subsets of $S$.  Furthermore,  if $c$ is a fixed point 
of every element in $S$, then $\delta_g$ is  left invariant and, hence, $S$  is amenable. The affine group $\mathrm{Aff}(\C)$, 
the  group of injective  entire maps, is amenable as it is 
a semidirect product of abelian  groups. However, $\mathrm{Aff}(\C)$ contains free non-cyclic
subsemigroups (see, for example, \cite{CMAmenability,CMAmenabilityII}).

Right amenable polynomial semigroups are reasonably characterized in Theorem \ref{th.Several}.
However, the right amenable semigroups of transcendental entire maps are less known. 
 
Next, we show that the class of right  amenable semigroups of 
non-constant, non-injective, entire maps is sufficiently 
rich.  For this we need a generalization of the notion 
of nearly abelian semigroups which was introduced in
holomorphic dynamics in \cite{HinkMartSemi}.

\begin{definition}
    A semigroup $S$ of non-constant entire maps is called \emph{nearly abelian} if 
    for every pair of elements $f,g \in S$, there exists 
   $\gamma\in \mathrm{Aff}(\C)$ such that
    
    \begin{equation}\label{eq.nearlyabel}
         f\circ g=\gamma \circ g \circ f. \tag{1}
    \end{equation}

    The whole collection $K(S)$ of all such $\gamma$ satisfying Equation 
    \eqref{eq.nearlyabel}, for all pairs $f,g\in S$, is 
    called the \emph{commutator set} of $S$.
\end{definition}

In the latter definition, we do not assume that $S$ consists only of transcendental entire maps. For example, 
$S$ may include affine maps. Also, we neither assume that $K(S)$  forms a compact subfamily  
of $\mathrm{Aff}(\C)$ nor that the elements of $K(S)$ leave the Fatou set of $S$ invariant
(compare with \cite{HinkMartSemi}).  If $S$ consists only of polynomials, the compactness 
of $K(S)$ follows from Lemma \ref{lm.algebraic} (below), theorem in \cite{AtelaHu}  and 
Corollary 24 in \cite{CMAmenability}. The following is an example of a nearly abelian semigroup 
with non-compact commutator set $K(S)$.

\begin{example} 
Let $f_n(z)=\exp(z)+2\pi i n$ for $n\in \mathbb{Z}$, then $\{f_n\}$ generates 
an infinitely generated nearly abelian semigroup satisfying 
the relations:

\begin{itemize}
\item For every $i,j\in Z$, $f_i\circ f_j=f^2_i$ and;
\item The commutator set is an infinite cyclic group generated 
by $z\mapsto z+2\pi i.$ 
\end{itemize}

\end{example}

Next theorem corresponds to the implication from part 5 to part 4 of Theorem \ref{th.Several}.

\begin{theorem}\label{th.nearlyabelian}
Let $S\subset Ent(\C)$ be a nearly abelian semigroup 
with commutator set $K(S)$ and let $G(S)$ be the group 
generated by $K(S)$. Then, the semigroup $T(S)=\langle G(S), 
S \rangle$, generated by $S$ and 
$G(S)$ is right amenable.     
\end{theorem}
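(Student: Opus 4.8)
The plan is to prove right amenability of $T(S)=\langle G(S),S\rangle$ by exhibiting an explicit right-invariant mean, built from the nearly abelian structure. The key structural observation is that the defining relation $f\circ g=\gamma\circ g\circ f$ with $\gamma\in K(S)\subset\mathrm{Aff}(\C)$ lets us push all affine factors to the left when rewriting any word in the generators. So first I would establish a normal form: every element $h\in T(S)$ can be written as $h=\alpha\circ w$, where $\alpha\in G(S)$ and $w$ is a word in $S$ whose \emph{multiset} of $S$-letters is an invariant of $h$, independent of the order in which they were composed. The relation \eqref{eq.nearlyabel} only permutes the $S$-letters at the cost of introducing an affine correction $\gamma$, so commuting letters past one another changes $\alpha$ but not the underlying multiset of $S$-symbols. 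This realizes $T(S)$ as an extension of the free abelian monoid on the generators of $S$ by the group $G(S)$.

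With this normal form in hand, the second step is to reduce right amenability of $T(S)$ to amenability of the two pieces. The map sending $h=\alpha\circ w$ to the multiset of $S$-letters of $w$ is a semigroup homomorphism $\pi\colon T(S)\to A$, where $A$ is the free abelian monoid (a quotient, since composition of words adds multisets). The monoid $A$ is abelian, hence right amenable. The kernel-fiber over the identity multiset is exactly $G(S)$, which is a group generated by the affine maps $K(S)\subset\mathrm{Aff}(\C)$; since $\mathrm{Aff}(\C)$ is solvable (a semidirect product of abelian groups, as noted in the excerpt) every subgroup is amenable, so $G(S)$ is amenable. The standard fact that an extension of a right-amenable semigroup by an amenable group (with amenable fibers) is right amenable then yields a right-invariant mean on $T(S)$. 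Concretely, I would build the mean by first averaging over the $G(S)$-fibers using a left-invariant mean on the group $G(S)$, then averaging the resulting function on $A$ using invariance of the abelian monoid.

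The main obstacle I anticipate is the normal-form / well-definedness step, and in particular controlling the affine correction terms. When I rewrite a product $h_1\circ h_2$ into the form $\alpha\circ w$, each transposition of adjacent $S$-letters spawns a factor $\gamma$ that itself must be conjugated past the remaining letters, and these conjugates $g\mapsto g^{-1}\circ\gamma\circ g$ need not stay in $\mathrm{Aff}(\C)$ for general entire $g$. The care here is that $\gamma$ multiplies on the \emph{left}, so after moving it to the front one only ever conjugates affine maps by other elements that have already been normalized into affine form; I must check that the collection of correction terms generated this way stays inside $G(S)$, which is precisely why $T(S)$ is defined to contain all of $G(S)$ rather than just $S$. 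Verifying that $\pi$ is a genuine homomorphism onto an \emph{abelian} monoid — i.e.\ that the residual dependence on composition order lives entirely in the $G(S)$-coordinate — is the crux, and once it is secured the amenability bookkeeping is routine.

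Since $\mathrm{Aff}(\C)$ contains free non-cyclic subsemigroups, one should note that right amenability here does not force $T(S)$ itself to avoid such subsemigroups; the affine part may well contain them, but this does not obstruct the existence of a right-invariant mean, consistent with the remark that $\mathrm{Aff}(\C)$ is amenable as a group while harboring free subsemigroups.
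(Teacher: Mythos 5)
Your strategy is, at bottom, the same as the paper's: realize $T(S)$ as (a quotient of) a semidirect-product-type extension in which the affine part $G(S)\subset\mathrm{Aff}(\C)$ is amenable and the remaining part is abelian, then transfer right amenability. The paper executes this iteratively, peeling off one generator of $S$ at a time, writing $\langle G(S),S_{n+1}\rangle$ as a quotient of $\langle G(S),S_n\rangle\rtimes_\psi\langle f_{n+1}\rangle$ and invoking Klawe's theorem (Proposition \ref{LemmaKlawe}), with Day's exhaustion theorem and Lemma \ref{lemma.leftright} handling the reduction to finitely generated pieces; you instead propose a single global normal form $h=\alpha\circ w$ and an explicit iterated mean, which (if completed) would pleasantly avoid the countability reduction. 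One claim of yours is false but harmless: the multiset of $S$-letters is an invariant of the \emph{word} modulo the rewriting moves, not of the element $h$ as an entire map --- distinct words with distinct multisets can represent the same function (e.g.\ if $f\circ g$ happens to coincide with another generator), so your $\pi$ need not be well defined on $T(S)$. This does not sink the construction, because you never need $\pi^{-1}$: a possibly many-to-one parametrization of $T(S)$ by $G(S)\times A$ suffices, and $M(\phi)=m_A\bigl(a\mapsto m_{G(S)}(\alpha\mapsto\phi(\alpha\circ w_a))\bigr)$ is still a positive normalized functional whose right invariance follows from the fact that $w_a\circ f$ and the chosen representative $w_{a+[f]}$ differ by a left factor in $G(S)$, together with right invariance of $m_{G(S)}$. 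But you should argue it in these terms (or pass to the presented semigroup and use that right amenability survives homomorphic images) rather than via a homomorphism onto an abelian monoid and an unstated ``extension'' theorem, which for semigroups is not standard in the generality you invoke --- the clean citable statement is precisely Klawe's semidirect product result.

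The substantive gap is the one you flag and then leave open: the entire normal form rests on the assertion that for every $s\in S$ and every $\gamma\in G(S)$ there exists $\gamma_s\in G(S)$ with $s\circ\gamma=\gamma_s\circ s$, i.e.\ that $S$ acts on $G(S)$ by semiconjugation. Without this, an affine factor sitting to the right of an $S$-letter cannot be moved to the front and nothing downstream works. This is exactly Lemma \ref{lm.algebraic} of the paper, and it has a short proof you should supply rather than defer: for $\omega\in K(S)$ arising from a relation $f\circ g=\omega\circ g\circ f$, compute
$$s\circ\omega\circ g\circ f=s\circ f\circ g=\tau\circ f\circ(s\circ g)=\tau\circ h\circ(s\circ g)\circ f$$
for some $\tau,h\in K(S)$, using that $s\circ g\in S$, and cancel $g\circ f$ on the right (entire maps are right cancellative) to get $s\circ\omega=\tau\circ h\circ s$; inverses are handled by rewriting $s\circ\omega=\omega_s\circ s$ as $s\circ\omega^{-1}=\omega_s^{-1}\circ s$, and products follow. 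With that lemma in place your argument closes.
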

The semigroup $T(S)=\langle G(S),S \rangle$, in 
Theorem \ref{th.nearlyabelian} is also  nearly abelian. 
As shown in \cite{CMAmenabilityII}, if $K(S)$ is finite, then $S$ 
is right amenable. We extend this statement for entire maps in 
the following corollary.

\begin{corollary}\label{cor.nearlyabelian}
Let $S$ be a nearly abelian semigroup of non-injective entire maps. Then $S$ is 
right amenable whenever one of the following conditions holds true. 
\begin{enumerate}
    \item The commutator set $K(S)$ is finite.
    \item The semigroup $T(S)$ in Theorem \ref{th.nearlyabelian} is 
 cancellative and $S$ contains no free non-cyclic  subsemigroups.
\end{enumerate}
\end{corollary}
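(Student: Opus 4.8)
The plan is to deduce both statements from Theorem \ref{th.nearlyabelian}, which already provides that $T(S)=\langle G(S),S\rangle$ is right amenable, and then to transfer right amenability from $T(S)$ down to the subsemigroup $S$. Since $S$ is in general neither a left nor a right ideal of $T(S)$, this transfer cannot be obtained by merely restricting a right invariant mean; instead, in each case I would exploit the extra hypothesis to realize $S$ as a ``coordinate'' of $T(S)$ along which a right invariant mean pushes forward.

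For part 1, I would first use the nearly abelian relation $f\circ g=\gamma\circ g\circ f$ with $\gamma\in K(S)$, together with the finiteness of $K(S)$, to normalize words in the generators $G(S)\cup S$ and obtain a decomposition $T(S)=G(S)\cdot S$, in which every element of $T(S)$ is written as $\alpha\circ s$ with $\alpha\in G(S)$ and $s\in S$. This is exactly the step where finiteness of $K(S)$ is essential: it keeps the commutation cocycle that appears while reordering the $S$-factors finite-valued, so that the reordering terminates in a controlled normal form, as in the scheme of \cite{CMAmenabilityII}. Granting such a decomposition, I fix $\nu\in RIM(T(S))$ and, for $f\in\ell^\infty(S)$, define $\hat f\in\ell^\infty(T(S))$ by $\hat f(\alpha\circ s)=f(s)$. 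Writing $R_a$ for right translation, $(R_af)(s)=f(s\circ a)$, the key observation is that right multiplication by $a\in S$ preserves the $G(S)$-coordinate, since $(\alpha\circ s)\circ a=\alpha\circ(s\circ a)$ with $s\circ a\in S$, whence $R_a\hat f=\widehat{R_af}$. Consequently $\mu(f)=\nu(\hat f)$ defines a right invariant mean on $S$. The point requiring care is the well-definedness of $\hat f$, namely that the $S$-coordinate is determined up to the ambiguity controlled by $K(S)$; resolving this redundancy by an averaging argument over the finite set $K(S)$ is the technical heart of this part.

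For part 2, cancellativity lets me pass to groups. Since $T(S)$ is cancellative and right amenable, it satisfies the corresponding Ore condition and embeds into its group of fractions $Q$, which is amenable because the amenability of a cancellative semigroup is inherited by its group of fractions. Thus $S\subset T(S)\subset Q$ with $Q$ amenable. Now I would invoke the Tits-alternative mechanism for cancellative semigroups: if $S$ fails to be reversible, one can extract two elements generating a free non-cyclic subsemigroup. As $S$ contains no such subsemigroup, $S$ is reversible, hence possesses a group of fractions $G_S$, which is a subgroup of the amenable group $Q$ and therefore amenable. A cancellative reversible semigroup whose group of fractions is amenable is right amenable, so $S$ is right amenable.

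The main obstacle is part 1: producing the normal form $T(S)=G(S)\cdot S$ and proving that the pullback $\hat f$ is well defined, since the group $G(S)$ may be infinite even when $K(S)$ is finite (for instance $K(S)=\{z+2\pi i\}$ generates an infinite cyclic group). The finiteness of $K(S)$ controls only the commutators, not $G(S)$, so the redundancy $\alpha_1\circ s_1=\alpha_2\circ s_2$ must be handled by the explicit averaging over $K(S)$ rather than by uniqueness of the decomposition. In part 2, the delicate point is the bookkeeping of the handedness of the Ore and fraction constructions, so that right amenability, rather than its left analogue, is the property ultimately transferred to $S$.
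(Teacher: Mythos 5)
The paper does not print a proof of this corollary, but its intended argument for item 2 is visibly the one-line application of Theorem \ref{th.Donnelly}: $T(S)$ is cancellative by hypothesis and right amenable by Theorem \ref{th.nearlyabelian}, $S\subset T(S)$ contains no free non-cyclic subsemigroup, and passing to opposite semigroups converts Donnelly's left-amenability statement into the right-handed one needed here. Your argument for item 2 is essentially a re-derivation of that theorem through Ore theory: right amenable cancellative implies reversible, embed into the amenable group of fractions, then use the alternative ``a non-reversible subsemigroup of a group contains a free non-cyclic subsemigroup'' to get reversibility of $S$ and amenability of its own group of fractions. This is sound, and you correctly flag the left/right bookkeeping as the delicate point; but note that the alternative you invoke is itself a nontrivial theorem (it is precisely the engine inside Donnelly's proof), so as written your part 2 replaces a citation the paper has already set up with an unproved claim of comparable depth. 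Citing Theorem \ref{th.Donnelly} directly closes this.

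Item 1 is where your proposal has a genuine gap, and you have diagnosed it yourself without repairing it. The normal form $T(S)=G(S)\cdot S$ follows from Lemma \ref{lm.algebraic} alone and does not use finiteness of $K(S)$; the finiteness hypothesis therefore does no work anywhere in your argument, which is a warning sign. The pushforward $\mu(f)=\nu(\hat f)$ is not defined unless the $S$-coordinate of an element of $T(S)$ is well defined, and it is not: whenever $\gamma\circ s\in S$ for some nontrivial $\gamma\in G(S)$ and $s\in S$ (exactly the situation of the example $f_n=\exp(z)+2\pi i n$ in the paper, where $f_n=\gamma^n\circ f_0$), the same element of $T(S)$ admits decompositions $\alpha_1\circ s_1=\alpha_2\circ s_2$ with $s_1\neq s_2$, and $\hat f$ would have to be constant on $G(S)$-orbits in $S$ --- which only produces a mean on a quotient of $S$, not on $S$. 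Averaging over the finite set $K(S)$ cannot resolve this, because the redundancy is governed by the possibly infinite group $G(S)$ (more precisely by $G(S)\cap SS^{-1}$), not by $K(S)$; you acknowledge this but leave it as ``the technical heart,'' i.e., unproved. A correct treatment of item 1 has to make the finiteness of $K(S)$ bite --- this is the content of the result in \cite{CMAmenabilityII} that the corollary is explicitly extending --- and your proposal does not supply that mechanism.
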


The second item in Corollary \ref{cor.nearlyabelian} is sharp. Indeed, by Proposition \ref{pr.Bottcher} the semigroup 
$S=\langle z^2,\lambda z^2 \rangle$ is free, whenever
$\lambda$ is not a root of unity.  But  $S$ is nearly abelian whose commutator $K(S)$ is infinite and 
generates a group $G(S)$ which is a subgroup of the cyclic group $\langle z\mapsto \lambda z\rangle$. 
Then by Proposition \ref{LemmaKlawe},  $\langle G(S),z^2\rangle$ is a
right amenable semigroup  containing the nearly abelian free semigroup $S$.

\subsection{Left amenability and Stone-C\v{e}ch extension}

As a consequence of von Neumann and Margulis theorems mentioned before, every 
left amenable group of automorphisms of the unit circle admits an 
invariant probability measure. The following theorem 
serves as an analogue of this statement for semigroups of entire maps.

\begin{theorem}\label{th.AMEStoneCech} Let $S\subset Ent(\C)$ be 
a left amenable semigroup of entire maps, then the semigroup
 $\tilde{S}$, consisting of continuous extensions of elements of $S$ 
 onto $\beta(\C)$, the Stone-\v{C}ech compactification of the complex plane,
admits an invariant probability measure on $\beta(\C)$.
\end{theorem}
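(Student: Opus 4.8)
The plan is to reduce the existence of an invariant probability measure on the compact Hausdorff space $\beta(\C)$ to the existence of an $S$-invariant mean on the commutative algebra $C_b(\C)$ of bounded continuous functions, and then to manufacture such a mean by transporting a left-invariant mean on $\ell^\infty(S)$ through an orbit map. First I would recall the functoriality of the compactification: every continuous self-map $f$ of $\C$, in particular every entire map, extends uniquely to a continuous $\tilde f\colon\beta(\C)\to\beta(\C)$ with $\widetilde{f\circ h}=\tilde f\circ\tilde h$, so that $\tilde S=\{\tilde f:f\in S\}$ is genuinely a semigroup of continuous endomorphisms of $\beta(\C)$. Under the canonical identification $C(\beta(\C))\cong C_b(\C)$ I write $g^\beta$ for the extension of $g\in C_b(\C)$, and uniqueness of continuous extensions yields $g^\beta\circ\tilde f=(g\circ f)^\beta$. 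By the Riesz representation theorem, a Borel probability measure $\mu$ on $\beta(\C)$ with $\tilde f_*\mu=\mu$ for all $f\in S$ is exactly the same datum as a positive normalised linear functional $\Lambda$ on $C_b(\C)$ satisfying $\Lambda(g\circ f)=\Lambda(g)$ for all $f\in S$ and all $g\in C_b(\C)$; this reformulation is what I would actually construct.

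To build $\Lambda$, fix a base point $z_0\in\C$ and define the orbit map $P\colon C_b(\C)\to\ell^\infty(S)$ by $(Pg)(s)=g(s(z_0))$. Since $g$ is bounded, $Pg$ is bounded, so $P$ is well defined; moreover $P$ is linear, positive, and sends the constant function $1$ to the constant function $1$. Let $m\in LIM(S)$ be a left-invariant mean, which exists because $S$ is left amenable, and set $\Lambda=m\circ P$; then $\Lambda$ is automatically a mean on $C_b(\C)$. The key computation is the intertwining relation $P(g\circ f)=\ell_f(Pg)$, where $\ell_f$ is the left translation $(\ell_f\psi)(s)=\psi(fs)$ on $\ell^\infty(S)$: writing the semigroup product as composition $fs=f\circ s$, one has $(P(g\circ f))(s)=g(f(s(z_0)))=g((fs)(z_0))=(Pg)(fs)$. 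Combining this identity with the left-invariance $m\circ\ell_f=m$ gives $\Lambda(g\circ f)=m(\ell_f Pg)=m(Pg)=\Lambda(g)$, which is precisely the invariance sought.

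It then remains to translate the functional-analytic statement back into measure language. Letting $\mu$ be the probability measure furnished by Riesz for the state $\Lambda$, the identity $g^\beta\circ\tilde f=(g\circ f)^\beta$ turns $\Lambda(g\circ f)=\Lambda(g)$ into $\int g^\beta\,d(\tilde f_*\mu)=\int g^\beta\,d\mu$ for every $g\in C_b(\C)$, so that $\tilde f_*\mu=\mu$ for every $f\in S$, which is exactly the assertion of the theorem.

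I do not anticipate a serious analytic obstacle; the argument uses only the universal property of $\beta(\C)$, the Riesz representation theorem, and the defining property of a left-invariant mean, and in fact it works verbatim for any left-amenable semigroup of continuous self-maps. The single point demanding care is the bookkeeping of conventions: one must match the composition order in $S$ with the side on which the mean is invariant, so that the intertwining $P(g\circ f)=\ell_f(Pg)$ pairs with left- rather than right-invariance of $m$; this is exactly why left amenability is the correct hypothesis. Finally I would note that the measure $\mu$ may well be carried by the corona $\beta(\C)\setminus\C$, which is consistent with Example \ref{ex.SameJulia}, where no invariant probability measure exists on $\C$ itself.
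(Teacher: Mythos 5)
Your proof is correct and follows essentially the same route as the paper: the paper's Proposition \ref{pr.TheoremStoneCech} defines $H(\phi)(g)=\int_X \phi(g(t))\,d\sigma(t)$ for an arbitrary probability measure $\sigma$, composes with a left-invariant mean, and invokes Riesz, which is exactly your construction with $\sigma=\delta_{z_0}$. The only cosmetic differences are that you specialize to a point mass and work directly on $C_b(\C)\cong C(\beta(\C))$ rather than first stating the result for a general compact Hausdorff space and then applying the universal property of $\beta(\C)$.
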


Example \ref{ex.SameJulia} shows that the support of the 
invariant measure of Theorem \ref{th.AMEStoneCech} may be 
a subset of $\beta(\C)\setminus \C.$

\subsection{Ruelle representantion}

We discuss left amenability for a representation (bounded homomorphism) of a 
semigroup $S$ of entire maps $S$ which we call the \emph{Ruelle representation}.
More precisely, we will consider the push-forward action of $S$ on 
the subspace of integrable measurable quadratic differentials supported on a 
completely $S$-invariant positive Lebesgue measurable set. 

Ruelle representation is  closely related with the quasiconformal deformations of the semigroup $S.$
Propositions \ref{pr.Estrella}  and \ref{prop.tresestrellas}, which present independent interest,
 show that a semigroup $S\subset Ent(\C)$  satisfying the Levin 
relations is quasiconformally deformable whenever $S$ contains 
a quasiconformally deformable generator. 
Even more, the semigroup $S=\langle  \gamma\circ g\rangle$,
with $\gamma\in Deck(g)=\{\gamma\in \mathrm{Aff}(\C): g(\gamma)=g\}$, is structurally 
stable over any analytic family $\mathcal{F}\subset Ent(\C)$ for 
which there exists an element $f\in S$ structurally 
stable in $\mathcal{F}$.

For polynomials, we have the following theorem.

\begin{theorem}\label{th.poldeformable}
A non-exceptional semigroup $S$ of polynomials is quasiconformally 
deformable if and only if $S$ is right amenable and contains a 
quasiconformally deformable generator. 
\end{theorem}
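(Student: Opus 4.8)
The plan is to prove the two implications separately, using Theorem \ref{th.Several} as the bridge that identifies right amenability of a non-exceptional polynomial semigroup with the nearly abelian condition (and hence, after a short algebraic step, with the Levin relations), and using Propositions \ref{pr.Estrella} and \ref{prop.tresestrellas} to pass between the Levin relations and quasiconformal deformability. Thus the whole statement reduces to the chain ``right amenable $\Leftrightarrow$ Levin relations'' supplemented by the deformability criterion for semigroups satisfying those relations.

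For the implication ($\Leftarrow$), I would assume $S$ is right amenable and contains a quasiconformally deformable generator $f_0$. Since $S$ is a non-exceptional polynomial semigroup, Theorem \ref{th.Several} shows that right amenability is equivalent to $S$ being nearly abelian, so every pair $f,g\in S$ satisfies a relation $f\circ g=\gamma\circ g\circ f$ with $\gamma\in\mathrm{Aff}(\C)$. First I would verify, via Lemma \ref{lm.algebraic}, that this nearly abelian structure on the generating set yields the Levin relations. Then I would invoke Propositions \ref{pr.Estrella} and \ref{prop.tresestrellas}: a semigroup satisfying the Levin relations is quasiconformally deformable as soon as it contains a quasiconformally deformable generator. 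Applying this to $f_0$ gives that $S$ is quasiconformally deformable.

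For the implication ($\Rightarrow$), I would assume $S$ is quasiconformally deformable, so there is a nontrivial $S$-invariant Beltrami differential $\mu$ supported on a completely $S$-invariant set $E$ of positive Lebesgue measure. The easy half is producing a deformable generator: for each generator $f$ the pullback relation $f^{*}\mu=\mu$ holds with $\mu$ nontrivial, so in fact every generator is itself quasiconformally deformable. The substantial half is right amenability. By Theorem \ref{th.Several} it suffices to show that any two elements $f,g\in S$ share the measure of maximal entropy, equivalently that $S$ contains no non-cyclic free subsemigroup. The differential $\mu$ determines a measurable line field on $E$ that is preserved by every element of $S$, and for non-exceptional polynomials a shared nontrivial invariant line field is rigid enough to force $f$ and $g$ to share their maximal measure. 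I would split the argument according to whether $E$ meets the Julia set or lies in the Fatou set, extracting the shared maximal measure in each case and then concluding right amenability from Theorem \ref{th.Several}.

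The main obstacle is precisely this rigidity step in the ($\Rightarrow$) direction: converting the analytic datum of a common nontrivial invariant Beltrami differential on a completely invariant positive-measure set into the algebraic conclusion that the generators nearly commute, i.e. satisfy the Levin relations. I would isolate this as a lemma stating that, for non-exceptional polynomials, a shared nontrivial invariant line field forces those relations. The mechanism against freeness is that along a free pair the push-forward dynamics spreads $\mu$ and drives its norm to zero on the Julia set, so no free pair can preserve a common invariant differential there; the Fatou-supported case must then be handled on an equal footing, ruling out a common preserved line field in the Fatou set as well. Closing this incompatibility carefully, and treating the two support cases uniformly, is where the real work lies; once it is in place, Theorem \ref{th.Several} delivers right amenability at once.
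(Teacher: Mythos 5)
Your overall architecture matches the paper's: the statement is proved there as an immediate consequence of Theorem \ref{th.AmenabilityI} together with Theorem \ref{th.qcdeformable}, which says that a free pair of polynomials generates a semigroup that is not quasiconformally deformable; your reduction of the hard direction to ``no free pair preserves a common nontrivial invariant Beltrami differential'' is exactly that lemma. The gap is in how you propose to prove it. Your stated mechanism --- that ``along a free pair the push-forward dynamics spreads $\mu$ and drives its norm to zero on the Julia set'' --- is not an argument; ruling out invariant line fields supported on Julia sets is precisely the (open, in general) no-invariant-line-fields problem, and nothing in your sketch explains why freeness of the pair would make it tractable. The paper avoids this entirely by analyzing the support of $\mu$ relative to the basins of infinity: if $\mu$ vanishes on $A_\infty(P)$ one deduces $K(P)\subset K(Q)$ and, by symmetry, $J(P)=J(Q)$, which already forces right amenability by Theorem \ref{th.Several} and contradicts freeness; if instead $\mu$ is nonzero on $A_\infty(P)$, its restriction is governed by the equipotential foliation, so $Q$-invariance forces $A_\infty(P)$ to be completely $Q$-invariant and the harmonic measure at infinity to be $Q$-invariant, whence Brolin's theorem gives a shared measure of maximal entropy and again right amenability. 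Your plan to ``split according to whether $E$ meets the Julia set or lies in the Fatou set'' points in the right direction, but the decisive case analysis and the equipotential/Brolin mechanism are missing, and the Julia-set case should be dissolved into a shared-Julia-set statement rather than attacked head on.

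Two smaller points. In the converse direction you invoke Propositions \ref{pr.Estrella} and \ref{prop.tresestrellas} as if they applied to any semigroup satisfying Levin relations; Proposition \ref{pr.Estrella} is stated for $D$-semigroups, i.e.\ relations $f_i\circ f_j=f_i^2$ on the generators themselves, whereas a right amenable polynomial semigroup satisfies such relations only on suitable iterates, so you need an extra step (e.g.\ the structure $g=\sigma\circ f^k$ from Corollary 24 in \cite{CMAmenability} and \cite{AtelaHu}) to propagate the invariant Beltrami differential from one generator to all of $S$. And the claim that every generator is automatically quasiconformally deformable because it preserves a nontrivial $\mu$ conflates ``admits a nontrivial invariant Beltrami differential'' with the paper's definition of deformability (the resulting conjugacy must not be affine); this deserves at least a sentence, although the paper itself treats the whole converse as immediate.
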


We say that $f \in Ent(\C)$  belongs to the class 
$\mathcal{R}$, whenever there exists a rectifiable ray
$R\subset \C$ containing the singular set of $f$ (that is, the 
union of the sets of critical and asymptotic values).
To every non constant, non-linear function $f \in \mathcal{R}$, we associate $D(f)$ 
the right amenable semigroup of operators acting on 
$L_1(A)$ space for every  completely $f$-invariant set $A$ of positive Lebesgue measure.

Proposition \ref{pr.Ruellerightamenable} shows that the action 
of $D(f)$ on $L_1(A)$ is left amenable, whenever $A$ does 
not admit invariant Beltrami differentials. Conversely,
we can show the following theorem. 

\begin{theorem}\label{th.leftRuelle}
Let $f \in \mathcal{R}$  and  
let $A\subset J(f)$ be a completely  invariant set, 
ergodic with respect to $f$. Then there is no non-zero fixed 
point of the Ruelle  operator in $L_1(A)$, whenever the action of 
every finitely generated  subsemigroup $S\subset D(f)$ on 
$L_1(A)$ is left-amenable. 
\end{theorem}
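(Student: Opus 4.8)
The plan is to argue by contraposition: assuming a non-zero fixed point $\phi\in L_1(A)$ of the Ruelle (push-forward) operator $f_*$, I will exhibit a finitely generated subsemigroup of $D(f)$ whose action on $L_1(A)$ admits no left-invariant mean, contradicting the hypothesis. The hypothesis is then applied to one cleverly chosen two-generator subsemigroup, which is why it is phrased for \emph{every} finitely generated subsemigroup.

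First I would extract rigidity from the fixed-point equation $f_*\phi=\phi$. Since $A$ is completely $f$-invariant, the push-forward preserves the total mass of non-negative densities, so $\int_A f_*|\phi|=\int_A|\phi|$. Combined with the pointwise bound $|\phi|=|f_*\phi|\le f_*|\phi|$, the vanishing of $\int_A(f_*|\phi|-|\phi|)$ forces both $f_*|\phi|=|\phi|$ and the no-cancellation identity $|f_*\phi|=f_*|\phi|$ almost everywhere. The latter means that for a.e. $z$ the terms $\phi(w)/(f'(w))^2$ over $w\in f^{-1}(z)\cap A$ share a common argument, so $\mu=\bar\phi/|\phi|$ is a measurable $f$-invariant line field on $A$, an invariant Beltrami differential of unit modulus as in Proposition \ref{pr.Ruellerightamenable}. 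The support of $|\phi|$ is $f$-invariant, so ergodicity of $A$ upgrades $|\phi|$ to the unique invariant density with $|\phi|>0$ a.e., spreading the invariant mass over all of $A$.

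Next I would pass to the branch description of the Ruelle representation. Because $f\in\mathcal{R}$, the singular set lies on a rectifiable ray $R$, so on the simply connected slit domain $\C\setminus R$ every inverse branch $g_i$ of $f$ is single-valued with pairwise disjoint sheets $g_i(\C\setminus R)$; each branch yields a bounded operator $(T_i\phi)(z)=\phi(g_i(z))(g_i'(z))^2$ belonging to $D(f)$, and $f_*=\sum_i T_i$. The coherence identity becomes $\sum_i|T_i\phi|=|\phi|$, so the $T_i$ split the invariant mass into pieces supported on the disjoint sheets; a word $T_{i_1}\cdots T_{i_k}$ then has range in the nested sheet $g_{i_1}\cdots g_{i_k}(\C\setminus R)$, and distinct words have disjoint ranges. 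Fixing two branches with $T_1|\phi|\neq 0$ and $T_2|\phi|\neq 0$ (available since $f$ is non-injective and $|\phi|>0$ a.e.), the word-to-operator map is injective, so $S=\langle T_1,T_2\rangle\subset D(f)$ is free and the induced mass decomposition of $\phi$ is paradoxical.

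Finally I would invoke the hypothesis for this specific $S$: its action on $L_1(A)$ would carry a left-invariant mean $m$, and evaluating $m$ on the paradoxical splitting of $|\phi|$ produces the standard contradiction, since invariance forces the total mass to equal a proper sub-sum of itself. Hence $S$ is not left-amenable, contradicting the assumption and ruling out $\phi$. The main obstacle is the transition in the third paragraph: converting the analytic coherence condition into genuine non-degeneracy of the branch masses $T_w|\phi|$ along all words, so that the decomposition is truly paradoxical, and matching this with the precise functional-analytic notion of left-amenability of the $L_1(A)$-action employed in Proposition \ref{pr.Ruellerightamenable}. Controlling two branches on a positive-measure subset of $A\subset J(f)$, where the inverse branches need not be uniformly contracting, is the delicate technical point.
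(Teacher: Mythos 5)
There is a genuine gap, and it begins at the second step: you have misidentified the semigroup $D(f)$. In the paper, $D(f)$ is not generated by single inverse-branch operators $T_i\phi = \phi(g_i)(g_i')^2$. It is the image under the Ruelle representation of the semigroup $S(\Gamma)$ generated by the maps $f_\gamma=\gamma\circ f$ with $\gamma$ in the full deck group $FD(f)$; its generators are the \emph{full} transfer operators twisted by deck transformations, $\rho(f_\gamma)=\gamma_*\circ f_*$, which still sum over all preimages. These satisfy $f_{\gamma_1}\circ f_{\gamma_2}=f_{\gamma_1}^2$, which is precisely why $D(f)$ is a right-amenable ``$D$-semigroup'' (Lemma \ref{lm.LEVINA} and Proposition \ref{pr.Ruellerightamenable}). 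Your operators $T_1,T_2$ are not elements of $D(f)$, so the free subsemigroup $\langle T_1,T_2\rangle$ you build is not a subsemigroup of $D(f)$, and the contrapositive strategy --- exhibiting a finitely generated subsemigroup of $D(f)$ whose action is not left-amenable --- collapses. Indeed, if $D(f)$ generically contained such paradoxical subsemigroups the theorem's hypothesis would be vacuous; the whole point is that the hypothesis is a plausible amenability condition on a right-amenable semigroup, to be used \emph{positively}. (A secondary issue: even with a free subsemigroup in hand, non-amenability of the abstract semigroup does not immediately refute $\rho$-left amenability of one particular bounded representation; by Theorem \ref{th.Day} the latter is strictly weaker.)

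Your first paragraph is sound and essentially reproduces Lemma \ref{lm.invariantBeltrami}: the mass identity plus $|f_*\phi|\le f_*|\phi|$ forces $\bar\phi/|\phi|$ to extend to an invariant Beltrami differential, and ergodicity gives $|\phi|>0$ a.e. But from there the paper goes in the opposite direction. Using the pairing $\langle\mu,h\rangle\neq 0$ and ergodicity it deduces via the separation principle (Proposition \ref{pr.separationprinc}) that $f_*$ is mean-ergodic, hence (Theorem \ref{th.KORNFELIN}, applied to the positive conjugate $T(\phi)=\overline{\mu}f_*(\mu\phi)$) that every finitely generated subsemigroup of $\rho(S(\Gamma))$ is weakly almost periodic. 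The assumed left-invariant mean then produces a functional $\ell$ fixed by the bidual action, and weak almost periodicity upgrades the associated charge $\sigma_\ell$ to a countably additive invariant measure absolutely continuous with respect to Lebesgue, with density a scalar multiple of $h$. The contradiction comes from choosing the finitely generated subsemigroup to contain $f^n$ and $f^n_\gamma$ for an infinite-order $\gamma\in FD(f)$: the density would have to be $\gamma_*$-invariant, yet $\gamma$ acts with a fundamental domain, forcing $h=0$. None of this machinery --- mean ergodicity, weak almost periodicity, the passage from a finitely additive mean to a countably additive measure, and the final use of an infinite-order deck element --- appears in your proposal, and it cannot be bypassed by a paradoxical-decomposition argument inside $D(f)$.
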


As it was observed in \cite{EremLyubPathological}, Sullivan's no invariant line fields conjecture is false for generic transcendental entire maps. 
However, Theorem  \ref{th.leftRuelle} suggests the 
following  version of Sullivan's conjecture for entire maps:

 \textit{There are no ergodic $f$-invariant measurable 
 differential of type $(p,q)\in \mathbb{N}^2$ supported on 
the Julia set of an entire map $f$.}
\medskip

Here a measurable $f$-invariant differential $\mu$ of type 
$(p,q)\in \mathbb{N}^2$ is ergodic whenever $f$ is ergodic on the support of $\mu$. 

Section 2 gives a brief account of the necessary notions and theorems.  
The proofs of the theorems will be given in Section 3, which is organized accordingly 
to the subsections of the introduction.

\section{Preliminaries}
 We include some basic notions and results which will be needed for
 the proofs of the theorems given in the introduction. 
 
\subsection{Amenability of semigroups}
For convenience of the reader, we recall some basic notions 
 and facts of the theory of amenability of semigroups 
 based on the works  \cite{DayMeans} and \cite{DayAmenable}.
A semigroup $S$ acts on itself from the 
left and  the right by $l_\gamma(g)=\gamma  g$ and 
$r_\gamma(g)=g \gamma$ for $\gamma\in S$, respectively. Let 
$$\mu:\mathcal{A}_S\rightarrow \mathbb{\C}$$ be a complex valued function defined on 
the algebra $\mathcal{A}_S$ of all subsets of $S$. Then $\mu$  is a 
finitely additive complex valued measure (or \textit{charge}) if it 
satisfies the following conditions:  $\mu(\emptyset)=0$, 
$\mu(A\cup B)=\mu(A)+\mu(B)$ for disjoint sets $A,B\in \mathcal{A}_S$ 
and $\mu$ has finite total variation over $S$.

\begin{definition} 
A semigroup $S$ is called \emph{right amenable}, if it
possesses a right $S$-invariant  probability charge $\mu$, that is $\mu\geq 0$, 
$\mu(S)=1$ and, for every $A\in \mathcal{A}_S$ and $\gamma \in S$, we have 
$$\mu(r_\gamma^{-1}(A))=\mu(A).$$  Analogously, \emph{left amenability} is 
defined in terms of the left action. Finally, $S$ is called 
\emph{amenable} if it is both left and right amenable.
\end{definition}

By Theorem 1 (IV.5.1, page 258)  in \cite{DunfordSchwartz}, the space of 
finite complex valued charges on $S$ is isometrically 
isomorphic to the dual space of the space $L_\infty(S)$ of all bounded
functions on $S$, equipped with the supremum norm. In this 
situation, a non-negative (right, left)-invariant charge $\mu$ 
determines a non-negative (right, left) invariant 
functional $L_\mu$ which is called either a right, left or both
invariant mean. 

If $S$ is a group, then the notions of left and right amenability are equivalent, but for 
semigroups these notions may be different.  Some basic facts about amenable 
semigroups are the following:

\begin{enumerate}
 \item Every finite group is amenable, but not every finite semigroup is amenable.
 \item Every abelian semigroup is amenable.
 \item Every subgroup of an amenable group is amenable.
 \item Every semigroup is isomorphic to a subsemigroup of an amenable semigroup.
 \item Every semigroup $S$ comes with an opposite semigroup: If $*$ is 
 the product in $S$, then $a*_{op}b=b*a$ is the opposite product. In this way,
 left attributes can be converted into right attributes and vice versa. For instance, 
 the opposite semigroup of a left amenable semigroup is right amenable. 
 \item (Day exhaustion theorem) If the semigroup $S=\bigcup S_n$,  where $S_n$ are 
 subsemigroups such that for every $m,n$ there exists $k$ with $S_m \cup S_n \subset S_k$. 
 Then $S$ is right (left) amenable whenever the semigroups $S_n$ are right (left) amenable for every $n$.
\end{enumerate}
 
Following  \cite{CMAmenabilityII} and \cite{DayMeans} (see also \cite{CMAmenability}), we 
introduce a weaker property than amenability, namely that of 
amenability of bounded representations also known as $\rho$-amenability.

A proper right (left) $S$-invariant subspace $X\subset L_\infty(S)$ 
is called either \textit{right} or \textit{left amenable} whenever $X$ 
contains constant functions and there exists a mean $M$ so that the 
restriction of $M$ to $X$ is an invariant functional for either 
the right or left actions of $S$ on $X$, respectively. In other 
words, $X$ is right (left) $S$-invariant and admits a respective 
invariant non-negative state. Notice that every semigroup admits 
an amenable subspace, for example, the subspace  of constant 
functions is always amenable for every semigroup $S$. 

Now, let $\rho$ be a bounded (anti-)homomorphism from $S$ into $End(B)$, 
where $B$ is a Banach space, and $End(B)$ is the semigroup 
of continuous linear endomorphisms of $B$. Let $B^*$ be 
the dual space of $B$. Given a pair $(b,b^*)\in B\times B^*$ 
consider the function $f_{(b,b^*)}\in L_\infty(S)$ given by
$$f_{(b,b^*)}(s)=b^*(\rho(s)(b)).$$  Let $Y_\rho\subset L_\infty(S)$, 
be the closure of the linear span of the family of functions 
$\{ f_{(b, b^*)} \}$ for all pairs $(b,b^*)\in B\times B^*.$ 
Finally let $X_\rho$ be the space generated by $Y_\rho$ and 
the constant functions. Notice that $X_\rho$ and $Y_\rho$ are
both right and left invariant. 

\begin{definition}
We will say that a bounded (anti-)homomorphism $\rho$ is either 
\emph{right amenable or left amenable}, whenever $X_\rho$ is 
either a right or left amenable subspace of $L_\infty(S)$, 
respectively.  Also we say that $S$ is $\rho$ \emph{right amenable} 
or $\rho$ \emph{left amenable}, whenever $\rho$ has the respective property. 
Equivalently, that the $\rho$-action of $S$ on $B$ is either 
right amenable or left amenable, respectively.
\end{definition}

In general, amenability is a stronger property than $\rho$-amenability. Indeed, we  
recall Day's theorem from \cite{DayMeans}.

\begin{theorem}\label{th.Day} A  semigroup $S$ is right amenable 
(left amenable) if and only if $S$ is $\rho$-right amenable 
($\rho$-left amenable) for every bounded (anti)-representation $\rho$.
\end{theorem}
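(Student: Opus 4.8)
I would prove the equivalence in its right-amenable form; the left-amenable case then follows automatically by passing to the opposite semigroup (fact 5 in the list above), which converts every left attribute into the corresponding right one. Throughout I would use the dictionary recorded just before the definition of right amenability: via the isometric identification of charges with functionals on $L_\infty(S)$, a right invariant probability charge on $S$ is the same datum as a mean $M$ on $L_\infty(S)$ with $M(R_\gamma h)=M(h)$ for all $\gamma\in S$ and $h\in L_\infty(S)$, where $(R_\gamma h)(x)=h(x\gamma)$ is the right translation induced by $r_\gamma$. This is also exactly the right action of $S$ appearing in the definition of a right amenable subspace.

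The forward implication is essentially formal. Assuming $S$ right amenable, I would fix a global right invariant mean $M$ on $L_\infty(S)$. For an arbitrary bounded (anti-)homomorphism $\rho$, the space $X_\rho$ contains the constants and is right $S$-invariant (as recorded in the excerpt), so the restriction $M|_{X_\rho}$ is a well-defined invariant functional; thus $X_\rho$ is a right amenable subspace and $\rho$ is right amenable. Since $\rho$ is arbitrary, this direction is complete.

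For the converse the plan is to feed the hypothesis one cleverly chosen representation, the right regular representation, and read off a global invariant mean. I would set $S^1=S\cup\{1\}$ (adjoining a formal identity), take $B=L_\infty(S^1)$, and define $\rho(s)=R_s$ by $(R_s f)(x)=f(xs)$ for $x\in S^1$. A one-line check gives $R_sR_t=R_{st}$, so $\rho$ is a bounded homomorphism of the original semigroup $S$ into $End(B)$. The crucial step is to determine $X_\rho$ explicitly. On one side, each $f_{(b,b^*)}(s)=b^*(R_s b)$ is bounded in $s$, so $X_\rho\subseteq L_\infty(S)$. On the other side, choosing $b^*=\delta_1$, the evaluation at the adjoined identity, yields $f_{(b,\delta_1)}(s)=(R_s b)(1)=b(s)$, and letting $b$ range over $L_\infty(S^1)$ recovers every element of $L_\infty(S)$; hence $X_\rho=L_\infty(S)$. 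Applying the hypothesis to this $\rho$ gives a mean $M$ on $L_\infty(S)$ whose restriction to $X_\rho$ is right invariant, and since $X_\rho$ is all of $L_\infty(S)$, this restriction is $M$ itself. Thus $M$ is a right invariant mean and $S$ is right amenable.

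The step I expect to be the main obstacle is precisely the explicit computation $X_\rho=L_\infty(S)$, together with the bookkeeping that makes it valid for an arbitrary semigroup. The naive choice $b^*=\delta_e$ only recovers all of $L_\infty(S)$ when $S$ already possesses an identity (and without cancellation one cannot reproduce $b$ from its translates either); the device of adjoining $1$ to form $S^1$, evaluating there, while keeping $\rho$ a representation of the \emph{original} $S$ on $L_\infty(S^1)$, is what circumvents this. I would also have to verify with care that the right action of $S$ on $X_\rho$ in the definition of $\rho$-right amenability coincides on the nose with the translations $R_\gamma$ defining amenability of $S$, so that the mean produced is genuinely right invariant rather than invariant for a conjugate action; this is the point where the left/right conventions must be tracked most carefully.
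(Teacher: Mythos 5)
Your proposal is correct, and in fact it supplies an argument the paper does not contain: the paper states this result as a quotation of Day's theorem (``we recall Day's theorem from \cite{DayMeans}'') and gives no proof, so there is nothing internal to compare against. Your two directions are the standard ones and both check out. The forward direction is, as you say, just restriction of a global right invariant mean to $X_\rho$, which is legitimate because $X_\rho$ is right translation invariant (this follows from $f_{(b,b^*)}\circ r_\gamma=f_{(\rho(\gamma)b,\,b^*)}$ when $\rho$ is a homomorphism, and the analogous identity in the anti-homomorphism case). For the converse, your choice of the right regular representation on $B=L_\infty(S^1)$ with $b^*=\delta_1$ is exactly the right device, and your computation $X_\rho=L_\infty(S)$ is sound: evaluation at the adjoined identity is a bounded functional on $L_\infty(S^1)$, and $f_{(b,\delta_1)}(s)=b(s)$ recovers every bounded function on $S$. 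You are also right to flag the left/right bookkeeping as the delicate point; it works because the ``right action of $S$ on $X_\rho$'' in the paper's definition of an amenable subspace is precomposition with $r_\gamma$, which is precisely the action whose invariant means witness right amenability of $S$. The only friction with the paper's conventions is cosmetic: the definition of an amenable subspace speaks of a \emph{proper} invariant subspace $X\subset L_\infty(S)$, which read literally would exclude your $X_\rho=L_\infty(S)$; this is evidently not the intended meaning (otherwise the ``if'' direction of the theorem could never be invoked), so your argument stands.
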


We state the necessary results from the theory of amenable semigroups 
(see \cite{Paradoxal}).

\begin{lemma}\label{lemma.leftright}
A semigroup $S$ is either right, left or both amenable, whenever
every countable subsemigroup of $S$ is either right, left or both amenable, respectively.
\end{lemma}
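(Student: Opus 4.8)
The plan is to realize right amenability of $S$ as the nonemptiness of an intersection of weak-$*$ closed sets of means, and to verify the finite intersection property using only finitely generated (hence countable) subsemigroups. For $\gamma\in S$ write $R_\gamma f=f\circ r_\gamma$, so that $R_\gamma$ is a positive, unital operator on $L_\infty(S)$; testing on indicator functions shows that a mean $M$ (a state with $M\ge 0$, $M(\mathbf 1)=1$ on $L_\infty(S)$) comes from a right $S$-invariant probability charge exactly when $M\circ R_\gamma=M$ for every $\gamma\in S$. Let $\mathcal M$ denote the set of all means; by Banach--Alaoglu it is a nonempty, weak-$*$ compact, convex subset of $L_\infty(S)^*$. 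For each $\gamma$ set
$$K_\gamma=\{M\in\mathcal M: M(R_\gamma f)=M(f)\text{ for all }f\in L_\infty(S)\}.$$
Since for fixed $f$ the map $M\mapsto M(R_\gamma f-f)$ is weak-$*$ continuous, each $K_\gamma$ is weak-$*$ closed, hence compact. A right-invariant mean is precisely a point of $\bigcap_{\gamma\in S}K_\gamma$, so it suffices to show this intersection is nonempty, and by compactness it is enough to establish the finite intersection property.

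Fix a finite set $F=\{\gamma_1,\dots,\gamma_n\}\subset S$ and let $T=\langle F\rangle$ be the subsemigroup it generates. Then $T$ is countable, so by hypothesis it is right amenable and carries a right-invariant mean $m$ on $L_\infty(T)$. The key point, and the step I expect to carry the weight of the argument, is to lift $m$ to a mean on $L_\infty(S)$ invariant under $R_{\gamma_1},\dots,R_{\gamma_n}$. For this I would use the restriction homomorphism $\pi:L_\infty(S)\to L_\infty(T)$, $\pi(f)=f|_T$, which is positive, unital and of norm one, and set $M=m\circ\pi\in\mathcal M$. The crucial observation is that $\pi$ intertwines the right shifts by elements of $T$: because $T$ is closed under right multiplication by its own elements, for $\gamma\in T$ and $t\in T$ one has $(R_\gamma f)|_T(t)=f(t\gamma)=f|_T(t\gamma)=R_\gamma^{T}(f|_T)(t)$, that is $\pi\circ R_\gamma=R_\gamma^{T}\circ\pi$ on $L_\infty(S)$, where $R_\gamma^{T}$ is the right shift on $L_\infty(T)$. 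Hence for every $\gamma\in T$,
$$M(R_\gamma f)=m\big(\pi(R_\gamma f)\big)=m\big(R_\gamma^{T}(\pi f)\big)=m(\pi f)=M(f),$$
the middle equality being right-invariance of $m$ on $T$. In particular $M\in K_{\gamma_1}\cap\dots\cap K_{\gamma_n}$, so this finite intersection is nonempty.

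Consequently $\{K_\gamma\}_{\gamma\in S}$ has the finite intersection property, and weak-$*$ compactness of $\mathcal M$ produces a point of $\bigcap_{\gamma\in S}K_\gamma$, that is, a right-invariant mean on $S$; thus $S$ is right amenable. The left-amenable case follows by applying the same argument to the opposite semigroup $S_{op}$ recalled among the basic facts in the preliminaries: a subset of $S$ is a subsemigroup for the product of $S$ if and only if it is a subsemigroup for the opposite product, so the countable subsemigroups of $S_{op}$ are exactly the opposites of those of $S$, and left amenability of every countable subsemigroup of $S$ is the same as right amenability of every countable subsemigroup of $S_{op}$. Finally, if every countable subsemigroup of $S$ is both amenable, then all of them are in particular right amenable and all are left amenable, so the two cases already proved show that $S$ is simultaneously right and left amenable, that is, amenable. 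The only delicate points to double-check are that the intertwining identity $\pi\circ R_\gamma=R_\gamma^{T}\circ\pi$ genuinely requires $\gamma\in T$ (it does, since otherwise $t\gamma$ need not lie in $T$) and that $M=m\circ\pi$ is indeed a mean; both are routine.
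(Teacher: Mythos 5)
Your argument is correct. Note that the paper does not prove this lemma at all: it is quoted from the literature (the reference \cite{Paradoxal}) as a known fact about amenable semigroups, so there is no in-paper proof to compare against. What you supply is the standard compactness argument, and it is complete: the set of means is weak-$*$ compact and convex, each $K_\gamma$ is weak-$*$ closed, and the finite intersection property is verified by pulling back a right-invariant mean from the countable (indeed finitely generated) subsemigroup $T=\langle\gamma_1,\dots,\gamma_n\rangle$ along the restriction map $\pi$. The one point that genuinely carries weight --- that $\pi$ intertwines $R_\gamma$ with $R_\gamma^{T}$ only for $\gamma\in T$, because $t\gamma$ must stay in $T$ --- is exactly the point you flag and handle correctly; this is why one must pass to the generated subsemigroup rather than working with the finite set $F$ alone. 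The reduction of the left case to the right case via the opposite semigroup matches the convention the paper itself records in its list of basic facts, and the ``both'' case follows by running the two one-sided cases separately. The only cosmetic remark is that your identification of $R_\gamma$-invariant means with right-invariant probability charges implicitly uses the isometry between charges and $L_\infty(S)^*$ that the paper cites from Dunford--Schwartz; it would be worth one sentence to say so, but nothing is missing mathematically.
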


A semigroup $S$ is left cancellative if for all $a,b,c \in S$, 
the equation  $ca=cb$ implies $a=b$. Analogously, right cancellative
semigroups are defined. A semigroup is cancellative if it is both right 
and left cancellative. For example, every semigroup of non-constant 
entire maps is right cancellative. 

The following theorem is proved in \cite{Donnelly}. 
\begin{theorem}\label{th.Donnelly} Let $S$ be a cancellative semigroup. 
Let $T$ be a subsemigroup of $S$ such that $T$ does not contain 
a free subsemigroup on  two generators. If S is
left amenable, then $T$ is left amenable.
\end{theorem}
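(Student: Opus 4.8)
The plan is to reduce the left amenability of $T$ to the amenability of an associated group of fractions, exploiting that both $S$ and $T$ satisfy the right Ore condition (existence of common right multiples). \emph{First}, I would show that the left amenability of $S$ already forces this Ore condition on $S$. Let $\mu$ be a left-invariant probability charge on $S$ and fix $a,b\in S$. Since $S$ is left cancellative, the set $\{x : ax\in aS\}$ equals all of $S$, so left invariance of $\mu$ under $a$ gives $\mu(aS)=\mu(S)=1$, and likewise $\mu(bS)=1$. If we had $aS\cap bS=\emptyset$, then $\mu(aS)+\mu(bS)\le\mu(S)=1$, a contradiction; hence $aS\cap bS\neq\emptyset$ for all $a,b\in S$. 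Note this uses only left cancellation and the existence of a left mean, not the no-free-subsemigroup hypothesis.

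\emph{Next}, I would obtain the same property for $T$ from the hypothesis, by a purely combinatorial argument. I claim that in a cancellative semigroup, if some pair $a,b$ satisfies $aT\cap bT=\emptyset$, then $\langle a,b\rangle$ is free on $a,b$: from $aT\cap bT=\emptyset$ one first observes $a\neq b$ and $a\notin bT$, $b\notin aT$ (otherwise $aT\subseteq bT$ and the intersection would be nonempty); then, given any relation $w_1(a,b)=w_2(a,b)$ between distinct words, comparing initial letters and repeatedly left-cancelling either shortens the relation or exhibits an element of $aT\cap bT$, which is impossible. Taking the contrapositive, the absence of a free two-generated subsemigroup forces $aT\cap bT\neq\emptyset$ for all $a,b\in T$. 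Thus both $S$ and $T$ are cancellative semigroups satisfying the right Ore condition, so each embeds in its group of right fractions $G(S)=SS^{-1}$ and $G(T)=TT^{-1}$.

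\emph{Then} I would check that $G(T)$ sits inside $G(S)$ as a subgroup. Since $T\subseteq S\subseteq G(S)$ and every element of $S$ is invertible in $G(S)$, the subgroup of $G(S)$ generated by $T$ consists of all words in the elements of $T$ and their inverses; using the Ore condition in $T$ each such word can be put in the normal form $st^{-1}$ with $s,t\in T$, and cancellativity in $S$ guarantees that two such fractions agree in $G(S)$ exactly when they agree as fractions over $T$. Hence the canonical map $G(T)\to G(S)$ is an injective group homomorphism.

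\emph{Finally}, I would invoke the known correspondence for cancellative right-Ore semigroups: such a semigroup is left amenable if and only if its group of right fractions is amenable. Applying the forward direction to $S$ shows $G(S)$ is amenable; since $G(T)$ embeds as a subgroup of $G(S)$, it is amenable as well (every subgroup of an amenable group is amenable); and applying the reverse direction to $T$ then yields that $T$ is left amenable, as desired. \emph{The main obstacle} I anticipate is precisely this last correspondence: one must pin down the left/right conventions so that \emph{left} amenability matches amenability of the \emph{right}-fraction group (both reversibility computations above were tailored to keep this consistent, and the reverse direction genuinely needs the Ore hypothesis, as the free semigroup $\langle 2z,2z+1\rangle$ inside the amenable group $\mathrm{Aff}(\C)$ shows that it fails without it). Verifying the functorial embedding $G(T)\le G(S)$ and the boundary cases of the word-cancellation lemma is then routine; if desired, one may first pass to countable subsemigroups via Lemma \ref{lemma.leftright} to keep the fraction groups under control, though this is not essential.
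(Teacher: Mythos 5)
The paper does not actually prove this statement: it is imported verbatim from \cite{Donnelly} and used as a black box, so there is no in-paper argument to compare against. Judged on its own, your proof is correct and uses precisely the machinery the paper itself assembles in Section 3.1 for Theorem \ref{th.polysemi}: (i) a left-invariant probability charge gives $\mu(aS)=\mu\bigl(l_a^{-1}(aS)\bigr)=\mu(S)=1$ for every $a$, so two right ideals cannot be disjoint and a cancellative left amenable semigroup is right reversible, with amenable group of right fractions (this equivalence is the one the paper quotes from \cite{GranirerExtremely}); (ii) in a cancellative semigroup $aT\cap bT=\emptyset$ forces $\langle a,b\rangle$ to be free of rank two, so the hypothesis on $T$ makes $T$ right reversible; (iii) $TT^{-1}$ is a subgroup of the amenable group $SS^{-1}$ by the uniqueness of the fraction group, and amenability passes back down to left amenability of $T$. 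Your identification of the left/right convention issue as the main pitfall is apt, and you resolve it consistently. The one spot where the sketch is thinner than it should be is the freeness lemma in (ii): when one word is a proper prefix of the other you face a relation of the form $c=cv$ with $v$ a nonempty word, and you cannot ``keep left-cancelling'' because there is no identity to cancel down to; the standard fix is to right-multiply first, obtaining $cv=cv^2$, hence $v=v^2$ and then $x=vx$ for every $x\in T$, which again produces an element of $aT\cap bT$. (Your preliminary observation that $a\notin bT$ and $b\notin aT$ is what makes this final contradiction bite.) This is routine but should be written out; with it included the argument is complete.
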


Let us recall some results due to Klawe (see \cite{Klawe}) on semidirect 
product of semigroups. If $S,T$ are two semigroups 
with a homomorphism $\psi:S\rightarrow End(T)$, then the 
cartesian product  $T\times S$ possess a semigroup structure, called the 
\textit{semidirect product} $T\rtimes_\psi S$ given by the product $$(t_1,s_1)*(t_2,s_2)=(t_1\psi(s_1)(t_2),
s_1s_2).$$

We summarize some results in \cite{Klawe} in the following proposition.

\begin{proposition}\label{LemmaKlawe}
Let $G$ and $S$ be semigroups and $\psi:S\rightarrow End(G)$ be a homomorphism. 
\begin{itemize}
\item If  $G$ and $S$ are right amenable, then  
$G\rtimes_\psi S$ is right amenable; 

\item If $G$ and $S$ are  amenable and $\psi(s)$ is 
a surjection  for every $s\in S$, then  $G\rtimes_\psi S$ 
is an amenable semigroup.
\end{itemize}
\end{proposition}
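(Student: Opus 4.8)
The plan is to realise an invariant mean on $G\rtimes_\psi S$ as an iterated mean, integrating first over the $G$-coordinate and then over the $S$-coordinate, and to let the hypotheses on $\psi$ dictate which means on $G$ must be chosen. Throughout I identify $L_\infty(G\rtimes_\psi S)$ with the bounded functions on $G\times S$, and for $(t,s)\in G\rtimes_\psi S$ I read off from the product $(t_1,s_1)\ast(t_2,s_2)=(t_1\psi(s_1)(t_2),s_1s_2)$ the right and left translates $(R_{(t,s)}f)(x,\sigma)=f(x\,\psi(\sigma)(t),\sigma s)$ and $(L_{(t,s)}f)(x,\sigma)=f(t\,\psi(s)(x),s\sigma)$. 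Given means $M_G$ on $L_\infty(G)$ and $M_S$ on $L_\infty(S)$, set $\Phi_f(\sigma)=M_G\big(x\mapsto f(x,\sigma)\big)$ and $M(f)=M_S(\Phi_f)$; this is a mean, and the whole question is when $M$ inherits invariance.

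For the first bullet I would choose $M_G\in RIM(G)$ and $M_S\in RIM(S)$. Fixing $\sigma$, the inner function $x\mapsto f(x\,\psi(\sigma)(t),\sigma s)$ is the right translate by the element $\psi(\sigma)(t)\in G$ of $x\mapsto f(x,\sigma s)$, so right invariance of $M_G$ gives $M_G\big(x\mapsto(R_{(t,s)}f)(x,\sigma)\big)=\Phi_f(\sigma s)$. Hence $\sigma\mapsto M_G\big(x\mapsto(R_{(t,s)}f)(x,\sigma)\big)$ is the right $S$-translate of $\Phi_f$ by $s$, and right invariance of $M_S$ yields $M(R_{(t,s)}f)=M_S(\Phi_f)=M(f)$. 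No hypothesis on $\psi$ is needed, since the twist enters only as an (arbitrary) right translation inside $G$, which $M_G\in RIM(G)$ absorbs.

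The second bullet is the delicate one, and the left action is the obstacle: the inner function now reads $x\mapsto f(t\,\psi(s)(x),s\sigma)$, which is the precomposition with $\psi(s)$ of the left translate $x\mapsto f(t\,x,s\sigma)$. Thus left invariance of the iterated mean requires $M_G(\phi\circ\psi(s))=M_G(\phi)$ for every $s$, i.e. invariance of $M_G$ under the pullbacks $\psi(s)^\ast\colon\phi\mapsto\phi\circ\psi(s)$. Here surjectivity of each $\psi(s)$ enters twice. First, the identity $(R^G_g\phi)\circ\psi(s)=R^G_{g'}(\phi\circ\psi(s))$, valid whenever $g=\psi(s)(g')$, together with its left-hand analogue, reduces invariance of $M\circ\psi(s)^\ast$ to that of $M$; so if $M\in RIM(G)\cap LIM(G)$ then $M\circ\psi(s)^\ast$ lies in the same set. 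Consequently each $\psi(s)^\ast$ restricts to a weak$^\ast$-continuous affine self-map of the non-empty compact convex set $K=RIM(G)\cap LIM(G)$, and since $\psi$ is a homomorphism the assignment $s\mapsto\psi(s)^\ast$ is an anti-homomorphism, that is an action of $S^{op}$ on $K$. As $S$ is amenable, so is $S^{op}$ (bullet 5 of the basic facts), and the fixed-point property of amenable semigroups (\cite{DayMeans}) produces a common fixed point $M_G\in K$ with $M_G\circ\psi(s)^\ast=M_G$ for all $s$. With this $M_G$ and any $M_S\in RIM(S)\cap LIM(S)$, the first computation gives right invariance of $M$, while the left computation together with $M_G\circ\psi(s)^\ast=M_G$ gives left invariance, so $M$ is two-sided invariant and $G\rtimes_\psi S$ is amenable. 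I expect the only genuinely subtle step to be verifying the hypotheses of the fixed-point theorem for this $S^{op}$-action on $K$, and checking that surjectivity of $\psi(s)$ is exactly what keeps the pullbacks $\psi(s)^\ast$ inside the set of two-sided invariant means.
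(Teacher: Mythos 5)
Your argument is correct, but it is worth noting that the paper does not prove this proposition at all: it is stated as a summary of results from \cite{Klawe} and used as a black box, so there is no in-paper proof to compare against. Your iterated-mean construction $M(f)=M_S\bigl(\sigma\mapsto M_G(x\mapsto f(x,\sigma))\bigr)$ is the standard route and does establish both bullets: the computation for right invariance is exactly right (the twist $\psi(\sigma)(t)$ is absorbed as an ordinary right translation in $G$, so no condition on $\psi$ is needed), and for the second bullet you correctly isolate the real issue, namely producing an $M_G$ that is simultaneously two-sided invariant and fixed by every pullback $\psi(s)^\ast$, with surjectivity being precisely what makes $\psi(s)^\ast$ preserve invariance via $g=\psi(s)(g')$. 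Two small points you should tighten. First, the assignment $s\mapsto T_s$, $T_s(M)=M\circ\psi(s)^\ast$, is in fact a homomorphism, not an anti-homomorphism: $T_{s_1s_2}(M)(\phi)=M(\phi\circ\psi(s_1)\circ\psi(s_2))=T_{s_1}(T_{s_2}(M))(\phi)$, so you have a genuine left action of $S$ on $K$ and should invoke Day's fixed point theorem for left amenable semigroups directly; since $S$ is two-sided amenable this slip is harmless, but as written the bookkeeping is off. Second, the non-emptiness of $K=RIM(G)\cap LIM(G)$ is not the definition of amenability used in the paper (which only asserts that $LIM(G)$ and $RIM(G)$ are each non-empty); you need Day's theorem that a semigroup admitting both a left and a right invariant mean admits a two-sided invariant mean, and the same remark applies to your choice of $M_S\in RIM(S)\cap LIM(S)$. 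With those two citations made explicit, your proof is a complete and self-contained replacement for the reference to \cite{Klawe}.
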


\subsection{Ruelle and Koopman representations}

In this subsection, we discuss two important representations (actions)
in dynamics: the pull-back and push-forward actions on the space of continuous functions and
the space of integrable measurable quadratic forms, respectively, these are known as 
the Koopman and  Ruelle representations, respectively. 
First, we consider  the Koopman representation for entire maps. 

By the Krylov-Bogolyubov theorem, any continuous endomorphism $f$ of a compact 
Hausdorff space $X$ admits a finite invariant measure which defines a 
$K_f$-invariant continuous linear functional on $C(X)$, where $K_f$ 
is the Koopman operator (or composition operator) defined by 
$K_f(\phi)=\phi\circ f$. 

When compactness fails, such as in the case of  the complex plane $\C$,  
then a continuous endomorphism of $\C$ may or may not admit a 
finite non-atomic invariant measure on $\C$. For example, the 
map $z\mapsto z+1$ does not admit finite invariant measures on $\C$. 

Therefore, as in the group setting, we define Koopman 
representation as a special subclass 
of $\rho$-representations into the semigroup of linear 
continuous endomorphisms of the space $C(X)$ of continuous complex 
functions over a compact Hausdorff space $X$.

Given a compact Hausdorff space $X$, let $E(X)$ be the semigroup
of continuous endomorphisms of $X$. Then the representation $\rho_X:E(X)\rightarrow End(C(X))$ 
given by  $$\rho_X(f)=K_f,$$ where $K_f(\phi)=\phi(f)$
is called the \emph{Koopman (anti)-representation}.

\begin{definition}
For an abstract semigroup $S$, we will say that $S$ admits a \emph{Koopman
representation} $\rho$ if there exists a compact Hausdorff space $X$
and a monomorphism $i:S\rightarrow E(X)$ such that $\rho=\rho_X\circ i.$
\end{definition}

For semigroups $S$ of entire maps, natural examples of Koopman 
representations are continuous extensions of elements from $S$ to 
a suitable compactification of $\C$. We say that a compactification 
$\tilde{\C}$ is an \textit{admissible compactification} of the complex plane $\C$ 
for the semigroup $S$ whenever $\tilde{\C}$ is a Hausdorff 
compact space and the action $S$ on $\C$ continuously 
extends to a continuous action on $\tilde{\C}$ such that the 
correspondence $f\mapsto \tilde{f}$ is an isomorphism. 
The one point compactification  $\overline{\C}$ of $\C$ is 
an example of an admissible compactification for the semigroup of
polynomials $Pol(\C)$. Another example of an admissible compactification for $Pol(\C)$ is
the compactification of $\C$ by a circle at infinity.

By its universal property, the Stone-\v{C}ech compactification of $\C$
is an admissible compactification for the whole semigroup $Ent(\C)$. 
Perhaps, as a consequence of Picard theorem, $\beta(\C)$ is the unique, up to 
homeomorphism, admissible  compactification for $Ent(\C)$.

Next, we define the  Ruelle representation.

\begin{definition} Let $f$ be an entire map in the class $\mathcal{R}$.
Then $f$ acts on $L_1(\C)$ as a ``push-forward map", more precisely, 
as the linear continuous operator given by the formula  
$$f_*(\phi)(x)=\sum_{y\in f^{-1}(x)} \frac{\phi(y)}{(f')^2(y)},$$  
which is called the \emph{Ruelle transfer operator} or just the \emph{Ruelle operator}. 
\end{definition}

The correspondence  $$f \mapsto f_* $$
defines an isomorphism onto its image, whenever it exists, and  is called the 
\textit{Ruelle representation}. 

The operator $B_f: L_{\infty}(\C) \rightarrow L_{\infty}(\C)$ 
given by $$B_f(\varphi)=\varphi(f) \frac{\overline{f'}}{f'}$$ 
is the pull-back map on Beltrami forms and we call it the 
\textit{Beltrami operator} and is also a linear continuous 
endomorphism of $L_\infty(\C)$ with $\|B_f\|_\infty=1$. 
The set of fixed points $Fix(B_f)\subset L_\infty(\C)$ is called 
the space of invariant Beltrami differentials. By quasiconformal
theory, the open unit ball of $Fix(B_f)$ generates all 
quasiconformal deformations of the map $f.$ 

The relationship between the Ruelle and Beltrami operators in class $\mathcal{R}$ is given by the 
following proposition (see \cite{DomMakSien} and \cite{MakRuelle}).

\begin{proposition}\label{pr.RuelleBeltrami}
Let $f$ be an entire map in class $\mathcal{R}$, then 
 $f_*$ defines a linear continuous endomorphism 
of $L_1(\C)$ with norm $\|f_*\|_1\leq 1$ and $B_f$ 
is the dual operator to  $f_*$.
\end{proposition}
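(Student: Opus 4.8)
The plan is to derive both conclusions—that $f_*$ is a contraction on $L_1(\C)$ and that $B_f=(f_*)^*$—from a single sheetwise change-of-variables computation, with the hypothesis $f\in\mathcal{R}$ supplying exactly the geometric structure needed to control the (a priori infinite) sum over preimages. Throughout I write $dm$ for planar Lebesgue measure and use the bilinear pairing $\langle\phi,\varphi\rangle=\int_\C\phi\varphi\,dm$, under which $L_\infty(\C)$ is the dual of $L_1(\C)$ and the product $\phi\,dz^2\cdot\varphi\,\tfrac{d\bar z}{dz}$ is the area form $\phi\varphi\,dz\,d\bar z$.

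First I would unfold $f$ into sheets. Since $f\in\mathcal{R}$, its singular set lies on a rectifiable ray $R$, so $\C\setminus R$ is simply connected and contains no critical or asymptotic values; consequently $f\colon \C\setminus f^{-1}(R)\to\C\setminus R$ is an \emph{unbranched} covering map. A covering of a simply connected base is trivial, so there is a countable family of single-valued holomorphic branches $\{g_j\}$ of $f^{-1}$ on $\C\setminus R$, whose images $\Omega_j=g_j(\C\setminus R)$ partition $\C\setminus f^{-1}(R)$. Because $R$ is rectifiable it has planar measure zero, and since $f$ is non-constant its critical set is discrete, whence $f^{-1}(R)$ is also of measure zero. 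Thus for a.e.\ $x$ the fibre is exactly $f^{-1}(x)=\{g_j(x)\}_j$, and on each $\Omega_j$ the map $f$ is a biholomorphism with real Jacobian $|f'|^2$.

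Next I would prove well-definedness and the norm bound simultaneously by applying Tonelli's theorem to $|\phi|$. Substituting $x=f(y)$ on each sheet, so that $dm(x)=|f'(y)|^2\,dm(y)$, gives
\[
\int_\C \sum_{y\in f^{-1}(x)}\frac{|\phi(y)|}{|f'(y)|^2}\,dm(x)
=\sum_j\int_{\Omega_j}|\phi(y)|\,dm(y)
=\|\phi\|_1 .
\]
By Tonelli the defining series converges absolutely for a.e.\ $x$, the function $f_*(\phi)$ is measurable, and the triangle inequality yields $\|f_*(\phi)\|_1\le\|\phi\|_1$; hence $f_*$ is a continuous endomorphism of $L_1(\C)$ with $\|f_*\|_1\le 1$.

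Finally, with absolute convergence secured I would use Fubini to pair $f_*(\phi)$ against an arbitrary $\varphi\in L_\infty(\C)$. The same sheetwise substitution, together with $|f'|^2/(f')^2=\overline{f'}/f'$, gives
\[
\langle f_*(\phi),\varphi\rangle
=\int_\C \phi(y)\,\varphi(f(y))\,\frac{\overline{f'(y)}}{f'(y)}\,dm(y)
=\langle \phi,B_f(\varphi)\rangle .
\]
Since $B_f$ is already a bounded endomorphism of $L_\infty(\C)=L_1(\C)^*$, this identity says precisely that $B_f=(f_*)^*$, finishing the proof. The main obstacle is the covering claim in the unfolding step: for a general transcendental map the infinitely many sheets cannot be globally parametrized by single-valued branches, and the interchange of summation and integration would be unjustified. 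This is exactly where class $\mathcal{R}$ is indispensable, as removing \emph{all} critical and asymptotic values onto $R$ simultaneously eliminates the branching and prevents sheets from escaping to infinity, making $f$ a genuine trivial covering over the simply connected set $\C\setminus R$.
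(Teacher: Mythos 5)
Your proof is correct, and it is the standard argument: the paper itself does not prove this proposition but defers to the cited references, and the sheetwise change-of-variables over $\C\setminus R$ that you use (complete set of single-valued branches of $f^{-1}$, Tonelli for the contraction bound, Fubini for the duality $\langle f_*\phi,\varphi\rangle=\langle\phi,B_f\varphi\rangle$) is exactly the computation the paper employs later in the proof of Lemma~\ref{lm.invariantBeltrami}. Your closing remark also correctly identifies why the class $\mathcal{R}$ is needed, namely that putting all critical and asymptotic values on the closed ray $R$ makes $f$ an unbranched (hence, over the simply connected set $\C\setminus R$, trivial) covering, which is what legitimizes the interchange of sum and integral.
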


This proposition may hold true in a more  general setting but is not 
clear how to proceed, for instance, in the case when the singular set 
has positive measure or, even, when the singular set is the whole plane. 
Thence, we introduced the class $\mathcal{R}$ by technical reasons. 

Following \cite{KornfeldLin} and \cite{Krengel}, 
we summarize the basic properties of dynamics of 
linear contractions acting on Banach spaces. For a 
linear endomorphism $L$ of a Banach space $B$, the 
\textit{C\'esaro averages} $C_n(L)$ are defined for $x\in B$ by

$$C_n(L)(x)=\frac{1}{n}\sum_{i=0}^{n-1}L^i(x).$$

\begin{definition}
An operator $L$ on a Banach space $B$ is called \emph{mean-ergodic} if
there exists $M>0$ such that
    $$\|L^n\|\leq M<\infty,$$ for every $n\geq 0$, 
and the C\'esaro averages $C_n(L)$ converges in norm for 
every element of $B$.
\end{definition}

The following characterization of mean-ergodicity can be 
found in \cite{Krengel}.

\begin{proposition}[Separation Principle]\label{pr.separationprinc}
An operator $L$ acting on a Banach space $B$ is mean ergodic 
if and only if it satisfies the principle of separation of fixed points:

For every  $x^*\in B^*$ fixed by $L^*$, the dual operator to $L$, 
there exists $y\in B$ a fixed point of $L$ such 
that $(x^*,y)=x^*(y)\neq 0.$
\end{proposition}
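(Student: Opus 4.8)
The plan is to prove both implications via the standard decomposition attached to the Cesàro averages, following Sine's approach. Throughout I would take $L$ to be power-bounded, $\sup_{n}\|L^n\|=M<\infty$, as required by the definition of mean-ergodicity; this yields the uniform bound $\|C_n(L)\|\le M$ for all $n$, so that strong convergence of $C_n(L)$ on a dense subspace of $B$ already forces strong convergence on all of $B$ by the usual $\varepsilon/3$ argument. Write $F=Fix(L)=\ker(I-L)$ and $N=\overline{(I-L)B}$. Two elementary facts drive everything: first, $C_n(L)x=x$ for every $x\in F$; second, for $x=(I-L)z$ one has $C_n(L)x=\frac{1}{n}(z-L^nz)$, which tends to $0$ because $\|L^nz\|\le M\|z\|$, and hence, by the bound $\|C_n(L)\|\le M$, the averages converge to $0$ on all of $N$. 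Consequently $C_n(L)$ converges on the subspace $F+(I-L)B$, and thus on its closure.

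For the forward implication, I would assume $L$ is mean-ergodic and let $P$ be the strong limit of $C_n(L)$. From the identity $LC_n(L)-C_n(L)=\frac1n(L^n-I)\to 0$ in operator norm one gets $LP=P$, so $P$ maps into $F$; since $P$ acts as the identity on $F$, it is a bounded projection with range $F$. Now take $x^\ast\in B^\ast$ with $L^\ast x^\ast=x^\ast$ and $x^\ast\neq 0$. Taking adjoints, $C_n(L)^\ast=C_n(L^\ast)$ and $C_n(L^\ast)x^\ast=x^\ast$; passing to the limit gives $x^\ast(Px)=x^\ast(x)$ for all $x\in B$. Choosing $x$ with $x^\ast(x)\neq 0$ and setting $y=Px\in F$ yields $x^\ast(y)=x^\ast(x)\neq 0$, which is exactly the separation property.

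For the converse, I would assume the separation property and, using the reduction above, reduce the problem to showing that $F+(I-L)B$ is dense in $B$. Suppose it were not; then Hahn-Banach gives a nonzero $x^\ast\in B^\ast$ annihilating $F+(I-L)B$. Annihilating $(I-L)B$ means $x^\ast((I-L)z)=0$ for all $z$, i.e. $(L^\ast x^\ast)(z)=x^\ast(z)$ for all $z$, so $L^\ast x^\ast=x^\ast$; thus $x^\ast$ is a nonzero fixed point of $L^\ast$. The separation hypothesis then furnishes $y\in F$ with $x^\ast(y)\neq 0$, contradicting that $x^\ast$ annihilates $F$. Hence $F+(I-L)B$ is dense, and the uniform bound promotes convergence of $C_n(L)$ to all of $B$, so $L$ is mean-ergodic.

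The routine parts are the two elementary convergence facts and the $\varepsilon/3$ promotion step; the conceptual core, and the part I would treat most carefully, is the duality in the converse direction — identifying the annihilator of $(I-L)B$ with $Fix(L^\ast)$ and closing the loop by Hahn-Banach — together with verifying that the strong limit $P$ in the forward direction is genuinely a projection onto $F$ and that adjoints of Cesàro averages behave as claimed. Beyond this bookkeeping I expect no serious obstacle, since the statement is Sine's mean ergodic theorem and power-boundedness is available throughout.
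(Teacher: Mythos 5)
Your proof is correct and is precisely the standard argument for Sine's mean ergodic theorem — the decomposition $B=\overline{F+(I-L)B}$, the projection $P=\lim C_n(L)$ onto $F=\ker(I-L)$, and the Hahn--Banach duality identifying the annihilator of $(I-L)B$ with $\mathrm{Fix}(L^*)$ — which matches the source the paper cites for this proposition (the paper gives no proof of its own, referring to \cite{Krengel}). Your explicit standing hypothesis of power-boundedness is also the right reading of the statement: without it the separation property can hold vacuously (e.g.\ $L=2\,\mathrm{Id}$ on $\C$ has no nonzero fixed functional) while the Ces\`aro averages diverge, so the equivalence is only meaningful under the bound $\sup_n\|L^n\|<\infty$ built into the paper's definition of mean-ergodicity.
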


An operator $L$ acting on a Banach space $B$ with $\|L\|\leq 1$ 
is called \textit{weakly almost periodic} whenever the sequence
$\{L^n(x)\}$ forms a weakly sequentially precompact 
subset of $B$, for every $x\in B.$

The following theorem is proved in \cite{KornfeldLin}.

\begin{theorem}\label{th.KORNFELIN}
Let $L$ be a positive operator with $\|L\|\leq 1$ acting on $L_1(X,\mu)$ 
space. Then, $L$ is weakly almost periodic if and only if $L$ is mean ergodic. 
\end{theorem}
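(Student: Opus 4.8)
The plan is to prove the two implications separately, using the Separation Principle (Proposition \ref{pr.separationprinc}) for the forward direction and the Dunford--Pettis characterization of weak compactness in $L_1$ for the reverse one. Throughout, power-boundedness is automatic since $\|L^n\|\le\|L\|^n\le 1$, and by the Dunford--Pettis theorem the assertion that $\{L^n x\}$ is weakly sequentially precompact is equivalent to the uniform integrability of the orbit, together with tightness when $\mu$ is infinite. Because $L$ is positive, $L^n x^{\pm}\ge 0$ and $L^n x=L^n x^+-L^n x^-$, so it suffices to treat $x\ge 0$; for such $x$ one also notes that $\int L^n x\,d\mu=\int x\,(L^*)^n\mathbf 1\,d\mu$ is nonincreasing, since $L^*$ is a positive contraction on $L_\infty$ with $0\le L^*\mathbf 1\le\mathbf 1$, and hence converges to some $\alpha\ge 0$.

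First I would show that weak almost periodicity implies mean ergodicity; this holds on any Banach space and uses no positivity. Let $x^*\in L_\infty(X,\mu)=(L_1)^*$ be nonzero with $L^*x^*=x^*$, and pick $x$ with $x^*(x)\ne 0$. Since $(C_n(L))^*=C_n(L^*)$ and $x^*$ is fixed, $x^*(C_n(L)x)=x^*(x)\ne 0$ for every $n$. Weak almost periodicity makes $\{L^n x\}$, hence its closed convex hull and so $\{C_n(L)x\}$, relatively weakly compact; by Eberlein--\v{S}mulian I extract $C_{n_k}(L)x\to y$ weakly. Then $x^*(y)=x^*(x)\ne 0$, while $(L-I)y=\lim_k\tfrac1{n_k}(L^{n_k}x-x)=0$ weakly, so $Ly=y$. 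Thus the separation of fixed points holds, and Proposition \ref{pr.separationprinc} delivers mean ergodicity.

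The substantial implication is that a mean ergodic positive contraction is weakly almost periodic, and here the order structure of $L_1$ is indispensable. Fix $x\ge 0$; mean ergodicity gives $C_n(L)x\to Px$ in norm, where $P$ is a positive projection onto $\mathrm{Fix}(L)$. Since $L|f|\ge|Lf|$ and $\int L|f|\le\int|f|$, one checks $L|f|=|f|$ whenever $Lf=f$, so $\mathrm{Fix}(L)$ is a sublattice carrying a maximal nonnegative fixed function $u$, whose support $Y$ is the conservative part of the Hopf decomposition $X=Y\sqcup Z$. On $Y$ the invariant density $u\,d\mu$ turns a suitable restriction of $L$ into a Markov operator on a finite measure space, where the classical mean ergodic theorem forces uniform integrability of the orbit. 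On $Z$ I would argue by contradiction: if $\{L^n x\}$ failed to be uniformly integrable or tight, a gliding-hump procedure extracts subsets $A_k$ carrying a fixed amount of mass $\ge\varepsilon$ yet escaping (either $\mu(A_k)\to 0$, or $A_k$ leaving every set of finite measure). Using positivity to prevent the humps from cancelling, this produces a weakly wandering pattern along which $C_n(L)x$ oscillates by a fixed amount, contradicting the norm convergence guaranteed by mean ergodicity. Assembling the two parts shows every orbit is relatively weakly compact.

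The main obstacle is exactly this last step: upgrading uniform integrability of the Cesàro averages $C_n(L)x$ --- which is automatic, since a norm-convergent sequence in $L_1$ is relatively weakly compact --- to uniform integrability of the individual iterates $L^n x$. This upgrade is false without positivity, so the gliding-hump/weakly-wandering construction must exploit both the positivity of $L$ and the lattice structure of $L_1$; it is the technical heart of the argument, with the Hopf decomposition serving to localize the problem to the dissipative part $Z$, where the escaping-mass contradiction can be set up.
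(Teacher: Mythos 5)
The paper contains no proof of this theorem to compare against: it is quoted from \cite{KornfeldLin} with the single line ``The following theorem is proved in \cite{KornfeldLin}'', so your attempt must stand on its own. Your forward direction (weakly almost periodic implies mean ergodic) is correct and complete: contractivity gives power-boundedness, relative weak compactness of the orbit passes to the closed convex hull by the Krein--\v{S}mulian theorem, Eberlein--\v{S}mulian extracts a weak limit $y$ of $C_{n_k}(L)x$, weak-weak continuity of $L-I$ together with $(L-I)C_n(L)=\frac{1}{n}(L^n-I)$ shows $Ly=y$, and $x^*(y)=x^*(x)\neq 0$ verifies the separation principle of Proposition \ref{pr.separationprinc}. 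This is the classical Eberlein argument and, as you correctly note, needs no positivity.

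The converse is the entire content of the theorem --- and the direction this paper actually invokes in the proof of Theorem \ref{th.leftRuelle} --- and there your proposal has a genuine gap that you flag but do not close. Two concrete problems. First, the support $Y$ of the maximal nonnegative fixed function is \emph{not} in general the conservative part of the Hopf decomposition: a conservative positive contraction (e.g.\ one induced by a conservative infinite-measure-preserving transformation) can have $\mathrm{Fix}(L)=\{0\}$ while its conservative part is everything, so the identification you assert would itself require an argument using the mean ergodicity hypothesis. (On $Y$ your conclusion is right, though the working mechanism is domination, not ``the classical mean ergodic theorem'': orbits of $x$ with $0\le x\le cu$ stay in the weakly compact order interval $[0,cu]$, and general $x\in L_1(Y)_+$ is norm-approximated by $x\wedge nu$.) Second, and decisively, the contradiction you propose on $Z$ does not follow as sketched: a hump of mass $\varepsilon$ carried by $L^{n_k}x$ along a sparse sequence $n_k$ contributes only $O(1/n)$ to $C_n(L)x$, so escaping mass in \emph{individual} iterates is invisible to the norm convergence of the \emph{averages}. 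This is exactly the upgrade --- from uniform integrability of $\{C_n(L)x\}$, which is automatic, to uniform integrability of $\{L^nx\}$ --- that you yourself identify as false without positivity; but your gliding-hump outline supplies no mechanism by which positivity forces the escaping mass to persist over a positive density of times, which is what would be needed to disturb $C_n(L)x$. That persistence argument is the technical heart of Kornfeld and Lin's proof and is precisely what is missing: as written, the hard implication is a plan, not a proof.
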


\subsection{Formal series, relations and functional equations }

Recall that $\mathfrak{S}$ stands for the semigroup of formal series 
in one variable with complex coefficients of the form 
$$a_1z+a_2z^2+a_3z^3+...$$ equipped by substitution as operation
and the topology of convergence on coefficients and $\Gamma$ is  the  
subgroup of all invertible elements in $\mathfrak{S}.$

The following fact is showed in \cite{Scheinberg}. For every $g\in \mathfrak{S}$ 
and $\gamma\in \Gamma$, let $g_\gamma=\gamma \circ g \circ \gamma^{-1}$.

\begin{proposition}\label{pr.SeriesLineariz}
For every $g=a_1z+a_2z^2+...\in \mathfrak{S}$ there exists $\gamma \in \Gamma$ 
such that the following statements are true:

\begin{enumerate}
    \item $g_\gamma=a_1z$ whenever $a_1\neq 0$ and either $a_1$ is not a root of unit 
    or $a_1^k=1$ is a primitive $k$-root with $g^k=id.$
    \item $g_\gamma=z^m$ with $m\geq 2$ and $m$ is the minimal natural number $m$ such that $a_m\neq 0$.
    \item $g_\gamma=a_1z+b_1z^n+c_1z^{2n-1}$, if $a_1$ is a primitive $k$-root of unity so 
    that $g_\gamma^k=z+z^n+cz^{2n-1}$ with $b_1=\frac{1}{ka_1}, \, c_1=\frac{ca_1}{k}-\frac{n(k-1)}{2k^2a_1^3}$ 
    where $n-1$ is a multiple of $k$.
\end{enumerate}
\end{proposition}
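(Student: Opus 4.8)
The plan is to prove the statement by dividing into cases according to the linear coefficient $a_1$, in each case solving the conjugacy equation $\gamma\circ g=g_\gamma\circ\gamma$ coefficient by coefficient. Writing $\gamma(z)=c_1z+c_2z^2+\cdots$ with $c_1\neq0$, the solvability of this equation at each degree is governed by \emph{resonances}: conjugating $g$ by an elementary transformation $z\mapsto z+tz^{j}$ changes the coefficient of $z^{j}$ in $g_\gamma$ by $t\,(a_1^{\,j}-a_1)$, so a monomial $z^{j}$ can be removed exactly when $a_1^{\,j-1}\neq1$. This single mechanism organizes the whole argument: the \emph{resonant} monomials, which survive in the normal form, are those with $a_1^{\,j-1}=1$.

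First I would dispose of the two non-resonant cases. If $a_1=0$, let $m\geq2$ be minimal with $a_m\neq0$; then I would invoke the formal B\"ottcher theorem, solving $\gamma(g(z))=\gamma(z)^{m}$ recursively: the leading relation is $c_1^{\,m-1}=a_m$, which fixes $c_1\neq0$, and each higher coefficient of $\gamma$ is then determined by a linear equation with nonzero leading term, giving $g_\gamma=z^{m}$ and statement (2). If $a_1\neq0$ is not a root of unity, I would solve the linearization equation $\gamma(g(z))=a_1\gamma(z)$ degree by degree; at degree $j\geq2$ the unknown $c_j$ appears with coefficient $a_1^{\,j}-a_1=a_1(a_1^{\,j-1}-1)\neq0$, so every coefficient is uniquely solvable and $g_\gamma=a_1z$, the first alternative of statement (1). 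There is no small-divisor issue here, since only non-vanishing of $a_1^{\,j-1}-1$, not a quantitative bound, is needed.

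Next comes the case where $a_1$ is a primitive $k$-th root of unity. If in addition $g^{k}=\mathrm{id}$, linearization is achieved directly by averaging: the series $\gamma=\tfrac1k\sum_{j=0}^{k-1}a_1^{-j}\,g^{\circ j}$ has linear part $z$, hence lies in $\Gamma_1\subset\Gamma$, and a reindexing of the sum using $g^{k}=\mathrm{id}$ and $a_1^{-k}=1$ gives $\gamma\circ g=a_1\gamma$, so $g_\gamma=a_1z$, the second alternative of statement (1). The remaining, and genuinely hard, case is $g^{k}\neq\mathrm{id}$: then $h:=g^{k}$ is tangent to the identity, $h(z)=z+\alpha z^{n}+\cdots$ with $n\geq2$ and $\alpha\neq0$. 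Here I would first use the resonance mechanism above to remove from $g$ all monomials $z^{j}$ with $a_1^{\,j-1}\neq1$, that is all $j$ with $k\nmid(j-1)$; in particular the multiplicity $n$ of $h$ (the smallest surviving exponent) satisfies $k\mid(n-1)$, as claimed. It remains to normalize the resonant part.

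The core of the proof is the formal classification of the tangent-to-identity germ $h$, carried out equivariantly with respect to the cyclic symmetry generated by the diagonal action $z\mapsto a_1z$. For $h(z)=z+\alpha z^{n}+\cdots$, conjugation by $z\mapsto z+tz^{m}$ alters the coefficient of $z^{\,n+m-1}$ by the factor $(m-n)\,\alpha\,t$; this vanishes precisely at $m=n$, so every monomial can be eliminated except $z^{n}$ (normalized to coefficient $1$ by a linear scaling) and the resonant term $z^{2n-1}$, whose coefficient $c$ is the unique formal modulus. Choosing all these conjugacies to commute with $z\mapsto a_1z$, by averaging them over the cyclic group if necessary, keeps $g_\gamma$ in resonant form and forces $g_\gamma=a_1z+b_1z^{n}+c_1z^{2n-1}$ with $g_\gamma^{k}=z+z^{n}+cz^{2n-1}$. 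Finally I would recover the explicit constants by composing: a direct computation of $g_\gamma^{\circ k}$ shows that its coefficient of $z^{n}$ equals $k\,a_1^{\,k-1}b_1$ and its coefficient of $z^{2n-1}$ is linear in $c_1$ with a quadratic correction in $b_1$; matching these against $z+z^{n}+cz^{2n-1}$ yields the stated formulas for $b_1$ and $c_1$. The main obstacle is precisely this last case: establishing the parabolic normal form with exactly the two surviving exponents $z^{n}$ and $z^{2n-1}$, and doing so \emph{equivariantly} so that it descends to a normal form for $g$ itself rather than merely for $g^{k}$; the bookkeeping of resonant monomials together with the final coefficient matching forms the bulk of the technical work.
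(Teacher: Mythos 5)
First, a point of comparison that matters here: the paper does not prove this proposition at all — it is quoted from Scheinberg — so there is no in-paper argument to measure you against, and your attempt has to stand on its own. On its own merits, your overall strategy (Poincar\'e--Dulac elimination of non-resonant monomials, organized by whether $a_1^{\,j-1}=1$) is the standard and correct one. Your treatment of statement (2) via the formal B\"ottcher equation, of the non-root-of-unity branch of (1) via degree-by-degree linearization (correctly noting that only non-vanishing of $a_1^{\,j-1}-1$, not a small-divisor bound, is needed), and of the finite-order branch of (1) via the averaging operator $\tfrac1k\sum_{j}a_1^{-j}g^{\circ j}$ are all sound.

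The genuine gap is in statement (3). You normalize $h=g^{k}$ to the exact polynomial $z+z^{n}+cz^{2n-1}$ by equivariant conjugacies and assert that this \emph{forces} $g_\gamma=a_1z+b_1z^{n}+c_1z^{2n-1}$. That implication is false: a resonant $k$-th root of $z+z^{n}+cz^{2n-1}$ is obtained by solving $g_\gamma^{k}=h$ coefficient by coefficient (each resonant coefficient $b_j$ of $g_\gamma$ enters linearly with nonzero factor $ka_1^{k-1}$) and generically has infinitely many nonzero terms. Worse, the two exact equalities you end with are mutually incompatible: if $g_\gamma$ is literally a polynomial of degree $2n-1$ (or $n$), then $g_\gamma^{k}$ is a polynomial of degree $(2n-1)^{k}$ (or $n^{k}$) $>2n-1$, so it cannot literally equal $z+z^{n}+cz^{2n-1}$. (The proposition's own wording shares this abuse; the intended reading is that $c$ is the formal invariant of $g^{k}$, i.e.\ $g_\gamma^{k}=z+z^{n}+cz^{2n-1}+O(z^{2n})$.) The repair is to run the elimination directly on $g$ rather than on $g^{k}$: once $g$ is reduced to resonant form $a_1z+a_nz^{n}+\cdots$ with all exponents $\equiv 1 \pmod k$, conjugation by $z+tz^{m}$ with $m\equiv 1\pmod k$ leaves the $z$- and $z^{m}$-terms unchanged (because $a_1^{m-1}=1$) and shifts the coefficient of $z^{n+m-1}$ by $(n-m)a_nt$; as $m$ runs over the resonant exponents $\geq k+1$, the targets $n+m-1$ sweep exactly the resonant exponents $>n$, with only $m=n$ obstructed. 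This kills everything except $z^{n}$ and $z^{2n-1}$ in $g$ itself, with no equivariance issue, and $b_1,c_1$ are then read off from the first two nontrivial coefficients of $g_\gamma^{k}$ exactly as in your final sentence. (A minor discrepancy worth reconciling: that matching gives $ka_1^{k-1}b_1=1$, i.e.\ $b_1=a_1/k$, which agrees with the stated $1/(ka_1)$ only when $a_1^{2}=1$.)
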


The conjugacy map $\gamma$ in the theorem above may be not  unique. 
According to the classical results of B\"ottcher, Cremer, K\"onig, Fatou,  Leau,
Schr\"{o}der, and Siegel; if $g$ has positive radius of convergence, 
the map $\gamma\in \Gamma$ can be choosen also with non-zero 
radius of convergence in all cases except when 
$a_1=\exp(2\pi i \alpha)$ where $\alpha$ belongs to the class of 
so-called  Cremer  non-linearizable examples. 

We assume that the reader is familiar with the basic notions in 
holomorphic dynamics such as the Julia, the filled Julia  and the Fatou sets 
for semigroups of holomorphic endomorphisms of either the complex plane $\C$
or the Riemann $\overline{\C}$ as well as the notion of the measure of maximal 
entropy for rational maps, see \cite{HinkMartSemi,L, Mdyn}. 

Now, we recall some facts about  relations and functional 
equations which appear in holomorphic dynamics. The following
proposition  summarizes the results in \cite{EreFunc, Ye} and \cite{YuanZhangCal}.

\begin{proposition}\label{pr.ErLevRitt} 
Let $P$ and $Q$ be non-exceptional rational maps, then the following statements
are true:

\begin{enumerate}
    \item The maps $P$, $Q$ share the measure of maximal entropy 
    if and only  if $P$ and $Q$ share the sets of preperiodic points or, equivalently,  
    there are iterations $P^n,Q^m$ satisfying  $$P^n \circ Q^m =P^{2n} $$ and $$Q^m\circ P^n=Q^{2m}.$$
 \item If $P$ and $Q$  commute then the abelian semigroup 
 $\langle P, Q \rangle$  is not free abelian, that is, $P$ and $Q$ share an iteration. 
\end{enumerate}
\end{proposition}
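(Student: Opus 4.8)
The plan is to show the three conditions in part (1)---sharing the maximal measure, sharing the preperiodic points, and the existence of the functional relations---are equivalent by supplying two elementary implications and borrowing two rigidity statements from \cite{Ye, EreFunc} and \cite{YuanZhangCal}; part (2) will then follow. Throughout I write $\mu_P$ for the measure of maximal entropy of a non-exceptional rational map $P$, and I use the standard facts that $\mu_P$ is the unique probability measure with $P^{*}\mu_P=(\deg P)\,\mu_P$, that it is non-atomic with $\operatorname{supp}\mu_P=J(P)$, and that $\mu_{P^{k}}=\mu_P$ for every $k\ge1$. Write $A=P^{n}$ and $B=Q^{m}$, and denote the functional relations by $A\circ B=A^{2}$ and $B\circ A=B^{2}$.

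First I would record the two implications that are purely formal. Comparing degrees in $A\circ B=A^{2}$ gives $\deg B=\deg A=:d$. Pulling $\mu_A$ back through this relation and using $A^{*}\mu_A=d\,\mu_A$ yields
\begin{equation*}
d\,B^{*}\mu_A=B^{*}A^{*}\mu_A=(A\circ B)^{*}\mu_A=(A^{2})^{*}\mu_A=d^{2}\,\mu_A,
\end{equation*}
so $B^{*}\mu_A=(\deg B)\,\mu_A$; since $\mu_A=\mu_P$ is non-atomic it places no mass on the finite exceptional set of $B$, and uniqueness of the eigenmeasure forces $\mu_A=\mu_B$, that is $\mu_P=\mu_Q$. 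For the preperiodic points, an immediate induction from $A\circ B=A^{2}$ gives $A\circ B^{k}=A^{k+1}$ for all $k\ge0$. Hence if $z\in\mathrm{Prep}(P)=\mathrm{Prep}(A)$ the set $\{A(B^{k}(z)):k\ge0\}\subset\{A^{j}(z):j\ge1\}$ is finite, and because $A$ has finite fibres the whole $B$-orbit $\{B^{k}(z)\}$ lies in a finite set; thus $z\in\mathrm{Prep}(Q)$. The symmetric relation $B\circ A=B^{2}$ gives the reverse inclusion, so the relations imply both $\mu_P=\mu_Q$ and $\mathrm{Prep}(P)=\mathrm{Prep}(Q)$.

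To close the equivalences it remains to run the two nontrivial implications in the opposite direction, and these are the main obstacles. The implication $\mathrm{Prep}(P)=\mathrm{Prep}(Q)\Rightarrow\mu_P=\mu_Q$ I would take from the equidistribution and height machinery of \cite{YuanZhangCal}: in the non-exceptional case a common set of preperiodic points pins down the canonical measure. The implication $\mu_P=\mu_Q\Rightarrow\bigl(A\circ B=A^{2}\text{ and }B\circ A=B^{2}\bigr)$ is the rigidity theorem of \cite{Ye} (see also \cite{EreFunc}): two non-exceptional maps with a common maximal measure generate a semigroup obeying a Ritt--Levin type relation, from which the explicit iterates and the stated relations are extracted. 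Combining these with the two formal implications of the previous paragraph closes the cycle and yields the equivalence of all three conditions in (1); the genuine difficulty is concentrated in the extraction of the precise relations in \cite{Ye}, the remainder being bookkeeping.

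For part (2), suppose $P\circ Q=Q\circ P$. Since the pull-back operators of commuting maps commute, $Q^{*}(P^{*}\mu_Q)=P^{*}(Q^{*}\mu_Q)=(\deg Q)\,P^{*}\mu_Q$, so the probability measure $(\deg P)^{-1}P^{*}\mu_Q$ is an eigenmeasure of $Q$ with eigenvalue $\deg Q$; being non-atomic, uniqueness gives $P^{*}\mu_Q=(\deg P)\,\mu_Q$, whence $\mu_P=\mu_Q$. I would then invoke Eremenko's theorem on commuting rational maps \cite{EreFunc}, which upgrades a shared maximal measure for commuting non-exceptional maps to a common iterate $P^{a}=Q^{b}$. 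Such an identity is a nontrivial relation in the abelian semigroup $\langle P,Q\rangle$, so $\langle P,Q\rangle$ is not free abelian, proving (2). As in part (1), the only nonformal step is the passage from the shared measure to the algebraic relation, which is exactly the content of the cited work.
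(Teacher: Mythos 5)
Your proposal is correct and matches the paper's treatment: the paper states this proposition without proof, explicitly as a summary of results from \cite{EreFunc}, \cite{Ye} and \cite{YuanZhangCal}, which is exactly where you place the two nontrivial implications (the preperiodic-point rigidity and the extraction of the Levin relations from a shared maximal measure) as well as Eremenko's commuting-maps theorem for part (2). Your elementary verifications --- the degree comparison, the pull-back computation showing the relations force $\mu_P=\mu_Q$, and the finite-fibre argument for preperiodic points --- are sound and consistent with the paper's own Lemma~\ref{lm.preperiodic}.
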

A semigroup satisfying the relations in part 1 of last proposition is known as 
left weakly archimedian. However, in dynamics of rational maps these relations appeared 
in the work of Levin \cite{Levinrelations}. So, we sometimes will refer to them as \textit{Levin relations}.

\section{Proofs of theorems}

In this section, we give the proof of the theorems which were 
stated in the introduction. 

\subsection{Semigroups of entire maps and formal series without 
non-cyclic free subsemigroups}

To prove Theorem \ref{th.polysemi}, we need some basic notions of Ore's theory on 
reversible cancellative semigroups, see \cite{PatersonBook}. A semigroup $S$ is called 
\emph{left (right) reversible} or that satisfies \emph{left (right) 
Ore's conditions} if for every pair $$a,b\in S$$ there exists a 
pair $x,y\in S$ such that $xa=yb$ ($ax=by$). In other words, every 
pair of left (right) principal ideals have non-empty intersection. 
It is known, that a cancellative 
left (right) reversible semigroup embeds into a group 
$\mathcal{F}(S)=S^{-1}S$ ($\mathcal{F}(S)=SS^{-1}$) which is 
called the group of left (right) fractions of $S$, see \cite{Donnelly, PatersonBook}.

The group  $\mathcal{F}(S)$ is unique in the following sense: 
If $S\subset G$ is a left (right) reversible subsemigroup of the group $G$, generating 
$G$ as a group, then $G$ is isomorphic 
to $\mathcal{F}(S)$.

Even more, by construction (for details, see \cite{GranirerExtremely}), 
the group of right fractions of a cancellative semigroup $S$ is amenable if 
and only if $S$ is left amenable. By using the opposite semigroup argument
we can interchange left by right properties. 

Now, we are ready to prove Theorem \ref{th.polysemi}. 

\begin{proof}[Proof of Theorem \ref{th.polysemi}]
Let $S\subset G$ be a the collection of all rational elements
in $G$. Then, $S$ is a semigroup which generates $G$ as 
a group. Assume, $S$ contains a non-exceptional rational map, 
then by hypothesis and  Theorem \ref{th.AmenabilityI}, $S$ is a right-amenable 
semigroup of rational maps. Thus $S$ is right reversible. Since 
$S\subset G$ is cancellative, then bysee
Ore theory on reversible cancellative semigroups, $S$ embeds into 
the amenable group $\mathcal{F}(S)$ of its right fractions, 
which is isomorphic to $G$ by the unicity of $\mathcal{F}(S).$ 
This finishes the non-exceptional case. 

Now, assume that $S$ consists only of exceptional rational maps.  
By assumptions and by Proposition \ref{pr.Bottcher}, $S$ consists 
either of M\"obius maps or of rational maps which are 
conjugated to either a power polynomial, a Tchebichev  polynomial 
or a Latt\`es map. 

If all elements in $S$ have degree 
$1$, then $S$ is subsemigroup of the group 
$\mathrm{Mob}(\C)$, of M\"obius transformations in $\C$ fixing 
the origin. Hence, by conjugacy, the group $G$ is isomorphic 
to a subgroup of the amenable group $\mathrm{Aff}(\C)$. Therefore, $G$ is amenable. 
 
Now, let $p\in S$  be a non-injective rational map with  minimal degree 
$d\geq 2$ and let $\gamma \in \mathrm{Mob}(\C)$  be the 
conjugating map  of $p$ to either a monomial, a Tchebichev
or a Latt\`es map. 

We work out the monomial case, while the other cases follow 
by analogous arguments. We can assume that $\gamma \circ p \circ \gamma^{-1}(z)=z^d$ 
and $\gamma(0)=1$. Since every element of $S$ is exceptional, then by assumptions and
by Corollary 3.3 in \cite{Tucker}, the semigroup 
$\tilde{S}=\gamma \circ S \circ \gamma^{-1}$ consists only of monomials. 

First, we claim that every non-injective element $q$ in 
$\tilde{S}$ is a power map. 
Indeed, by assumptions and by Proposition \ref{pr.Bottcher}, 
every element $q\in \tilde{S}$ 
fixes $1$ and has the form $\omega z^k$, hence $\omega=1$ as claimed. 

The only element of degree $1$ in $\tilde{S}$ is the identity. 
Otherwise, if $\tau\neq Id\in \tilde{S}$ is an element of degree $1$, then 
$\tau\circ z^d$ is a non-injective element which is not a power 
map contradicting the claim.   Then, $S$ is an abelian semigroup 
generating an abelian group $G$. Thus $G$ is amenable. \end{proof}

Now, we prove Corollary \ref{cor.titsalternative}.
\begin{proof}[Proof of Corollary \ref{cor.titsalternative}]
Let $S$ be the semigroup of all rational elements of $G$, 
then as follows from the proof of Theorem \ref{th.polysemi},
$S$ is either abelian or an amenable cancellative semigroup of 
non-exceptional rational maps. By Theorem 1 in \cite{CMAmenability}, 
the group of right fractions of $S$, which is isomorphic to $G$, 
is virtually cyclic. 
 \end{proof}

To prove Theorem \ref{th.superatracting}, we need the following lemmas.

\begin{lemma}\label{lem.forth6}
Let $S\subset \mathfrak{S}_2$ be a semigroup, then either
\begin{itemize}
\item $S$ contains a non-cyclic free subsemigroup,
\item $S$ contains a non-cyclic free abelian semigroup, or 
\item for every pair of elements $f,g\in S$ there exist numbers $k,l$ such that
$$f^k\circ g^l=f^{2k}$$ and  $$g^l\circ f^k =g^{2l}.$$
\end{itemize}
\end{lemma}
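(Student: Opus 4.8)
The plan is to fix a pair $f,g\in S$ and use the B\"ottcher normalization of Proposition \ref{pr.Bottcher} to reduce the trichotomy to an explicit pair of monomials, after which each alternative becomes a question in elementary number theory controlled by the finiteness of the group of roots of unity generated by the coefficients. Write $f(z)=a_mz^m+\dots$ and $g(z)=b_nz^n+\dots$ with $m,n\geq 2$, and let $\gamma=\gamma_f\in\Gamma_1$ be the B\"ottcher coordinate of $f$. Conjugation by $\gamma$ is an automorphism of $\mathfrak{S}$, so $\langle f,g\rangle\cong\langle z^m,h\rangle$ with $h=\gamma\circ g\circ\gamma^{-1}$, and any relation proved for the conjugated pair transfers verbatim to $f,g$ (conjugation commutes with taking iterates and products). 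If $\langle z^m,h\rangle$ is free, then $\langle f,g\rangle$ is a non-cyclic free subsemigroup of $S$ and the first alternative holds; otherwise Proposition \ref{pr.Bottcher} forces $h=\omega z^n$ with $\omega$ a root of unity. It remains to analyze the monomial semigroup $\langle z^m,\omega z^n\rangle$.

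The structural observation I would use is that monomials form the semidirect product $\mathbb{C}^\ast\rtimes(\mathbb{N}_{\geq1},\times)$ under $(c_1,d_1)(c_2,d_2)=(c_1c_2^{d_1},d_1d_2)$, in which $f=(1,m)$, $g=(\omega,n)$, every coefficient of $\langle f,g\rangle$ lies in the finite cyclic group $\mu_r=\langle\omega\rangle$ (where $r$ is the order of $\omega$), and the degree map $(c,d)\mapsto d$ is a homomorphism onto $\langle m,n\rangle\subset(\mathbb{N}_{\geq1},\times)$. I would then split on the multiplicative dependence of $m$ and $n$. If $m$ and $n$ are multiplicatively independent, I claim $\langle f,g\rangle$ always contains a non-cyclic free abelian subsemigroup: choosing $s$ large enough that $\mathrm{ord}(\omega^{m^s})$ equals $r_1$, the largest divisor of $r$ coprime to $m$, the element $v=f^s\circ g=(\omega^{m^s},nm^s)$ has degree $nm^s$ multiplicatively independent from $m$; since $\gcd(m,r_1)=1$ there is $k$ with $m^k\equiv1\pmod{r_1}$, whence $f^k=(1,m^k)$ and $v$ commute (as $(\omega^{m^s})^{m^k-1}=1$) and have multiplicatively independent degrees, so $\langle f^k,v\rangle\cong\mathbb{N}^2$. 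This realizes the second alternative.

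If instead $m$ and $n$ are multiplicatively dependent, write $m=c^a$, $n=c^b$ with $c\geq2$ and $a,b\geq1$, so that $m^k=n^l$ exactly when $ak=bl$; taking $k=bj$, $l=aj$ forces the degrees on both sides of the Levin relations to agree. A direct computation in the semidirect product shows that, once $m^k=n^l$, both relations $f^k\circ g^l=f^{2k}$ and $g^l\circ f^k=g^{2l}$ collapse to the single divisibility condition $r\mid n^l\alpha_l$, where $\alpha_l=1+n+\dots+n^{l-1}=\tfrac{n^l-1}{n-1}$. I would then produce $j$ with $r\mid n^{aj}\alpha_{aj}$ by treating each prime power $p^e\,\|\,r$ separately: the factor $n^{aj}$ absorbs every prime dividing $n$ once $j$ is large, while for $p\nmid n$ a multiplicative-order (lifting-the-exponent) argument gives $p^e\mid\alpha_{aj}$ for all suitably divisible $j$. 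Combining these finitely many congruence conditions on $j$ produces a single admissible exponent, and the Levin relations hold, realizing the third alternative.

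The main obstacle is precisely this last step: multiplicative dependence makes the degrees match for free, but forcing the root of unity $\omega^{n^l\alpha_l}$ to equal $1$ simultaneously at every prime dividing $r$ is where the genuine work lies. The saving feature throughout is that $\mu_r$ is finite, which both drives the independent case into a free abelian subsemigroup and reduces the dependent case to finitely many lifting-the-exponent conditions; I would take particular care with the prime $p=2$ in that estimate, and separately dispose of the degenerate cases $f=g$ and $\langle f,g\rangle$ cyclic, for which the relations are immediate.
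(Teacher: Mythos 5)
Your proposal is correct and follows essentially the same route as the paper: reduce via the B\"ottcher coordinate to $\langle z^m,\omega z^n\rangle$, exploit the finiteness of $\langle\omega\rangle$ by splitting it into its part coprime to the degree and its complementary part to manufacture a commuting pair, and then conclude either a rank-two free abelian subsemigroup (degrees multiplicatively independent) or the Levin relations (degrees multiplicatively dependent). Your version merely makes explicit, via the semidirect-product computation and the divisibility condition $r\mid n^l\alpha_l$ handled prime by prime, what the paper compresses into the decomposition $W=Deck(f^r)\times Aut(f^s)$ and the commuting-maps dichotomy.
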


\begin{proof}
Let $f(z)=z^m
a_{m+1}z^{m+1}+...$ and $g(z)=z^k
b_{k+1}z^{k+1}+...$, $k,m\geq 2$, be arbitrary elements of $S$. If 
$\gamma_f \in \Gamma_1$ is the ``B\"ottcher coordinate" of $f$,
then by Proposition \ref{pr.Bottcher} either the semigroup 
$\langle f, g \rangle$ is free or 
$$\gamma_f\circ \langle f, g\rangle \circ \gamma_f^{-1}=\langle z^m,\omega z^k\rangle,$$
where $\omega$ is a root of unity. 

From here, we can assume without loss of generality, that 
$f=z^m$ and $g=\omega z^k.$ We claim that there exist natural 
numbers $\alpha, \beta\in \mathcal{N}$ such that $f^\alpha\circ g$ commutes
with $f^{\beta}.$ Indeed, let $W$ be the multiplicative group generated by $\omega$. 
Then, there are  numbers $r,s$ such that $W=Deck(f^r)\times Aut(f^s)$, where
$Deck(f^r)=\{w\in W: f^r(wz)=f^r(z)\}$ and $Aut(f^s)=\{ w\in W: f^s(wz)=wf^s(z)\}$. Now, 
setting $\alpha=rs$ and $\beta=s$ we get the numbers as claimed.

So, $\langle f^\alpha \circ g,f^\beta\rangle$ is either a free abelian semigroup 
or $f^\alpha \circ g$  shares an iteration with $f$. Hence, there are numbers
$d, \tilde{m},\tilde{k}$ so that $m=d^{\tilde{m}}$ and $k=d^{\tilde{k}}$. As above, 
we have $\omega=\lambda \tau$ with $\lambda\in Aut(g^{\tilde{s})}$ and $\tau\in Deck(g^{\tilde{r}})$
for suitable $\tilde{r}$ and $\tilde{s}$. Thus, there exists an iteration $n$ so that  $g^n=\tilde{\tau}z^{k^n}$
with $\tilde{\tau}\in Deck(g^n)$.
If $\gamma$ and $\delta$ are natural numbers such that $m^\gamma=k^\delta$ then 
$g^{n\gamma}\circ f^{n\delta}=g^{2n\gamma}$. By construction, 
$Deck(g^{n\gamma})\subset Deck(f^{n\delta})$ so we also have 
$f^{n\delta}\circ g^{n\gamma}=f^{2n\delta}.$\end{proof}

The following lemma serves as an analogue of the implication from part 2 to part 3 of Theorem \ref{th.Several}.

\begin{lemma}\label{lm.preperiodic} If $f,g\in Ent(\C)$, $l$ and $k$ are integers such that 
$$f^n\circ g^k=f^{2n}, $$ and $$ g^k\circ f^n=g^{2k},$$
then $f$ and $g$ share the set of preperiodic points. 
\end{lemma}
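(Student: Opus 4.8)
The plan is to pass to the maps $F=f^n$ and $G=g^k$, for which the two hypotheses become the clean relations $F\circ G=F^2$ and $G\circ F=G^2$. First I would observe that a map and any of its iterates have the same preperiodic set: preperiodicity of $z$ means that the forward orbit $\mathcal{O}_f(z)=\bigcup_{j\ge 0}\{f^j(z)\}$ is finite, and from $\mathcal{O}_{f^n}(z)\subset\mathcal{O}_f(z)=\bigcup_{i=0}^{n-1}f^i\big(\mathcal{O}_{f^n}(z)\big)$ one sees that $\mathcal{O}_f(z)$ is finite if and only if $\mathcal{O}_{f^n}(z)$ is finite. Hence $Prep(f)=Prep(F)$ and $Prep(g)=Prep(G)$, so it suffices to prove $Prep(F)=Prep(G)$.

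The next step is purely algebraic. Iterating the two relations I would establish
\[
F^m\circ G^j=F^{m+j}\qquad\text{and}\qquad G^m\circ F^j=G^{m+j}\quad(m\ge 1,\ j\ge 0),
\]
by a short induction: for the first, $F^m\circ G=F^{m-1}\circ(F\circ G)=F^{m-1}\circ F^2=F^{m+1}$ settles $j=1$, and composing on the right with $G^{j-1}$ propagates it to every $j$; the second identity is symmetric.

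The substantive step is then to transport a coincidence in one orbit into the other. If $z\in Prep(F)$, finiteness of $\mathcal{O}_F(z)$ gives integers $a>b\ge 0$ with $F^a(z)=F^b(z)$; evaluating the identity $G\circ F^a=G^{1+a}$ (and its analogue with $b$) at $z$ yields $G^{1+a}(z)=G(F^a(z))=G(F^b(z))=G^{1+b}(z)$ with $1+a>1+b\ge 1$, so $\mathcal{O}_G(z)$ is eventually periodic and $z\in Prep(G)$. Interchanging the roles of $F$ and $G$ and using $F\circ G^a=F^{1+a}$ gives the reverse inclusion, proving $Prep(F)=Prep(G)$. I expect no serious obstacle here: the argument is elementary, and the only point requiring genuine care is that, since $f$ and $g$ are entire and hence non-invertible, ``preperiodic'' must be read throughout as ``finite forward orbit'', which is exactly what legitimizes both the reduction $Prep(f)=Prep(f^n)$ and the final eventual-periodicity conclusion.
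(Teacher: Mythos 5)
Your proof is correct and takes essentially the same route as the paper's: both directions amount to reading $F\circ G=F^{2}$ and $G\circ F=G^{2}$ as semiconjugacies (by $F$ and $G$ respectively) that transport finite forward orbits from one map to the other. You merely spell out the iterated identities and the reduction $Prep(f)=Prep(f^{n})$ that the paper leaves implicit.
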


\begin{proof}
By symmetry on $f$ and $g$, it is enough to show that the set of preperiodic points of $g$ belongs 
to the set of preperiodic points of $f$. From the first equality $g^k$ is 
semiconjugated to $f^n$ by $f^n$. Therefore, if $x$ is preperiodic for $g^k$ then $f^n(x)$ is 
preperiodic for $f^n$, thus $x$ is preperiodic for $f$. 
\end{proof}

We proceed to the proof of Theorem \ref{th.superatracting}.

\begin{proof}[Proof of Theorem \ref{th.superatracting}] 
Let $f$ and $g$ be entire maps and $z_0$ be a superattracting 
periodic point for $g$  as in the conditions of the theorem. By an appropriate conjugacy,
we can take $z_0=0$. Let $k$ and $l$ be the periods of $0$ for $f$ and $g$, 
respectively, then  $\langle f^k,g^l\rangle \subset \mathfrak{S}$. If $0$ is 
superattracting for $f$, then Lemmas \ref{lem.forth6} and \ref{lm.preperiodic} finish
the proof. Otherwise, $f^k(z)=\lambda z+...$ with $\lambda \neq 0$ 
and $f^k\circ g^l=\lambda z^n+...$.
If $\gamma_g$ is the B\"ottcher coordinate for $g^l$, then by Proposition \ref{pr.Bottcher} either 
$\gamma_g\circ \langle f^k\circ g^l,g^l\rangle\circ \gamma_g^{-1}$ is free or 
$\gamma_g\circ \langle f^k\circ g^l,g^l\rangle\circ \gamma_g^{-1}=\langle 
wz^n,z^n\rangle$ where $w$ is a root of unity. So, 
$$\gamma_g\circ [f^k\circ g^l]\circ \gamma_g^{-1}=\gamma_g\circ f^k\circ 
\gamma_g^{-1}\circ z^n =wz^n.$$ Hence, by right cancellativity $\gamma_g\circ f^k\circ \gamma_g^{-1}(z)=wz$. 
As $f$ is a non-injective entire map and $w$ is a root of unity, the last equation contradicts the
assumption $\lambda\neq 0.$
\end{proof}

Now, we show Theorem \ref{th.sharingiteration}.

\begin{proof}[Proof of Theorem \ref{th.sharingiteration}]

The following implications are straightforward (3) implies (2) implies (1). Hence
it is enough to show that (1) implies (3). We have to show that $f$ and $g$ commutes.
Since $f$ is of infinite order, by Proposition \ref{pr.SeriesLineariz}, 
there exists $\gamma_f\in \Gamma$ such  that for $f=\lambda z + ...$ we have 
$\gamma_f\circ f \circ \gamma^{-1}_f(z)=\lambda z$. Since $g$ shares an 
iteration with $f$, then an iterate of $g$ is also conjugated to a monomial and,  hence
$\gamma_f \circ g \circ \gamma_f^{-1}=tz$ for some $t\in \C$, which implies that $f$ and $g$ commutes as we wanted to show.

If $\lambda$ is a root of unit and $f(z)=\lambda z+a_nz^n+...$ where 
$a_n$ is the first non-zero coefficient after $\lambda z$. By item 3 of
Proposition \ref{pr.SeriesLineariz}, we have 
$\gamma_f\circ f\circ\gamma_f^{-1}(z)=\lambda z+bz^n+cz^{2n-1}$ for suitable constants $b$ and $c$. Thus, we have  $$\gamma_f\circ \langle f, g\rangle\circ \gamma_f^{-1}
=\langle \lambda z +b z^n+ c z^{2n-1}, \gamma_f\circ g \circ\gamma_f^{-1}\rangle,$$ 
where $$q(z):=\gamma_f\circ g \circ \gamma_f^{-1}(z)=wz+ b_m z^m+...$$ 
is a polynomial with $w$ a root of unity.

If  $p(z):=\gamma_f \circ f \circ \gamma_f^{-1}(z)$, then by 
Theorem \ref{th.Several}, Corollary 24 in \cite{CMAmenability} and 
Theorem 1 in \cite{AtelaHu}, there exist natural numbers $l$ 
and a root of unity $\tau$ such that $q=\tau p^l$, and the 
map $z\mapsto \tau z$ commutes with $p^l$ and thus with $p$. 
Thence, $q$ commutes with $p$ and we are done. 
\end{proof}

 Now we continue to prove Theorem \ref{th.preperiodic}.

\begin{proof}[Proof of Theorem \ref{th.preperiodic}]
Let $f, g$ and $D$ be as in the hypothesis of the theorem. Let 
$p$ and $q$ be the periods satisfying $f^p(D)=D=g^q(D).$ 
Let $h:\mathbb{D}\rightarrow D$ be a Riemann map, then the semigroup 
$\hat{S}=h^{-1}\circ \langle f^p, g^q \rangle \circ h\subset Rat(\C)$ and 
is a semigroup generated by two finite degree Blaschke maps 
$B_f=h^{-1}\circ f^p\circ h^{-1}$  and $B_g=h^{-1}\circ g^q\circ h^{-1}$.   

In order to avoid intersection with  Theorem \ref{th.superatracting}, 
in the proof of Theorem  \ref{th.preperiodic} we can assume that 
either $B_f$ and $B_g$ do not have a common fixed point in 
$\mathbb{D}$ or the common fixed point is not superattracting  for none of $B_f$ and $B_g.$ 

First, assume that $\hat{S}$ contains a non-exceptional element, say $B$, then by 
Theorem \ref{th.AmenabilityI}, every element $s$ of $\hat{S}$ shares the measure of maximal 
entropy with $B$. By Proposition \ref{pr.ErLevRitt}, then $B$ and $s$ satisfy the Levin relations, 
in particular, by Lemma \ref{lm.preperiodic} $f$ and $g$ share the set of preperiodic points.
Hence, we can assume that $\tilde{S}$ consists only of exceptional non-constant rational maps. 

Let $B_f=\lambda z$ with $|\lambda|=1$ be an irrational rotation (thus $D$
is a Siegel disk for $f$) and $B_g$ is conjugated to either $z^{deg(B_g)}$ if
$deg(B_g)>1$ or to an irrational rotation when $deg(B_g)=1.$ We claim that 
$deg(B_g)=1$ and $B_g$ is a rotation around $0.$ Indeed, if $deg(B_g)>1$, 
then the maps $B_g$ and $B_f\circ B_g$ define a right amenable semigroup 
of rational maps and hence these maps share the measure of maximal entropy. Thus, 
the measure of maximal entropy $\mu_g$ of $B_g$ is invariant for $B_f$. 
Then $\mu_g$ is the one-dimensional Lebesgue measure of the unit circle, since
$B_f$ is an irrational rotation. Therefore, $B_g=w z^{deg(B_g)}$ with $|w|=1$.
It follows that the semigroup $\langle \lambda z, wz^{deg(B_g)}\rangle$ is free by 
Proposition \ref{pr.Bottcher}, this is a contradiction.

Hence $deg(B_g)=1$ and by the classification of periodic bounded Fatou 
components for entire maps, $B_g$ is an irrational rotation around 
fixed point in $\mathbb{D}$. 
With the same token we get that $D$ is a Siegel disk for $g$.
Hence $\hat{S}\subset Aut(\mathbb{D})\cong PSL(2,\mathbb{R})$.

To complete the proof of the theorem, we need the following Lemma, 
which can be regarded as a version of Tits alternative for  finitely 
generated non-discrete subsemigroups of $Aut(\mathbb{D})$.

\begin{lemma}\label{lm.TitsD}
Let $S=\langle \gamma, \tau \rangle\in Aut(\mathbb{D})$ be a semigroup generated by 
the irrational rotations $\gamma$ and $\tau$ with centers $c_\gamma$ and $c_\tau$, respectively. Then either 
$c_\gamma=c_\tau$  so $S$ is abelian or $c_\gamma\neq c_\tau$ and
$S$ contains a non-cyclic  free subsemigroup. 
\end{lemma}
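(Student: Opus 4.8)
The plan is to split the argument according to whether the two centers coincide. If $c_\gamma=c_\tau$, I would first conjugate by the automorphism of $\mathbb{D}$ carrying the common center to $0$; both generators then become genuine Euclidean rotations $z\mapsto e^{i\alpha}z$ and $z\mapsto e^{i\beta}z$ about the origin, which manifestly commute, so $S$ is abelian. All the substance is therefore in the case $c_\gamma\neq c_\tau$, where the aim is to exhibit a free subsemigroup on two generators inside $S$, i.e. inside the set of positive words in $\gamma$ and $\tau$.

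My strategy there is a ping-pong argument on the boundary circle $\partial\mathbb{D}$, for which I first need hyperbolic elements realized as positive words. Since $\gamma$ is an irrational rotation, the positive powers $\{\gamma^n:n\ge 1\}$ are dense in the compact one-parameter group of rotations about $c_\gamma$, and likewise $\{\tau^m:m\ge 1\}$ about $c_\tau$; hence for any target angles I can approximate the rotation $R_{c_\gamma,\theta}$ by some $\gamma^n$ and $R_{c_\tau,\phi}$ by some $\tau^m$. The key geometric input is that the composition of two rotations of the hyperbolic plane about distinct centers is hyperbolic for suitable angles: writing each rotation as a product of two reflections and cancelling the common reflection in the geodesic $\ell$ through $c_\gamma$ and $c_\tau$, the product $R_{c_\gamma,\theta}R_{c_\tau,\phi}$ becomes a product of reflections in geodesics $m\ni c_\gamma$ and $n\ni c_\tau$, which is hyperbolic exactly when $m$ and $n$ are ultraparallel. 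Taking $\theta,\phi$ near $\pi$ forces $m,n$ to be nearly perpendicular to $\ell$, hence ultraparallel; since being hyperbolic and having prescribed boundary fixed points are open conditions, the density above yields positive words $\gamma^n\tau^m\in S$ that are hyperbolic with well-defined attracting and repelling fixed points on $\partial\mathbb{D}$.

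Next I would produce a second hyperbolic positive word whose axis differs from the first one. Using the continuity and non-constancy of the assignment $(\theta,\phi)\mapsto\{\text{boundary fixed points of }R_{c_\gamma,\theta}R_{c_\tau,\phi}\}$ over the open hyperbolic region — non-constant because no single geodesic can be invariant under $R_{c_\gamma,\theta}$ for a range of $\theta$ — I would select two parameter pairs whose four boundary fixed points are all distinct, and then realize both, again by density and openness, as hyperbolic positive words $a_0=\gamma^{n_1}\tau^{m_1}$ and $b_0=\gamma^{n_2}\tau^{m_2}$ in $S$ with four distinct fixed points $a_0^{\pm},b_0^{\pm}$. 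I expect this step, namely controlling the axes of genuine positive words rather than of the idealized rotation products, to be the main obstacle; it is handled by the openness of the relevant configuration together with the density of $\{\gamma^n\}$ and $\{\tau^m\}$ (a single perturbed conjugation is not enough, since in the near half-turn regime it moves the axis only negligibly).

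Finally I would run the ping-pong. I choose disjoint closed arcs $U\ni a_0^{+}$ and $V\ni b_0^{+}$ small enough that $U\cup V$ misses the repelling points $a_0^{-}$ and $b_0^{-}$; by north--south dynamics there is an $N$ with $a:=a_0^{N}$ and $b:=b_0^{N}$ satisfying $a(U\cup V)\subseteq U$ and $b(U\cup V)\subseteq V$, while $U\cap V=\emptyset$. A standard positive ping-pong argument then shows that $\langle a,b\rangle$ is free: for positive words $w,w'$ agreeing as maps, comparing $w(x),w'(x)\in U\cup V$ for $x\in U\cup V$ determines the first letters, injectivity of the Möbius generators lets me cancel the common first letter and induct, and the fact that any nonempty word sends $U\cup V$ into just one of $U,V$ rules out one word being a proper prefix of the other. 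Since $a,b\in S$, this free subsemigroup lies in $S$, which settles the case $c_\gamma\neq c_\tau$ and completes the proof.
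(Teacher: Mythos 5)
Your argument is correct and follows essentially the same route as the paper: both produce a hyperbolic element of $S$ as a positive word $\gamma^k\circ\tau^l$ by factoring the irrational rotations into reflections in geodesics through the two centers (using density of the angle sets $\{n\alpha \bmod 2\pi\}$), then obtain a second hyperbolic positive word with disjoint fixed-point data and conclude freeness by a Schottky/ping-pong argument. The only cosmetic differences are that the paper builds the second hyperbolic element from the dense family of rotations of $S$ about other centers and invokes the Klein combination theorem, whereas you vary the rotation angles and run the ping-pong directly on boundary arcs of $\partial\mathbb{D}$.
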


\begin{proof}
If $c_\gamma=c_\tau$, then $S$ is abelian. We can assume that $c_\gamma=0$ 
and $\gamma(z)=\lambda z$ and $c:=c_\tau\neq 0$. Let $\sigma$ be a 
hyperbolic geodesic in $\mathbb{D}$ passing by $0$ and $c$.  Let $\triangle_0$ 
and $\triangle_c$ be the sets of angles of all the rotations in the semigroups 
$\langle \gamma \rangle$ and $\langle \tau \rangle$, respectively. Let $\alpha$ and  $\beta$ 
be disjoint geodesics passing through $0$ and $c$, respectively, so that the angle  
between $\sigma $ and $\alpha$ belongs to $\triangle_0$ and the angle between 
$\sigma$ and $\beta$ belongs to $\triangle_c$.  We can construct such configuration 
of geodesics since $\gamma$ and $\tau$ are irrational rotations. 

Let $r_\sigma,r_\alpha,r_\beta$ be the reflections with respect to
$\sigma,\alpha$ and $\beta$ respectively, then the element 
$h=r_\beta\circ r_\alpha$ is  hyperbolic and 
$$h=r_\beta \circ r_\sigma \circ r_\sigma \circ r_\alpha=[r_\beta\circ r_\sigma]\circ [r_\sigma \circ r_\alpha].$$
By construction, $$r_\beta \circ r_\sigma \in \langle \gamma \rangle$$
and $$r_\sigma\circ r_\alpha\in \langle \tau \rangle.$$
Hence, there are numbers $k$ and $l$ such that $$h=\gamma^k \circ \tau^l.$$

Now, let $U\subset \mathbb{D}$ be a fundamental domain for the action of $h\in S$ 
on $\mathbb{D}.$ If $S$ contains another hyperbolic element $g$ with fundamental
domain $V$ such that  $\overline{\C}\setminus \mathring{U}\subset V$ and
$\overline{\C}\setminus \mathring{V}\subset U$, where $\mathring{V}$ denotes 
the interior of $V$. Then by the Klein combination theorem (see for example \cite{KapovichHyp}), 
the semigroup  $\langle h, g \rangle $ is free as it is a subsemigroup of a Schottky non-cyclic 
free group generated by $h$ and $g$.  The 
existence of $g$ above follows from the elementary fact in hyperbolic geometry: 
\textit{The semigroup $S$ contains a dense subset 
of rotations around almost every point in the unit disk $\mathbb{D}.$} 

We finish the lemma with the sketch of the proof of this fact. Let $z$ be a point in 
$\mathbb{D}$ so that $(z,0,c)$ forms a  non-degenerated hyperbolic 
triangle in $\mathbb{D}$. Then a suitable  rotation around $z$ 
can be expressed as a composition of rotations  around $0$ and $x$ (using suitable 
reflections inside the triangle, as above).
Since $\gamma$ and $\tau$ are irrational rotations in $S$ around $0$ and $c$, respectively, 
and all angles in the triangle $(z,0,c)$ depends continuously,  every 
neighborhood of $z$ contains a point $z_0$ with a dense set of rotations 
around $z_0$ in $S$. \end{proof}
We continue the proof of Theorem \ref{th.preperiodic}.

By Lemma \ref{lm.TitsD} and by assumptions, the semigroup $\hat{S}$ (and hence $S$) is abelian 
and there are integer $m,n\geq 1$ such that $B_f^m=B_g^n$ (and hence $f^{pm}=g^{qn}$). 

Finally, by assumptions, Theorem \ref{th.AmenabilityI}, Corollary 3.3 in \cite{Tucker} (see also 
\cite{CMAmenabilityII}) and by Theorem 4 in \cite{CMAmenabilityII}, we may assume that  $B_f(z)=z^k$ and 
$B_g(z)=\omega z^m$ for some natural numbers $k$ and $m$ and some root of unity $\omega.$
 Now, we can apply Lemmas \ref{lem.forth6} and \ref{lm.preperiodic} to finish the proof.
 
 \end{proof}

\begin{proof}[Proof of Corollary \ref{cor.boundeddomain}] 

Let $h:D\rightarrow \mathbb{D}$ be a Riemann map, then the semigroup
$\hat{S}=h\circ S \circ h^{-1}$ is non-exceptional. Thus by Theorem \ref{th.AmenabilityI}, 
every non-injective element of $\tilde{S}$ shares the  measure of 
maximal entropy with  $B_f=h\circ f\circ h^{-1}.$ Since $\tilde{S}$ does 
not contain injective elements, by Proposition \ref{pr.ErLevRitt} every pair 
of elements satisfy Levin relations. Thus by Lemma \ref{lm.preperiodic}, 
$f$ and $g$ share the set of preperiodic points. 
\end{proof}

\begin{proof}[Proof of Corollary \ref{cor.SiegelAbel}]
As in Lemma \ref{lm.TitsD} let $h:D\rightarrow \mathbb{D}$ be a Riemann map. 
Hence $\hat{S}=h\circ \langle f,g\rangle \circ h^{-1}$ is a semigroup of irrational rotations. 
Let $B_f=h\circ f \circ h^{-1}$ and $B_g=h\circ g \circ h^{-1}$. 
Then either the centers coincide and $S$ is abelian or $\tilde{S}$ contains a hyperbolic element $\gamma=B_f^s\circ B_g^r$ for 
suitable numbers $s,r$. Then $D$ is an invariant Fatou component for $f^s\circ g^r$ without fixed points and injective on $D$
which contradicts the classification of bounded invariant Fatou connected components of entire maps. 
\end{proof}

\begin{proof}[Proof of Theorem \ref{th.boundedFatou}] Let $h:D\rightarrow \mathbb{D}$ be 
a Riemann map.  Then 
$B_f=h\circ f \circ h^{-1}$ is a  Blashke endomorphism of 
the unit disk. Also  $B_f=h\circ f \circ h^{-1}$ is a 
non-injective Blashke endomorphisms since $B_f$ shares the set of 
preperiodic points with $B_g$.
By Theorem 1.1.5 in \cite{Ye}, $B_f$ and $B_g$ shares the measure 
of maximal entropy. By Theorem \ref{th.AmenabilityI}, we get that $h\circ \langle f, g \rangle
\circ h^{-1}$ is a non-exceptional right amenable semigroup. We apply
Theorem \ref{th.AmenabilityI} and Proposition \ref{pr.ErLevRitt} to
get the conclusion of the theorem.   
\end{proof}

\subsection{Right amenable and nearly abelian semigroups of entire maps}
 
The idea motivating the following Lemma appears in \cite{HinkMartSemi} 
in the context of rational maps. Indeed, the proof of the lemma does not 
require additional arguments, but we include them for the reader's 
convenience. 

\begin{lemma}\label{lm.algebraic} Let $S$ be a nearly abelian 
semigroup of entire maps and let $G(S)$ be the group generator by 
the commutator set  $K(S)$. Then every element  $s\in S$ acts on $G(S)$ 
by  semiconjugacy as an endomorphism, that is, for every $\gamma \in G(S)$ 
there exists $\gamma_s\in G$ such that $$s\circ \gamma=\gamma_s\circ s.$$
\end{lemma}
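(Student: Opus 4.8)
The plan is to fix $s\in S$, isolate the affine maps that $s$ can ``transfer'' from the right to the left, show that these form a group on which transfer is a homomorphism, and then reduce everything to checking the generators $\gamma\in K(S)$. Concretely, I would set
$$\mathcal{T}_s=\{\gamma\in \mathrm{Aff}(\C): \text{there exists } \gamma_s\in \mathrm{Aff}(\C) \text{ with } s\circ\gamma=\gamma_s\circ s\}.$$
Since $S$ consists of non-constant entire maps it is right cancellative, so $\gamma_s$ is uniquely determined by $\gamma$. From $s\circ(\gamma\circ\gamma')=\gamma_s\circ s\circ\gamma'=\gamma_s\circ\gamma'_s\circ s$ and, upon precomposing $s\circ\gamma=\gamma_s\circ s$ with $\gamma^{-1}$, from $s\circ\gamma^{-1}=(\gamma_s)^{-1}\circ s$, one sees that $\mathcal{T}_s$ is a subgroup of $\mathrm{Aff}(\C)$ and that $\Phi_s\colon\gamma\mapsto\gamma_s$ is a group homomorphism $\mathcal{T}_s\to\mathrm{Aff}(\C)$. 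Once I know $K(S)\subset\mathcal{T}_s$ and $\Phi_s(K(S))\subset G(S)$, the preimage $\Phi_s^{-1}(G(S))$ is a subgroup of $\mathrm{Aff}(\C)$ containing $K(S)$, hence it contains $\langle K(S)\rangle=G(S)$. This gives $G(S)\subset\mathcal{T}_s$ together with $\gamma_s\in G(S)$ for every $\gamma\in G(S)$, which is exactly the claim.

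The heart of the argument is therefore the generator case, and the idea I would exploit is that $f\circ g$ and $g\circ f$ are \emph{themselves} elements of $S$, so near-abelianness applies to them. Let $\gamma\in K(S)$ be the commutator of a pair $f,g\in S$, so that $f\circ g=\gamma\circ g\circ f$. Applying the defining relation \eqref{eq.nearlyabel} to the pairs $(s,f\circ g)$ and $(s,g\circ f)$ yields $\epsilon,\delta\in K(S)$ with $s\circ f\circ g=\epsilon\circ f\circ g\circ s$ and $s\circ g\circ f=\delta\circ g\circ f\circ s$, the latter giving $g\circ f\circ s=\delta^{-1}\circ s\circ g\circ f$. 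Then
$$s\circ f\circ g=\epsilon\circ f\circ g\circ s=\epsilon\circ\gamma\circ g\circ f\circ s=\epsilon\circ\gamma\circ\delta^{-1}\circ s\circ g\circ f,$$
while on the other hand $s\circ f\circ g=s\circ(\gamma\circ g\circ f)=s\circ\gamma\circ g\circ f$. Equating the two expressions gives
$$s\circ\gamma\circ(g\circ f)=(\epsilon\circ\gamma\circ\delta^{-1}\circ s)\circ(g\circ f),$$
and right cancellation by the non-constant map $g\circ f$ (whose image is dense, so two entire maps agreeing on it agree everywhere) yields $s\circ\gamma=(\epsilon\circ\gamma\circ\delta^{-1})\circ s$. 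Hence $\gamma\in\mathcal{T}_s$ with $\gamma_s=\epsilon\circ\gamma\circ\delta^{-1}\in G(S)$, as required.

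The step I expect to demand the most care is precisely this generator case. A naive approach that tries to verify $s\circ\gamma=\gamma_s\circ s$ by commuting $s$ separately past $f$ and past $g$ immediately runs into the circular need to push affine maps of $K(S)$ across elements of $S$ — which is the very statement being proved. Passing instead to the compositions $f\circ g,\,g\circ f\in S$, invoking the defining relation of $\gamma$, and then applying right cancellation is what breaks the circularity without assuming the conclusion. The remaining verifications — that $\mathcal{T}_s$ is a group, that $\Phi_s$ is a homomorphism, and that the image of a non-constant entire map is dense so that right cancellation is legitimate — are routine and can be dispatched quickly.
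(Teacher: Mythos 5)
Your proof is correct and uses essentially the same mechanism as the paper's: compute $s\circ f\circ g$ in two ways using the nearly abelian relation on composite elements of $S$, then right-cancel the non-constant entire map $g\circ f$ to obtain $s\circ\gamma=\gamma_s\circ s$ with $\gamma_s\in G(S)$. The only differences are cosmetic — you apply the relation to the pairs $(s,f\circ g)$ and $(s,g\circ f)$ where the paper uses $(s,f)$ and $(f,s\circ g)$, and you spell out the routine extension from $K(S)$ to $G(S)$ that the paper leaves implicit.
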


\begin{proof}
    Take $s\in S$ and $\omega \in K(S)$, then there exists a pair $f,g\in S$ such
    that $f\circ g=\omega \circ g \circ f$ and 
    
    $$s\circ \omega \circ g \circ f= s\circ f\circ g= \tau \circ f \circ [s\circ g]=\tau \circ h \circ [s\circ g] \circ f,$$ for
    some $\tau, h \in K(S)$, hence $$s\circ \omega=\tau \circ h \circ s=\omega_s\circ s.$$  Thence, for every $\gamma \in G(S)$ there exists $\gamma_s\in G(S)$ such that
    $$s\circ \gamma=\gamma_s\circ s$$ as we wanted to show. 
    
\end{proof}

\begin{proof}[Proof of Theorem \ref{th.nearlyabelian}] 
By Lemma \ref{lemma.leftright},  
we can assume  that $S$ is countably generated. Let $S=\langle f_1,..., 
f_n,..\rangle$ be a countably generated nearly abelian 
semigroup, such that the set of $\{f_i\}$ forms a minimal  generating 
set. If $S_n=\langle f_1,...,f_n\rangle$, then $\{S_n\}$ 
forms a nested sequence of finitely generated semigroups with
$S=\bigcup S_n$. Hence, the semigroups  $\langle G(S), S_n\rangle$ are 
nearly abelian by Lemma \ref{lm.algebraic} and  also  these form a 
nested sequence of subsemigroups of $\langle G(S),S\rangle$ with $\langle G(S),S\rangle =\bigcup_{n\geq 0} \langle G(S), S_n\rangle$.
By Day's exhaustion theorem in Section 2.1, it is enough to show that $\langle G(S),S_n\rangle$ is right amenable.
Indeed, first notice that $G(S)\subset \mathrm{Aff}(\C)$ is right amenable as 
a subgroup of an amenable group. By Lemma \ref{lm.algebraic}, 
the semigroup $\langle G(S), f_1 \rangle$ is the semidirect product of 
the group $G$ and the cyclic semigroup $\langle f_1 \rangle$, hence is 
right amenable by Proposition \ref{LemmaKlawe}.

Now, we proceed by induction, if $\langle G(S),S_n\rangle$ is right 
amenable, then again by assumption and Lemma \ref{lm.algebraic} 
$f_{n+1}$ acts on $\langle G(S),S_n\rangle$ 
making $\langle G(S), S_{n+1} \rangle$ a semidirect product 
of right amenable subsemigroups $\langle G(S),S_n\rangle$ and $\langle f_{n+1} \rangle$,
we can apply again Proposition \ref{LemmaKlawe} to finish the proof. 
\end{proof}

\subsection{Left amenability and Stone-C\v{e}ch extension}

To prove Theorem \ref{th.AMEStoneCech}, we need the following proposition.
\begin{proposition}\label{pr.TheoremStoneCech}
Let $X$ be a compact Hausdorff space and let $S$ be a left-amenable 
semigroup of non-constant continuous endomorphisms of $X$, then $S$ 
admits an invariant probability Borel measure on $X$.
\end{proposition}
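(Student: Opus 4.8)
The plan is to exploit the duality between invariant means on the semigroup and invariant functionals on the function space $C(X)$, using the Koopman representation together with Day's theorem (Theorem \ref{th.Day}). First I would fix a basepoint $x_0 \in X$ and consider the orbit map $S \to X$ given by $s \mapsto s(x_0)$, which induces a pullback $\Phi \colon C(X) \to L_\infty(S)$ defined by $\Phi(\phi)(s) = \phi(s(x_0))$. Since each $s$ is a continuous endomorphism and $X$ is compact Hausdorff, $\Phi$ is a bounded positive linear operator sending the constant function $1$ to the constant function $1$. The key observation is that $\Phi$ intertwines the left action of $S$ on $L_\infty(S)$ with the Koopman action $K_s(\phi) = \phi \circ s$ on $C(X)$: for $\gamma \in S$ one checks $\Phi(K_\gamma \phi)(s) = \phi(s(\gamma(x_0)))$ matches the left-translate $(l_\gamma^* \Phi(\phi))(s)$, so the image of $\Phi$ lands in (a subspace of) the left-invariant functions and $\Phi$ is $S$-equivariant.

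Next I would use left amenability of $S$ to produce a left-invariant mean $M \in L_\infty(S)^*$ with $M \geq 0$ and $M(1)=1$. Pulling back through $\Phi$, define a functional $\Lambda$ on $C(X)$ by $\Lambda(\phi) = M(\Phi(\phi))$. Then $\Lambda$ is positive (as a composition of positive maps), normalized since $\Lambda(1) = M(\Phi(1)) = M(1) = 1$, and bounded. By the Riesz representation theorem, $\Lambda$ corresponds to a Borel probability measure $\mu$ on the compact Hausdorff space $X$. The invariance of $\mu$ under each $s \in S$ reduces to the left-invariance of $M$: for any $\phi \in C(X)$ and $\gamma \in S$,
\begin{equation}
\int_X \phi \circ \gamma \, d\mu = \Lambda(K_\gamma \phi) = M(\Phi(K_\gamma \phi)) = M(l_\gamma^* \Phi(\phi)) = M(\Phi(\phi)) = \int_X \phi \, d\mu,
\end{equation}
where the fourth equality is exactly left-invariance of $M$. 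This shows $\mu$ is an $S$-invariant probability measure, completing the argument.

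The main obstacle I anticipate is verifying rigorously that $\Phi$ carries the left action on $L_\infty(S)$ to the Koopman action on $C(X)$ in the correct variance, i.e. that the diagram commutes so that left-invariance of $M$ translates precisely into $S$-invariance of $\mu$ rather than into invariance for the opposite action. The bookkeeping here is delicate because the Koopman representation is an anti-homomorphism ($K_{\gamma_1 \gamma_2} = K_{\gamma_2} K_{\gamma_1}$), and one must confirm that this anti-homomorphism property pairs correctly with the left-translation action $l_\gamma(s) = \gamma s$ on the semigroup. An alternative, perhaps cleaner, route would bypass the explicit orbit map entirely: apply Theorem \ref{th.Day} directly to the Koopman representation $\rho = \rho_X$ to conclude that $S$ is $\rho$-left amenable, which yields an invariant state on $X_\rho \subset L_\infty(S)$, and then transfer this state back to an invariant functional on $C(X)$. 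I would keep both approaches in mind and present whichever makes the variance bookkeeping most transparent.
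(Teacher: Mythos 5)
Your argument is essentially the paper's own proof: the paper defines $H(\phi)(g)=\int_X\phi(g(t))\,d\sigma(t)$ for an arbitrary probability measure $\sigma$ and composes with a left-invariant mean before invoking Riesz, and your $\Phi$ is exactly this map with $\sigma=\delta_{x_0}$, so the two proofs coincide up to that immaterial choice. One small slip: $\Phi(K_\gamma\phi)(s)$ equals $\phi(\gamma(s(x_0)))=\Phi(\phi)(\gamma\circ s)$, not $\phi(s(\gamma(x_0)))$; with this correction the intertwining with $l_\gamma(s)=\gamma\circ s$ and hence your displayed invariance chain are exactly right, so the variance bookkeeping you worried about works out.
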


\begin{proof}
Let $\sigma$ be any probability Borel measure on $X$ and 
$H:C(X)\rightarrow L_\infty(S)$ is a 
continuous linear map given by 
$$H(\phi)(g)=\int_X \phi(g(t))d\sigma(t),$$
for all $\phi\in C(X)$ and $g\in S.$ If $Y=\overline{H(C(X))}$, then 
$Y$ contains $\chi_S$, the characteristic function of $S$, since $\chi_S=H(\chi_X)$ where 
$\chi_X$ is the characteristic function of the space $X$.
Hence, if  $L\in LIM(S)$, then the functional $$\ell(\phi)=L(H(\phi)), \phi\in C(X)$$ 
is continuous, linear, positive and non-zero. We claim that for 
every $h\in S$ and every $\phi \in C(X)$ we have $$\ell(\phi)=\ell(\phi(h)).$$ 
Indeed, $$\ell(\phi(h))=L(H(\phi(h)))=
L\left(\int_X [\phi(h\circ g)](t)d\sigma(t)\right)$$

$$=L(l_h(H(\phi)))=L(H(\phi))=\ell(\phi).$$
By Riesz representation theorem, $\ell$ is represented by an invariant 
probability measure $\mu.$ 
\end{proof}

\begin{proof}[Proof of Theorem \ref{th.AMEStoneCech}]
By the property of universality of the Stone-\v{C}ech compactification $\beta(\C)$, 
the whole semigroup $Ent(\C)$ extends to $\beta(\C)$ as a subsemigroup of $E(\beta(\C)).$ 
Thus, $S$ also extends to a  subsemigroup, say $\tilde{S}$, of  $E(\beta(\C))$.  Then 
$\tilde{S}$ is a left-amenable semigroup of non-constant continuous endomorphisms of $\beta(\C)$ and, 
by Proposition \ref{pr.TheoremStoneCech}, $\tilde{S}$  admits an invariant 
probability measure on $\beta(\C)$.     
\end{proof}

\subsection{Ruelle representation}

We start the subsection with the following definitions. 

\begin{definition}

Let $S\subset Ent(\C)$ be a semigroup of entire maps, we will say  that $S$ is \emph{quasiconformally 
deformable}, whenever there exists a quasiconformal automorphism $h$ of $\C$ such that 
$$h\circ S \circ h^{-1}\in Ent(\C)$$ and there is no $\gamma\in \mathrm{Aff}(\C)$ with $$h\circ S \circ h^{-1}=\gamma \circ S \circ \gamma^{-1}.$$ 
\end{definition}
We will say that the map $f$ is \emph{quasiconformally deformable}, whenever the cyclic semigroup 
$\langle f \rangle$ is quasiconformally deformable.

\begin{definition}
Let $\mathcal{F}\subset  Ent(\C)$ be an analytic family. We say 
that a semigroup $S=\langle f_1,f_2,...\rangle \subset Ent(\C)$ is 
\emph{stable over} $\mathcal{F}$ if   $f_i\in \mathcal{F}$, for all $i$, and for every 
monomorphism (representation) $\phi:S\rightarrow Ent(\C)$ 
close enough to the identity on the generators
of $S$, in the compact-open topology, with $\phi(f_i)
\in \mathcal{F}$ there exists a quasiconformal automorphism $h$ of $\C$ such that 
$$\phi(g)=h\circ g \circ h^{-1},$$ for every $g\in S.$ 

\end{definition}
We say that $f$ is \emph{stable in} $\mathcal{F}$, whenever $\langle f\rangle$ 
is stable over $\mathcal{F}$.
\begin{definition}

We call a countably generated semigroup $S\in Ent(\C)$ a \emph{$D$-semi\-group} 
 if $S=\langle f_1,f_2,... \rangle$ with
$f_i\circ f_j=f_i^2$, for all $i$ and $j$. 

\end{definition}

\begin{proposition}\label{pr.Estrella}
A $D$-semigroup $S$ is quasiconformally deformable, whenever 
a generator of $S$ is quasiconformally deformable.
\end{proposition}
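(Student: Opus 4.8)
The plan is to recast quasiconformal deformability in terms of invariant Beltrami differentials and then exploit the $D$-relations through the Beltrami operator $B_f$. Recall from the preliminaries that the open unit ball of $Fix(B_f)$ generates all quasiconformal deformations of an entire map $f$; concretely, a quasiconformal automorphism $h$ with complex dilatation $\mu$ conjugates $f$ into $Ent(\C)$ precisely when $B_f\mu=\mu$, since for holomorphic $f$ the pull-back of Beltrami forms is exactly $B_f$. Consequently, for $S=\langle f_1,f_2,\dots\rangle$ a single $h$ conjugates all of $S$ into $Ent(\C)$ if and only if its dilatation lies in $\bigcap_i Fix(B_{f_i})$. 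So the first goal is to show that a deformation of the distinguished generator automatically serves the whole semigroup.

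The heart of the argument is a short computation with $B_f$. Because pull-back of Beltrami forms is contravariant, one has $B_{f\circ g}=B_g\circ B_f$, a direct check using the chain rule $\overline{(f\circ g)'}/(f\circ g)'=\bigl((\overline{f'}/f')\circ g\bigr)\cdot \overline{g'}/g'$. Let $f_1$ denote the quasiconformally deformable generator. Applying $B$ to the $D$-relation $f_1\circ f_j=f_1^2$ yields
\begin{equation*}
B_{f_j}\circ B_{f_1}=B_{f_1\circ f_j}=B_{f_1^2}=B_{f_1}^2 .
\end{equation*}
Hence, if $\mu\in Fix(B_{f_1})$, then $B_{f_j}\mu=B_{f_j}(B_{f_1}\mu)=B_{f_1}^2\mu=\mu$ for every $j$. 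Therefore $Fix(B_{f_1})\subseteq\bigcap_i Fix(B_{f_i})$, the reverse inclusion being trivial, so the two spaces coincide. It follows that the quasiconformal map $h$ straightening any nontrivial $\mu\in Fix(B_{f_1})$ — one exists because $f_1$ is deformable — satisfies $h\circ f_j\circ h^{-1}\in Ent(\C)$ for every $j$, that is, $h\circ S\circ h^{-1}\subset Ent(\C)$.

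It remains to check that this deformation is nontrivial for $S$, i.e. that no $\gamma\in\mathrm{Aff}(\C)$ gives $h\circ S\circ h^{-1}=\gamma\circ S\circ\gamma^{-1}$. Here I would use the internal structure of a $D$-semigroup: the relations iterate to $f_i^{\,n}\circ f_j^{\,m}=f_i^{\,n+m}$, so every element is a power $f_i^{\,n}$, and, assuming as we may that the generating set is minimal, the indecomposable elements (those not expressible as a composition of two members of $S$) are precisely the generators. If such a $\gamma$ existed, then $\psi=\gamma^{-1}\circ h$ would normalize $S$ and induce a semigroup automorphism, which sends indecomposables to indecomposables; thus $h\circ f_1\circ h^{-1}=\gamma\circ f_i\circ\gamma^{-1}$ for some generator $f_i$. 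The case $i=1$ contradicts the deformability of $f_1$ at once, since it forces $h\circ\langle f_1\rangle\circ h^{-1}=\gamma\circ\langle f_1\rangle\circ\gamma^{-1}$.

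The main obstacle is the remaining case $i\neq1$, in which the deformed generator $h\circ f_1\circ h^{-1}$ happens to be affinely conjugate to a \emph{different} generator; this is the point where the set-theoretic equality in the definition, which tolerates a relabeling of generators, must be reconciled with the rigidity of the individual maps. To handle it I would argue that, since $Fix(B_{f_1})$ is infinite dimensional for a transcendental $f_1$, the set of dilatations $\mu$ for which $h_\mu\circ f_1\circ h_\mu^{-1}$ is affinely conjugate to a \emph{fixed} map $f_i$ is thin, so excluding all generators simultaneously reduces to showing that the deformation space of $f_1$ is not exhausted by this at most countable union of thin exceptional sets; a suitable $\mu\in Fix(B_{f_1})=\bigcap_i Fix(B_{f_i})$ then produces a genuinely nontrivial deformation of $S$. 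The algebraic core — the identity $B_{f_j}\circ B_{f_1}=B_{f_1}^2$ forcing $Fix(B_{f_1})=\bigcap_i Fix(B_{f_i})$ — is what lets one deformation act on the entire semigroup, and this non-degeneracy step is the delicate part to make fully rigorous.
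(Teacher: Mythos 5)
Your central computation is exactly the paper's proof: the authors take $h$ straightening the deformable generator $f_1$, let $\mu$ be its Beltrami coefficient, and observe that the $D$-relations together with $B_{f_1}\mu=\mu$ force $B_{f_i}\mu=\mu$ for every generator, hence $h\circ S\circ h^{-1}\subset Ent(\C)$. Your identity $B_{f_j}\circ B_{f_1}=B_{f_1\circ f_j}=B_{f_1}^2$, applied to a fixed point of $B_{f_1}$, is precisely the unwritten calculation behind the paper's phrase ``this follows from the relations on the generators and the $f_1$-invariance of $\mu$,'' so on this part you and the paper coincide.

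Where you diverge is that you also try to verify the second clause of the definition, namely that no $\gamma\in\mathrm{Aff}(\C)$ satisfies $h\circ S\circ h^{-1}=\gamma\circ S\circ\gamma^{-1}$. The paper's proof stops at $h\circ S\circ h^{-1}\subset Ent(\C)$ and does not address this clause at all, so your instinct that something remains to be said is sound rather than a misreading. Your reduction is reasonable: conjugation by $\gamma^{-1}\circ h$ would be a semigroup automorphism of $S$, every element of a $D$-semigroup is a power of a single generator, so indecomposables are generators and $h\circ f_1\circ h^{-1}=\gamma\circ f_i\circ\gamma^{-1}$ for some $i$; the case $i=1$ contradicts deformability of $f_1$. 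But the case $i\neq 1$ is only gestured at: the claim that the dilatations producing an affine conjugacy onto a fixed $f_i$ form a ``thin'' set, and that a countable union of such sets cannot exhaust the deformation space, is asserted without proof and is not obviously true as stated (you would at least need to know the relevant deformation space is not itself a countable union of such sets, e.g.\ via a Baire-category or positive-dimension argument). So your write-up proves everything the paper's proof proves, flags a genuine omission in the paper, but does not close that omission; as it stands the final non-degeneracy step is a sketch, not a proof.
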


\begin{proof}
Assume that the generator $f_1\in S$ is a quasiconformally deformable 
with associated quasiconformal automorphism $h$. Let 
$\mu=\frac{h_{\overline{z}}}{h_z}$ be the Beltrami coefficient of $h$.
If $\mu(f_i)\frac{\overline{f'_i}}{f'_i}=\mu$
for every generator $f_i$, then $h\circ S \circ h^{-1}\subset  Ent(\C)$. Indeed, 
this follows from the relations on the generators and the 
$f_1$-invariance of $\mu$. \end{proof}

In contrast with the setting of $D$-semigroups, we have the following theorem. 

\begin{theorem}\label{th.qcdeformable}
Let $P$ and $Q$ be two polynomials generating a free semigroup 
$S=\langle P, Q \rangle$ then $S$ is not quasiconformally deformable.
\end{theorem}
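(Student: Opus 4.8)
The plan is to argue by contraposition: assuming a genuine quasiconformal deformation of $S=\langle P,Q\rangle$ exists, I would extract a common invariant Beltrami differential, localize its support, and show via the rigidity of polynomials together with Theorem \ref{th.Several} that $P$ and $Q$ must share their preperiodic points, forcing $\langle P,Q\rangle$ to be non-free. Concretely, suppose $h$ is a quasiconformal automorphism of $\C$ with $\tilde P=h\circ P\circ h^{-1}$ and $\tilde Q=h\circ Q\circ h^{-1}$ entire; since quasiconformal conjugacy preserves the number of preimages of a generic point, $\tilde P,\tilde Q$ are polynomials of the same degrees and $\tilde S=\langle\tilde P,\tilde Q\rangle$ is again free. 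Writing $\mu=\mu_h$ for the Beltrami coefficient of $h$, solvability of the Beltrami equation gives that $\tilde P,\tilde Q$ are holomorphic if and only if $B_P(\mu)=\mu=B_Q(\mu)$; by Proposition \ref{pr.RuelleBeltrami} this exhibits $\mu$ as a common fixed point of the Beltrami operators, the duals of the Ruelle operators $P_*,Q_*$. A subtlety I would keep in sight is that a \emph{nonzero} common invariant $\mu$ does not by itself witness deformability: for example $\mu=c\,z/\bar z$ is invariant under both generators of the free semigroup $\langle z^2,\lambda z^2\rangle$, yet its solving map is a radial stretch commuting with both maps, so $h\circ S\circ h^{-1}=\mathrm{Id}\circ S\circ\mathrm{Id}^{-1}$. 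Thus deformability means that the space of common invariant Beltrami differentials is strictly larger than the one generated by $\mathrm{Aff}(\C)$ and the quasiconformal centralizer of $S$; equivalently, that the reduced deformation space of $S$ is nontrivial.

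Next I would localize the support of a putative nontrivial $\mu$. Near $\infty$ the conjugacy $h$ must intertwine the B\"ottcher coordinates of $P$ and $\tilde P$ supplied by Proposition \ref{pr.Bottcher}; since these conjugate $P$ and $\tilde P$ to the same power map $z\mapsto z^{\deg P}$, the composition $\phi_{\tilde P}\circ h\circ\phi_P^{-1}$ commutes with $z^{\deg P}$ and is therefore a rotation, so $h$ is conformal on the basin of infinity and $\mu=0$ there; the same holds for $Q$. On the Julia set I would invoke McMullen's theorem that a rational map carrying a measurable invariant line field on a positive-measure subset of its Julia set is a flexible Latt\`es example: polynomials are never Latt\`es, so $\mu=0$ a.e.\ on $J(P)\cup J(Q)$. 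Consequently $\mathrm{supp}\,\mu\subseteq\mathrm{int}\,K_P\cap\mathrm{int}\,K_Q$, that is, any nontrivial deformation must live in the bounded Fatou components common to both maps.

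It remains to treat the bounded components, and this is where I expect the main work. On a common bounded periodic component $D$ I would pass, exactly as in the proof of Theorem \ref{th.preperiodic}, through a Riemann map $\mathbb D\to D$ to a semigroup $\langle B_P,B_Q\rangle$ of finite Blaschke products, where a nontrivial common invariant $\mu$ persists as a nontrivial common deformation. If this Blaschke semigroup contains a non-exceptional element, then by Theorem \ref{th.AmenabilityI} it is right amenable and, by Proposition \ref{pr.ErLevRitt} together with Lemma \ref{lm.preperiodic}, $P$ and $Q$ share their preperiodic points; Theorem \ref{th.Several} then makes $\langle P,Q\rangle$ non-free, a contradiction. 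In the remaining exceptional case the Blaschke generators are rotations or power maps (the Siegel and power situations isolated via Lemma \ref{lm.TitsD}), and I would show by the explicit B\"ottcher and linearization rigidity---mirroring the $\langle z^2,\lambda z^2\rangle$ computation above---that every common invariant Beltrami differential there arises from an affine or centralizing map, so the deformation is trivial.

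The hard part, and the step I expect to be most delicate, is precisely this bounded-component analysis: correctly separating genuine deformations from the affine-plus-centralizer contributions and controlling the possibly infinite chain of nested common Fatou components, a difficulty absent in the $D$-semigroup case of Proposition \ref{pr.Estrella}, where invariance under a single generator already propagates. The affine subcase, in which a generator has degree one, is handled separately using the rigidity of $\mathrm{Aff}(\C)$. Combining the three regimes yields $\mu=0$ modulo trivial deformations, so a free polynomial semigroup admits no nontrivial quasiconformal deformation.
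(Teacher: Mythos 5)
Your overall strategy (extract a common invariant Beltrami differential, localize its support, derive amenability and hence non-freeness) matches the paper's in outline, but your localization step is wrong in a way that breaks the argument. You claim $\phi_{\tilde P}\circ h\circ \phi_P^{-1}$ commutes with $z^{\deg P}$ and ``is therefore a rotation,'' hence $\mu=0$ on the basin of infinity. Quasiconformal maps commuting with $z^d$ near infinity are far from being rotations: $z\mapsto z|z|^{s}$ is one, and more generally any Beltrami datum placed on a fundamental annulus $\{r<|w|<r^{d}\}$ and spread by the dynamics yields an invariant differential supported on $A_\infty(P)$ --- this is exactly the Branner--Hubbard wringing construction, which shows that the deformation space of a single polynomial lives precisely on the basin of infinity. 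So the basin of infinity is the essential case, not one you may discard. The paper argues in the opposite direction: it first shows that an $S$-invariant $\mu$ must be \emph{nonzero} on $A_\infty(P)$ (if it vanished there one would get $K(P)\subset K(Q)$ and, by symmetry, $K(Q)\subset K(P)$, forcing $J(P)=J(Q)$ and hence right amenability by Theorem \ref{th.Several}); it then uses the fact that such a $\mu$ is organized by the $P$-invariant equipotential foliation, whose $Q$-invariance forces $A_\infty(P)$ to be completely $Q$-invariant, so the harmonic measure at infinity is invariant for both maps, and Brolin's theorem identifies it with the measure of maximal entropy of each; Theorem \ref{th.AmenabilityI} then gives right amenability, contradicting freeness.

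A second genuine gap: your appeal to ``McMullen's theorem'' that a rational map carrying a measurable invariant line field on a positive-measure subset of its Julia set must be flexible Latt\`es is an appeal to the no-invariant-line-fields conjecture, which is open for general polynomials, so you cannot use it to kill $\mu$ on $J(P)\cup J(Q)$. Your treatment of the exceptional case $\langle z^n,\lambda z^m\rangle$ (a normalized deformation commuting with $z^n$ is the identity on the unit circle, hence commutes with $\lambda z^m$) does coincide with the paper's, and your remark that a nonzero invariant $\mu$ need not by itself witness deformability is a fair reading of the definition; but without a correct handling of the basin of infinity the proof does not go through, and the bounded-component analysis you flag as the hard part is not where the paper's difficulty actually lies.
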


\begin{proof}
First let us assume one of the polynomials, say $P$, is 
non-exceptional and proceed by contradiction. Suppose that $S=\langle P,Q \rangle$ 
is quasiconformally deformable and $\mu$ is a non-zero $S$-invariant 
Beltrami differential. We claim that $\mu$
is non-zero on the basins of attraction of infinity $A_\infty(P)$ and $A_\infty(Q)$. 
Indeed, assume $\mu$ is zero in $A_\infty(P)$, the 
basin of attraction of infinity of $P$. By assumptions of the theorem, 
Theorem \ref{th.AmenabilityI} and Theorem \ref{th.Several}, 
we can assume that $J(P) \cap J(Q)\neq J(P)$. If  $\mu=0$ on $A_\infty(P)$, 
then $J(P)\subset K(Q)$ the filled Julia set of $Q$, and $K(P)\subset K(Q)$.  
But $\mu$  is also invariant for $Q$, therefore $\mu=0$ in $A_\infty(Q)$ 
and $K(Q)\subset K(P)$ which contradicts that $J(P)\cap J(Q)\neq J(P).$
Hence, $J(P)=J(Q)$ and, by Theorem \ref{th.Several},  $S$ is 
right amenable which is a contradiction. So $\mu$ is 
non-zero  on $A_\infty(P)$. 

The restriction $\nu=\mu|_{A_\infty(P)}$ 
is determined by  the equipotential foliation since $\mu$ is 
constant on almost every  leaf of this foliation. The set $A_\infty(P)$ 
is saturated by the equipotential foliation and 
$\mu$ is invariant for $Q$, then $A_\infty(P)$ is completely 
invariant for $Q$. The harmonic invariant measure $m_P$ 
for $P$ in $A_\infty(P)$ with respect to infinity 
is a harmonic invariant measure for $Q$. By Brolin theorem $P$ 
and $Q$ share the measure of maximal entropy, by 
Theorem \ref{th.AmenabilityI} $S$ is right amenable which 
contradicts the hypothesis.

Finally, let us consider the case where both $P$ and $Q$ are 
exceptional. Without loss of generality we can assume that 
$P$ is $z^n$ or a Tchebichev polynomial. Following similar 
arguments as above, we can show that $J(Q)=J(P)$ and then 
$Q$ is of the same kind as $P$, either a monomial 
$\lambda z^n$ with $|\lambda|=1$ or a Tchebichev polynomial. 
So, if $S=\langle z^n, \lambda z^m \rangle$ and $f$ is a 
quasiconformal deformation of $S$ fixing $\{0,1,\infty\}$ 
with $f\circ z^n=z^n\circ f$,  then $f$ is the identity in the 
unit circle $\mathbb{S}^1$. Therefore, $f$ commutes with 
$\lambda z^m$ and $f\circ S \circ f^{-1}=S$ is a trivial deformation. 
If both $P$ and $Q$ are Tchebichev, we can 
apply similar arguments. 
\end{proof}

We proof Theorem \ref{th.poldeformable}.

\begin{proof}[Proof of Theorem \ref{th.poldeformable}]
The proof is an immediate consequence of Theorems \ref{th.AmenabilityI} 
and \ref{th.qcdeformable}. 
\end{proof}

For $D$-semigroups of entire maps we can show the following result.

\begin{lemma}\label{lm.LEVINA}
Every subsemigroup of a $D$-semigroup is right amenable.
\end{lemma}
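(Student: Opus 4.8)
The plan is to exploit the rigid algebraic structure forced by the defining relation $f_i\circ f_j=f_i^2$. First I would record the purely algebraic consequence that, in any $D$-semigroup $S=\langle f_1,f_2,\dots\rangle$, every element is a power of a single generator and that composition obeys a simple additive rule. Indeed, from $f_i\circ f_j=f_i^2$ one gets $f_i^{\,n}\circ f_j=f_i^{\,n+1}$ for every $n\ge 1$, and an induction on word length collapses any product of generators to $f_{i_1}\circ f_{i_2}\circ\cdots\circ f_{i_k}=f_{i_1}^{\,k}$. Peeling off one factor at a time then yields the product rule
\[
f_i^{\,k}\circ f_j^{\,l}=f_i^{\,k+l},
\]
which depends only on the index $i$ of the left factor and on the sum of the exponents. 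In particular the right translation $r_\gamma$ by an element $\gamma=f_j^{\,l}$ acts on $S$ by $f_i^{\,k}\mapsto f_i^{\,k+l}$: it shifts the exponent by $l$ and leaves the index untouched. This is the single structural fact that drives everything, and I would stress that it requires neither non-constancy nor cancellativity, only associativity.

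Next I would reduce the general statement to a finitely generated one. By Lemma \ref{lemma.leftright} it suffices to treat countable subsemigroups $T\subset S$, and such a $T$ is the nested (hence directed) union of the finitely generated subsemigroups arising from an enumeration of its elements; by Day's exhaustion theorem (Section 2.1) it is then enough to prove that each finitely generated subsemigroup is right amenable. Fix $T=\langle a_1,\dots,a_n\rangle$ with $a_m=f_{p_m}^{\,s_m}$. By the product rule a nonempty product $a_{m_1}\cdots a_{m_r}$ equals $f_{p_{m_1}}^{\,s_{m_1}+\dots+s_{m_r}}$, so $T$ is the homomorphic image, under $(p_m,e)\mapsto f_{p_m}^{\,e}$, of the ``free model''
\[
\widehat T=\bigl\{(p_m,e): 1\le m\le n,\ e\in E_m\bigr\},\qquad (p,e)(p',e')=(p,e+e'),
\]
where $E_m=s_m+\bigl(\{0\}\cup\langle s_1,\dots,s_n\rangle\bigr)$ is the set of achievable exponents over the index $p_m$. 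Each $E_m$ is contained in $g\mathbb{N}$ and contains every sufficiently large multiple of $g:=\gcd(s_1,\dots,s_n)$.

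On $\widehat T$ I would build a right-invariant mean as a product of a measure on indices and a translation-invariant mean on exponents. Choose any invariant mean $m_0$ on the amenable semigroup $(\mathbb{N},+)$ vanishing on finite sets, and set $m_\infty(B)=m_0(\{q:gq\in B\})$; then $m_\infty$ is supported on $g\mathbb{N}$, is invariant under $e\mapsto e+l$ for every $l\in g\mathbb{N}$, and assigns each $E_m$ full mass. Putting $\mu(A)=\tfrac1n\sum_{m}m_\infty(A_{p_m})$, with $A_{p_m}=\{e:(p_m,e)\in A\}$ the exponent section, one checks $\mu(\widehat T)=1$ and, using that every exponent occurring in $T$ is a multiple of $g$ together with the translation-invariance of $m_\infty$, that $\mu(r_\gamma^{-1}A)=\mu(A)$. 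Finally I would push $\mu$ forward along the surjective homomorphism $\pi:\widehat T\to T$ by $\mu_T(A)=\mu(\pi^{-1}(A))$; since $\pi$ intertwines right translations, $\mu_T$ is a right-invariant probability charge on $T$, and this last step is exactly what absorbs any coincidences $f_{p}^{\,e}=f_{p'}^{\,e'}$ (for instance those coming from finite-order affine generators).

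I expect the main obstacle to be this last circle of technical points: producing the exponent mean $m_\infty$ with the correct support and invariance, verifying that the fibers $E_m$ have full $m_\infty$-measure (the $\gcd$/numerical-semigroup bookkeeping), and checking that the pushforward along $\pi$ genuinely disposes of the identifications among the maps $f_{p_m}^{\,e}$. Everything else is either the elementary collapse of words or a direct appeal to Lemma \ref{lemma.leftright} and Day's exhaustion theorem.
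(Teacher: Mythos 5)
Your proof is correct, but it takes a genuinely different route from the paper's. The paper's argument is a three-line reduction: it observes that every element of a $D$-semigroup is an iterate of a generator, asserts that every subsemigroup of a $D$-semigroup is again a $D$-semigroup, reduces to the finitely generated case by Day's exhaustion theorem, and then simply invokes Theorem 28 of \cite{CMAmenability} for finitely generated $D$-semigroups. You instead construct the right-invariant charge explicitly: the collapse $f_i^k\circ f_j^l=f_i^{k+l}$, the auxiliary semigroup $\widehat T$ with product $(p,e)(p',e')=(p,e+e')$, the mean $m_\infty$ on exponents concentrated on large multiples of $g=\gcd(s_1,\dots,s_n)$, and the pushforward along $\pi$. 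This buys two things. First, it is self-contained: the only external inputs are Lemma \ref{lemma.leftright}, Day's exhaustion theorem, and amenability of $(\mathbb{N},+)$, whereas the paper outsources the finitely generated case to a citation. Second, and more substantively, your treatment of generators $f_{p_m}^{\,s_m}$ with \emph{varying} exponents addresses a point the paper elides: a subsemigroup of a $D$-semigroup need not itself be a $D$-semigroup in the literal sense of the definition --- for instance $T=\langle f_1, f_2^2\rangle$ forces both $f_1$ and $f_2^2$ into any generating set, yet $f_1\circ f_2^2=f_1^3\neq f_1^2$ when $f_1$ has infinite order --- so an argument covering mixed exponents, such as your $\gcd$ bookkeeping, is genuinely needed. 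The technical points you flag as potential obstacles (full $m_\infty$-mass of the sets $E_m$ via the Frobenius/numerical-semigroup fact, invariance of $m_\infty$ under translation by multiples of $g$, and the intertwining $r_{\pi(\hat t)}\circ\pi=\pi\circ r_{\hat t}$ that makes the pushforward right-invariant) all check out.
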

\begin{proof}
Since every element of a $D$-semigroup $S$ is an iteration of a 
generator, every subsemigroup of $T\subset S$ is a $D$-semigroup. 
So, it is enough to show  that the $D$-semigroup $S=\langle f_1,f_2,... \rangle$ 
is right amenable.  By Day's exhaustion theorem it is enough 
that the finitely generated $D$-semigroup $S_n=\langle f_1,...,f_n\rangle$ is
right amenable. Indeed, this is the case by Theorem 28 in \cite{CMAmenability}.
\end{proof}

The discussion around dynamics of general semigroups of entire maps is 
yet unclear even in the right amenable case. However, 
we provide an example of a quasiconformally stable  
semigroup of entire maps over a suitable family $\mathcal{F}$.

\begin{definition}
For an entire map $f$, let 
$$Deck(f)=\{\gamma \in \mathrm{Aff}(\C): f\circ \gamma=f\}$$    
be the \emph{group of deck transformations}. For every $\gamma\in Deck(f)$ we define 
$$f_\gamma=\gamma \circ f \circ \gamma^{-1}=\gamma \circ f$$ and then
$$\Gamma(f)=\langle f_\gamma \rangle_{\gamma\in Deck(f)}$$ 
is a subsemigroup of $Ent(\C)$. We say that $Deck(f)$ is \emph{transitive} if it
acts transitively on  fibers $f^{-1}(z)$, for almost every point $z\in \C$.
\end{definition}

For a family $B\subset Deck(f)$, we let $\Gamma_B=\{f_\gamma|\gamma\in B\}.$
In contrast with rational maps, the group $Deck(f)$ may be  infinite. Moreover, 
typically $\Gamma(f)$ is an infinitely generated subsemigroup of $Ent(\C)$. 

\begin{proposition}\label{prop.tresestrellas}
Let $\mathcal{F}\subset Ent(\C)$ be an analytic family, and $f\in \mathcal{F}$. Assume that 
$Deck(f)$ is transitive.  Then  $\Gamma_B\subset \Gamma(f)$ is  stable over $\mathcal{F}$, 
for every finite $B\subset Deck(f)$ containing $\tilde{\gamma}$ with $f_{\tilde{\gamma}}$
stable over $\mathcal{F}$.
\end{proposition}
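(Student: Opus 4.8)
The plan is to first identify the algebraic structure of $\Gamma(f)$ and then propagate the stability of the single generator $f_{\tilde{\gamma}}$ to the whole family by a Beltrami-coefficient argument, combined with a monodromy argument powered by the transitivity of $Deck(f)$. First I would observe that $\Gamma(f)$ is a $D$-semigroup: for any $\gamma,\delta\in Deck(f)$ one has $f_\gamma\circ f_\delta=\gamma\circ(f\circ\delta)\circ f=\gamma\circ f\circ f$ and likewise $f_\gamma^2=\gamma\circ(f\circ\gamma)\circ f=\gamma\circ f\circ f$, so $f_\gamma\circ f_\delta=f_\gamma^2$ for all generators. In particular the subsemigroup $\Gamma_B$ generated by $\{f_\gamma:\gamma\in B\}$ is a finitely generated $D$-semigroup and $f_{\tilde{\gamma}}\circ f_\gamma=f_{\tilde{\gamma}}^2$ for every $\gamma\in B$. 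I would also record two elementary facts: $Deck(f_{\tilde{\gamma}})=Deck(f)$ (since $f_{\tilde{\gamma}}\circ\delta=\tilde{\gamma}\circ f\circ\delta$ equals $f_{\tilde{\gamma}}$ exactly when $f\circ\delta=f$), so $Deck(f)$ acts transitively on almost every fibre of $f_{\tilde{\gamma}}$; and $Deck(f)$ is a discrete subgroup of $\mathrm{Aff}(\C)$. Now let $\phi\colon\Gamma_B\to Ent(\C)$ be a monomorphism close to the identity on the generators with $\phi(f_\gamma)\in\mathcal{F}$. Since $f_{\tilde{\gamma}}$ is stable over $\mathcal{F}$, there is a quasiconformal automorphism $h$ of $\C$ with $\phi(f_{\tilde{\gamma}})=h\circ f_{\tilde{\gamma}}\circ h^{-1}$; because the conjugacy furnished by structural stability depends continuously on $\phi$ and is the identity when $\phi$ is, I may take $h$ close to the identity.

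Next I would show that the same $h$ conjugates $f_\gamma$ to an entire map, for every $\gamma\in B$. This is exactly the mechanism of Proposition \ref{pr.Estrella}: the Beltrami coefficient $\mu=\mu_h$ is $f_{\tilde{\gamma}}$-invariant, and from $f_{\tilde{\gamma}}\circ f_\gamma=f_{\tilde{\gamma}}^2$ we get $(h\circ f_{\tilde{\gamma}}\circ h^{-1})\circ(h\circ f_\gamma\circ h^{-1})=h\circ f_{\tilde{\gamma}}^2\circ h^{-1}=(h\circ f_{\tilde{\gamma}}\circ h^{-1})^2$, which is entire. Since post-composition with a holomorphic map leaves the dilatation unchanged, the dilatation of $h\circ f_\gamma\circ h^{-1}$ must vanish, so $H_\gamma:=h\circ f_\gamma\circ h^{-1}$ is entire.

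It remains to identify $H_\gamma$ with $\phi(f_\gamma)$, which is the heart of the argument. Applying the homomorphism $\phi$ to $f_{\tilde{\gamma}}\circ f_\gamma=f_{\tilde{\gamma}}^2$ gives $\phi(f_{\tilde{\gamma}})\circ\phi(f_\gamma)=\phi(f_{\tilde{\gamma}})^2=(h\circ f_{\tilde{\gamma}}\circ h^{-1})\circ H_\gamma$, that is $\phi(f_{\tilde{\gamma}})\circ\phi(f_\gamma)=\phi(f_{\tilde{\gamma}})\circ H_\gamma$. Cancelling the homeomorphism $h$ on the left (legitimate, since $h$ is injective) reduces this to $f_{\tilde{\gamma}}\circ A=f_{\tilde{\gamma}}\circ(f_\gamma\circ h^{-1})$, where $A=h^{-1}\circ\phi(f_\gamma)$. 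The obstruction is that $f_{\tilde{\gamma}}$ is non-injective, so I cannot cancel it on the left; this is precisely where transitivity of $Deck(f)$ enters. For each $w$ the points $A(w)$ and $f_\gamma(h^{-1}(w))$ lie in the same fibre of $f_{\tilde{\gamma}}$, so by transitivity they are related by some $\delta_w\in Deck(f)$; discreteness of $Deck(f)$ and continuity then force $\delta_w\equiv\delta$ constant, whence $A=\delta\circ f_\gamma\circ h^{-1}$ and $\phi(f_\gamma)=h\circ\delta\circ f_\gamma\circ h^{-1}$. Finally, since $\phi(f_\gamma)$ is close to $f_\gamma$ and $h$ is close to the identity, $\delta$ must be close to the identity on the image of $f_\gamma$; discreteness of $Deck(f)$ yields $\delta=\mathrm{id}$, so $\phi(f_\gamma)=h\circ f_\gamma\circ h^{-1}$. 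As this holds for every $\gamma\in B$ with a single $h$, the semigroup $\Gamma_B$ is stable over $\mathcal{F}$.

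The main obstacle, and the only place the hypotheses are genuinely used, is the left-cancellation of the non-injective map $f_{\tilde{\gamma}}$ in the last step: transitivity of $Deck(f)$ converts the failure of injectivity into a single ambiguity $\delta\in Deck(f)$, and discreteness of the deck group together with the closeness of $\phi$ to the identity removes it. The one technical point to treat with care is the measure-zero set of fibres on which transitivity may fail; I expect to handle it by continuity and the identity principle for the entire maps $A$ and $f_\gamma\circ h^{-1}$.
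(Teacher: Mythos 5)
Your proof is correct and follows essentially the same route as the paper's: use stability of $f_{\tilde{\gamma}}$ to produce the quasiconformal conjugacy $h$, exploit the $D$-semigroup relations $f_{\tilde{\gamma}}\circ f_\gamma=f_{\tilde{\gamma}}^2$ to make $h\circ f_\gamma\circ h^{-1}$ entire, and invoke transitivity of $Deck(f)$ to propagate the conjugacy to the remaining generators. In fact you supply the details the paper leaves implicit --- the residual ambiguity $\phi(f_\gamma)=h\circ\delta\circ f_\gamma\circ h^{-1}$ with $\delta\in Deck(f)$ and its removal via discreteness of $Deck(f)$ --- at the cost of one unstated but standard normalization, namely that the conjugacy $h$ furnished by structural stability can be taken close to the identity when $\phi$ is.
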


\begin{proof} If $B\subset Deck(f)$ and $\tilde{\gamma}\in B$, then 
 $\Gamma_B$ is a finitely generated subsemigroup of $\Gamma(f)$. Let 
$\phi:\Gamma_B\rightarrow \Gamma'\subset Ent(\C)$ be an isomorphism such that
$$\phi(f_\gamma)\in \mathcal{F}$$ for every $\gamma \in B$ and $\phi(f_\gamma)$ 
is a sufficiently closed to $f_\gamma$ on compact subsets of $\C$.

By assumption, the map $f_{\tilde{\gamma}}$ is stable over $\mathcal{F}$. Let $h$ 
be a quasiconformal homeomorphism conjugating  
$\phi(f_{\tilde{\gamma}})=h\circ f_{\tilde{\gamma}} \circ h^{-1}$.  
We claim that $\phi(g)=h\circ g \circ h^{-1}$, for every $g\in \Gamma_B$.
Indeed, by assumption $$Deck(\phi(f_{\tilde{\gamma}}))=h\circ
Deck(f_{\tilde{\gamma}})\circ h^{-1}= h\circ Deck(f)\circ h^{-1}$$ which 
implies that $Deck(\phi(f_{\tilde{\gamma}}))$ is transitive, and since 
every element of $\Gamma_B$ is an iterate of a generator, 
we are done. \end{proof}

Let $f$ be an entire map in class $\mathcal{R}$ and 
let $R$ be a rectifiable ray containing all critical  and 
asymptotic values of $f$. Let $U=\C\setminus R$ 
and $V=f^{-1}(U)$, then $V=\bigcup_{i\in \mathbb{N}} V_i$ 
such that $V_i$ are the simply connected components of $V$ 
and the restrictions $f:V_i\rightarrow U$ are holomorphic 
homeomorphisms. Set $f_i=f|_{V_i}$. For every bijection 
of the natural numbers $b\in Bij(\mathbb{N})$, 
let us define a piecewise conformal automorphism of $V$ by 
$h_{b}(z)= f^{-1}_{b(i)}\circ f_i(z),$ for  $z\in V_i.$ Then $h_{b}$ 
is a measurable bijection conformal almost everywhere on $\C$.
Denote by $FD(f)$ the group generated by $h_{b}$, $b\in Bij(\mathbb{N})$. 
Hence $FD(f)$ is isomorphic to $Bij(\mathbb{N})$ and it can be 
regarded as the full deck group of $f$. In fact $f(\gamma)=f$,  for every $\gamma \in FD(f)$ 
almost everywhere. The group $FD(f)$ acts on $L_1(\C)$ by the push-forward operators $\gamma_*$ given 
by  $$\gamma_*:\phi \rightarrow \phi(\gamma^{-1})[(\gamma^{-1})']^2$$
or, equivalently, acts on measurable integrable quadratic differentials by
$$\gamma_*(\phi(z) dz^2)=[\phi(z)dz^2]\circ (\gamma^{-1}(z))=
\gamma_*(\phi(z))dz^2,$$ for every $\gamma\in FD(f)$.
For every subset $\Gamma\subset FD(f)$ and $\gamma \in \Gamma$ 
define $f_\gamma=\gamma\circ f \circ \gamma^{-1}=\gamma \circ f.$ 
Let $S(\Gamma)$ be the semigroup generated by 
$f_\gamma$,  $\gamma \in \Gamma$. Then, $S(\Gamma)$ consists of measurable non-singular, 
with respect to the Lebesgue measure, endomorphisms which are holomorphic 
almost everywhere on $\C$. For instance, if $\Gamma \subset Deck(f)$ then $S(\Gamma)\subset Ent(\C).$ 
We construct 
the Ruelle representation $\rho$ of the semigroup $S(\Gamma)$ for 
every subset $\Gamma\subset FD(f)$ given on generators as follows:
$$\rho(f_\gamma)(\phi)=\gamma_* \circ f_*(\phi)$$ sending 
$S(\Gamma)$ into $End(L_1(A))$ for every completely invariant 
set $A\subset \C$ of positive Lebesgue measure. 
As an observation, we state the following proposition.

\begin{proposition}\label{pr.Ruellerightamenable}
For every completely invariant set $A\subset \C$ of positive 
Lebesgue measure and every subset $\Gamma\subset FD(f)$, the 
Ruelle representation $\rho:S(\Gamma)\rightarrow L_1(A)$ 
is injective. Moreover, $S(\Gamma)$ (and 
hence $\rho(S(\Gamma))$) is right amenable. 
\end{proposition}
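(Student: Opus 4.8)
The plan is to first identify the algebraic type of $S(\Gamma)$, which makes right amenability immediate, and then to establish injectivity through the Ruelle--Beltrami duality.

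First I would observe that $S(\Gamma)$ is a $D$-semigroup. For any $\gamma,\delta\in\Gamma$ the defining relation $f\circ\gamma=f$ (valid almost everywhere for $\gamma\in FD(f)$) gives
\[
f_\gamma\circ f_\delta=\gamma\circ f\circ\delta\circ f=\gamma\circ f\circ f=\gamma\circ f^2=f_\gamma^2,
\]
so the generators $\{f_\gamma\}_{\gamma\in\Gamma}$ satisfy $f_i\circ f_j=f_i^2$. Consequently every word in the generators collapses to $f_\gamma^{\,k}=\gamma\circ f^k$, where $\gamma$ is the leftmost letter and $k$ the word length, and $S(\Gamma)=\{\gamma\circ f^k:\gamma\in\Gamma,\ k\ge 1\}$ is a $D$-semigroup in the abstract sense. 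The proof of Lemma \ref{lm.LEVINA} is purely algebraic (Day's exhaustion theorem together with Theorem 28 in \cite{CMAmenability}), so it applies verbatim to $S(\Gamma)$ even though its elements need not be literally entire maps; hence $S(\Gamma)$ is right amenable. Once injectivity is in hand, $\rho(S(\Gamma))\cong S(\Gamma)$ is right amenable as well, which gives the final assertion.

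Next, to prepare the injectivity argument, I would verify that $\rho$ genuinely realizes the Ruelle representation, i.e.\ that $\rho(f_\gamma^{\,k})$ is the push-forward operator of the map $\gamma\circ f^k$. Dualizing $f\circ\gamma=f$ through the functoriality of push-forward yields $f_*\circ\gamma_*=f_*$, whence
\[
\rho(f_\gamma^{\,k})=(\gamma_*\circ f_*)^k=\gamma_*\circ(f_*\circ\gamma_*)^{k-1}\circ f_*=\gamma_*\circ f_*^{\,k}=(\gamma\circ f^k)_*.
\]
Thus $\rho(s)=s_*$ for every $s\in S(\Gamma)$, and injectivity of $\rho$ amounts to recovering each semigroup element from its Ruelle operator.

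For injectivity proper, suppose $\rho(s)=\rho(t)$, that is $s_*=t_*$ on $L_1(A)$. By Proposition \ref{pr.RuelleBeltrami}, applied on the completely invariant set $A$, the dual operators coincide, $B_s=B_t$ on $L_\infty(A)$, i.e.\ $\varphi(s)\,\overline{s'}/s'=\varphi(t)\,\overline{t'}/t'$ for every $\varphi\in L_\infty(A)$ and almost every point of $A$. Taking $\varphi\equiv 1$ gives $\overline{s'}/s'=\overline{t'}/t'$, and cancelling this unimodular factor leaves $\varphi(s)=\varphi(t)$ for all $\varphi$; evaluating on a countable family of continuous functions separating points of $\C$ forces $s=t$ almost everywhere on $A$. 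Since $A$ has positive Lebesgue measure and the elements of $S(\Gamma)$ are conformal almost everywhere, coincidence on $A$ propagates by the identity principle to equality of $s$ and $t$ as elements of $S(\Gamma)$, so $\rho$ is injective. The hard part will be this recovery step: justifying the $L_1(A)$--$L_\infty(A)$ duality on the restricted, merely completely invariant set $A$ and upgrading almost-everywhere equality on $A$ to genuine equality of the semigroup elements. By contrast, the $D$-semigroup reduction reduces the right-amenability claim to the already-established Lemma \ref{lm.LEVINA}, so it presents no real difficulty.
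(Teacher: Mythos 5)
Your proposal is correct and follows essentially the same route as the paper: the relation $f_{\gamma_1}\circ f_{\gamma_2}=f_{\gamma_1}^2$ exhibits $S(\Gamma)$ as a $D$-semigroup, and right amenability is then exactly Lemma \ref{lm.LEVINA}. Your Ruelle--Beltrami duality argument for injectivity (including the step you flag as hard, upgrading almost-everywhere agreement on $A$ to equality of semigroup elements) is actually more detailed than the paper, which dismisses that claim with the single phrase that it follows ``from the construction of the Ruelle operator.''
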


\begin{proof}
For every pair $\gamma_1,\gamma_2\in S(\Gamma)$ we have 
$$f_{\gamma_1} \circ f_{\gamma_2}=f^2_{\gamma_1}.$$ Thus, the 
conclusion follows from the construction of Ruelle operator and 
from Lemma \ref{lm.LEVINA}. 
\end{proof}

We are ready to formulate the connection of left amenability of 
Ruelle representations with the existence of invariant Beltrami differentials. 
We follow the arguments of the proof of Proposition 39 in \cite{CMAmenability}.

\begin{proposition}\label{pr.tresestrellasveras}
Let $f$ be an entire map in class $\mathcal{R}$. Let $A$ be a completely 
invariant measurable set and $\Gamma\subset FD(f)$ be a subset, 
then $S(\Gamma)$ is $\rho$-left amenable, whenever $A$ does not 
admit an invariant Beltrami differential.
\end{proposition}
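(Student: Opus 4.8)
The plan is to establish the contrapositive-flavored duality between left amenability of the Ruelle representation and the existence of invariant Beltrami differentials, exactly as is done for rational maps in Proposition 39 of \cite{CMAmenability}. The central object is the pairing between $L_1(A)$, on which $S(\Gamma)$ acts by the Ruelle operators $\rho(f_\gamma)=\gamma_*\circ f_*$, and its dual $L_\infty(A)$, on which the adjoint action is by the Beltrami operators. By Proposition \ref{pr.RuelleBeltrami}, $B_f$ is the dual operator to $f_*$, and correspondingly the dual of $\gamma_*$ is the pull-back $B_\gamma$; hence the dual representation $\rho^*$ of $S(\Gamma)$ on $L_\infty(A)$ is generated by the Beltrami operators $B_{f_\gamma}=B_f\circ B_\gamma$. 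The key observation is that a Beltrami differential $\mu\in L_\infty(A)$ that is invariant for $f$ (and for the deck action) is precisely a common fixed point of every $B_{f_\gamma}$, i.e. a fixed point of the dual action $\rho^*$.

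First I would set up the functionals $f_{(b,b^*)}(s)=b^*(\rho(s)b)$ and the invariant subspace $X_\rho\subset L_\infty(S(\Gamma))$ from the definition of $\rho$-amenability, and observe that $\rho$-left amenability means $X_\rho$ carries a left-invariant mean. Then I would invoke Day's theorem (Theorem \ref{th.Day}) together with the separation principle (Proposition \ref{pr.separationprinc}): the natural route is to show that left amenability of $\rho$ forces the relevant Ruelle operators to be mean ergodic, and then to use the separation of fixed points to transfer information between $L_1(A)$ and $L_\infty(A)$. Concretely, by Proposition \ref{pr.Ruellerightamenable} the semigroup $S(\Gamma)$ is right amenable and each generator satisfies $f_{\gamma_1}\circ f_{\gamma_2}=f_{\gamma_1}^2$, so the action is controlled by the single contraction $f_*$ (together with the isometries $\gamma_*$); since $\|f_*\|_1\le 1$ and $f_*$ is positive, Theorem \ref{th.KORNFELIN} lets me replace weak almost periodicity by mean ergodicity.

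The heart of the argument is the following dichotomy. If $A$ admits no invariant Beltrami differential, then the dual action $\rho^*$ has no non-zero fixed point in $L_\infty(A)$ beyond those forced by the constant-function part of $X_\rho$; by the separation principle applied to the mean-ergodic Ruelle operator, the absence of such dual fixed points is exactly the obstruction that a left-invariant mean would have to overcome. I would argue that a left-invariant mean on $X_\rho$ would, via the pairing $\langle \mu, \phi\rangle$ between an invariant $\mu\in L_\infty(A)$ and $\phi\in L_1(A)$, produce a non-trivial $\rho^*$-fixed Beltrami differential on $A$; since no such differential exists by hypothesis, the invariance of the mean is automatically compatible and $X_\rho$ inherits an invariant state from the mean ergodicity of $f_*$ on $L_1(A)$. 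In other words, the vanishing of $Fix(B_f)\cap L_\infty(A)$ removes precisely the obstacle to left $\rho$-amenability.

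The hard part will be making the transfer between $L_1(A)$ and its dual precise, because $L_1(A)$ is not reflexive: a left-invariant mean lives in the double dual, so I must be careful that the fixed point produced by the separation principle genuinely descends to an honest invariant Beltrami differential in $L_\infty(A)$ rather than only to an element of a larger bidual. I expect to handle this by exploiting positivity and the contraction property $\|f_*\|_1\le 1$ to pass to Cesàro averages, whose weak-$*$ limits remain in $L_\infty(A)$, and by using that $A$ is completely invariant so that the deck maps $\gamma_*$ act as $L_1$-isometries permuting the fibers. The remaining steps—checking that $X_\rho$ contains $\chi_{S(\Gamma)}$ so that a mean restricts to a state, and that the constructed functional is genuinely left-invariant under each $l_{f_\gamma}$—are routine once the fixed-point transfer is in place.
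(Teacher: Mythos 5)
There is a genuine gap at the heart of your plan: you never supply the mechanism by which the absence of invariant Beltrami differentials actually produces a left-invariant mean on $X_\rho$. You say that the vanishing of $Fix(B_f)\cap L_\infty(A)$ ``removes precisely the obstacle to left $\rho$-amenability,'' but that is the conclusion restated, not an argument; worse, the sentence ``a left-invariant mean \ldots would produce a non-trivial $\rho^*$-fixed Beltrami differential; since no such differential exists\ldots'' read literally proves the opposite of what is wanted (it would show there is no left-invariant mean). The paper's proof runs differently and is short: since $S(\Gamma)$ is a $D$-semigroup it is right amenable (Proposition \ref{pr.Ruellerightamenable}), so $X_\rho=\C\oplus Y_\rho$ carries a \emph{right}-invariant mean $m$. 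If $m$ annihilates $Y_\rho$, it is automatically left invariant because $Y_\rho$ and the constants are both left-invariant subspaces, and we are done. If not, pick $\nu\in L_\infty(A)$ and $\psi\in L_1(A)$ with $m(\varphi_{\psi,\nu})\neq 0$; then $\psi\mapsto m(\varphi_{\psi,\nu})$ is a non-zero bounded functional on $L_1(A)$ which is invariant under every $\rho(f_\gamma)$ precisely because $m$ is right invariant (composing $\psi$ with $\rho(s)$ turns $\varphi_{\psi,\nu}$ into $r_s\varphi_{\psi,\nu}$). By the duality $L_1(A)^*=L_\infty(A)$ this functional is represented by some $\mu\in L_\infty(A)$ fixed by every Beltrami operator $B_{f_\gamma}$, i.e.\ an invariant Beltrami differential on $A$, contradicting the hypothesis. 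This use of the right-invariant mean as the starting object, together with the dichotomy on whether it kills $Y_\rho$, is the missing idea in your proposal.

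A secondary point: the tools you list --- Day's theorem, the separation principle (Proposition \ref{pr.separationprinc}), mean ergodicity, Theorem \ref{th.KORNFELIN}, and Ces\`aro averages --- belong to the proof of Theorem \ref{th.leftRuelle}, not to this proposition, and your worry about non-reflexivity of $L_1(A)$ is misplaced here: the invariant functional one needs lives in $L_1(A)^*=L_\infty(A)$, so there is no bidual to descend from. (The bidual issue genuinely arises in Theorem \ref{th.leftRuelle}, where the invariant functional lives in $L_\infty(A)^*$ and one must prove countable additivity; you appear to be conflating the two arguments.) If you wanted to pursue your mean-ergodicity route instead, you would have to show that $Fix(B_f)=0$ forces the Ces\`aro averages of all matrix coefficients $\varphi_{\psi,\nu}$ to vanish along the whole semigroup $S(\Gamma)$, not merely along the cyclic semigroup generated by $f_*$, and then extract an invariant mean from that limit; none of this is carried out, and it is not obviously easier than the paper's dichotomy.
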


\begin{proof}
    The semigroup $S(\Gamma)$ is right amenable and thence the space 
    $X_\rho \subset L_\infty(S(\Gamma))$ possess a right invariant mean 
    $m$. Recall that $X_\rho=\mathbb{C} \times Y_\rho$ the product of 
    the space of constant functions with the space $Y_\rho.$
    If $Y_\rho \subset ker(m)$, then $m$ is also left invariant, since
    $Y_\rho$ (and $X_\rho$) is left invariant subspace.
    Otherwise, there exists $\psi \in L_1(A)$ and $\nu \in L_\infty(A)$ 
    so that $m(\varphi_{\psi, \nu})\neq 0$. Then $m(g)=m(\varphi_{g,\nu})$ 
    is  continuous and $S(\Gamma)$-invariant functional on $L_1(A).$
    Since $m(\psi)=m(\varphi_{\psi,\nu})\neq 0$, then by Riesz representation 
    theorem, there exist an element $\mu\in L_\infty(A)$
    such that $\mu(f_\gamma)\frac{\overline{f'_\gamma}}{f_\gamma}=\mu$ 
    almost everywhere. Since for every $\gamma\in \Gamma$, $\mu=\mu(\gamma)\frac{\overline{\gamma'}}{\gamma'}$ almost everywhere, $\mu$ defines an invariant Beltrami differential for 
    every $f_\gamma$ on $A$, this contradiction establishes  the fact $Y_\rho\subset ker(m)$ 
    as we wanted to show. 
\end{proof}

We need the following lemma which was proved for rational and meromorphic 
maps with finitely many critical and asymptotic values, that is, for maps in 
the so called Speisser class (see \cite{DomMakSien}, \cite{MakRuelle}, \cite{MakSienbol}).
For the convenience of reader we include a sketch of the proof of this lemma 
in our context following the lines of the works cited above.

\begin{lemma}\label{lm.invariantBeltrami}
Let $f$ be an entire map in class $\mathcal{R}$. Let $h\neq 0 \in L_1(\C)$ 
with $f_*(h)=h$, then there exists an $f$-invariant Beltrami 
differential  $\mu\in L_\infty(\C)$ such that 
$\mu=\frac{\overline{h}}{|h|}$ on the support of $h$.
\end{lemma}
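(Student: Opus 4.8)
The plan is to exhibit the unit line field $\mu=\overline{h}/|h|$ on $E:=\operatorname{supp}(h)$ as the restriction of a globally $B_f$-invariant differential, where $B_f(\varphi)=\varphi(f)\,\overline{f'}/f'$ is the Beltrami operator dual to $f_*$ (Proposition \ref{pr.RuelleBeltrami}). The heart of the argument is a rigidity statement: the fixed-point equation $f_*(h)=h$ forces the preimages of almost every point to contribute to the Ruelle sum with no phase cancellation. First I would combine the pointwise bound $|f_*(h)|\le f_*(|h|)$ with the contraction estimate $\|f_*\|_1\le 1$ to obtain
\[
\int_\C |h|=\int_\C |f_*(h)|\le \int_\C f_*(|h|)\le \int_\C |h|,
\]
so that equality holds throughout and $|f_*(h)|=f_*(|h|)$ almost everywhere. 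Written out, this says
\[
\Big|\sum_{y\in f^{-1}(x)}\frac{h(y)}{(f'(y))^2}\Big|=\sum_{y\in f^{-1}(x)}\frac{|h(y)|}{|f'(y)|^2},
\]
and equality in the triangle inequality means that, for a.e. $x$, all nonzero terms $h(y)/(f'(y))^2$ share a common argument, equal to $\arg h(x)$ whenever $x\in E$.

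From this rigidity I would extract two consequences. When $x\notin E$ the Ruelle sum vanishes while all its terms have a common phase, so there are no nonzero terms; since $f$ is nonsingular this yields the forward invariance $f(E)\subset E$ modulo null sets (and also $E\subset f(E)$, since every point of $E=\operatorname{supp}(f_*h)$ has a preimage in $E$). When $x\in E$ and $y\in f^{-1}(x)\cap E$, the common-phase condition reads $\arg h(y)-2\arg f'(y)=\arg h(f(y))$, which is precisely the identity $\mu(y)=\mu(f(y))\,\overline{f'(y)}/f'(y)$ for $\mu=\overline{h}/|h|$; as both sides have modulus one on $E$, this gives $B_f(\mu)=\mu$ on $E$.

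The remaining obstacle is that extending by $\mu=0$ off $E$ need not be $B_f$-invariant, because points $y\notin E$ may still satisfy $f(y)\in E$. To repair this I would pass to the completely invariant saturation $\tilde E=\bigcup_{n\ge 0}f^{-n}(E)$; forward invariance of $E$ makes the sets $f^{-n}(E)$ increasing, whence $f^{-1}(\tilde E)=\tilde E$. On $\tilde E$ I define $\mu(y)=\mu_E(f^{n}(y))\,\overline{(f^{n})'(y)}/(f^{n})'(y)$, where $n=n(y)$ is minimal with $f^{n}(y)\in E$ and $\mu_E=\overline h/|h|$; the chain rule, together with the base case $n=0$ handled above, shows $B_f(\mu)=\mu$ throughout $\tilde E$, while $\mu=\mu_E$ on $E$. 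Finally I set $\mu=0$ on $\C\setminus\tilde E$; complete invariance gives $f(\C\setminus\tilde E)\subset\C\setminus\tilde E$, so $B_f(\mu)=\mu$ there as well. The resulting $\mu\in L_\infty(\C)$ satisfies $|\mu|\le 1$, lies in $\mathrm{Fix}(B_f)$, and agrees with $\overline h/|h|$ on $\operatorname{supp}(h)$, proving the lemma. The main difficulty is the no-cancellation rigidity of the first step; the bookkeeping in the saturation is routine once the critical points of the iterates (a null set) are discarded so that $(f^{n})'\neq 0$ there.
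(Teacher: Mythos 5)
Your proof is correct and follows essentially the same route as the paper: both extract the no-cancellation rigidity from the chain $\|h\|_1=\|f_*h\|_1\le\|f_*(|h|)\|_1\le\|h\|_1$, deduce that $\nu=\overline{h}/|h|$ satisfies $\nu(f)\overline{f'}/f'=\nu$ on $\operatorname{supp}(h)$, and then extend to a global fixed point of $B_f$ by pulling back along backward orbits. The only cosmetic difference is that the paper packages the extension as the telescoping series $\mu=\nu+\sum_{i\ge 0}\nu_0(f^i)\overline{(f^i)'}/(f^i)'$ with $\nu_0=B_f(\nu)-\nu$, which on $f^{-(n+1)}(E)\setminus f^{-n}(E)$ reduces to exactly your pullback formula on the saturation $\tilde E$.
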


\begin{proof}
Let $h\neq 0 \in L_1(\C)$ such that $f_*(h)=h$, then 
the support of $h$ satisfies  $f(supp(h))=supp(h)$ 
almost everywhere. If $\{\zeta_i\}$ is a complete set of branches
of $f^{-1}$ on $\C\setminus R$, then from the inequality $$\|h\|=\|f_*(h)\|=\int \left| 
\sum_i h(\zeta_i(z)) (\zeta'_i(z))^2 \right| |dz|^2$$
$$\leq \int \sum |h(\zeta_i(z))|\zeta'_i(z)|^2|dz|^2\leq \|h\|,$$   
it follows by Fatou lemma that almost everywhere $$\left|\sum h(\zeta'_i)(\zeta'_i)^2\right|=
\sum|h(\zeta_i)(\zeta'_i)^2|.$$ Therefore, 
for almost every  $x\in supp(h)$ and every $j$  with $\zeta_j(x)\in supp(h)$   
if  $$\alpha_j(x)=h(\zeta_i(x))(\zeta'_i(x))^2 
\textnormal{ and } \beta_j(x)=f_*(h(x))-\alpha_j(x),$$ then 
$$|\alpha_j+\beta_j|=|\alpha_i|+|\beta_j|.$$ Hence
$$\frac{\beta_j(x)}{\alpha_j(x)}=k_j(x)\geq 0,$$
almost everywhere in $supp(h)$.
Thence, for almost every $x\in supp(h)$ and every $j$ with 
$\zeta_j(x)\in supp(h)$ we have  $$\frac{f^*(h(x))}{\alpha_j(x)}=
\frac{\alpha_j(x)+\beta_j(x)}{\alpha_j(x)}=1+k_j(x)\geq 1,$$
thus $h=f_*(h)=(1+k_j)h(\zeta_j)(\zeta'_j)^2.$

If $\nu=\frac{\overline{h}}{|h|}$, then 
$$\nu(f)\frac{\overline{f'}}{f'}=\nu$$ almost everywhere 
in $supp(h).$ Now, if $\nu_0=\nu(f)\frac{\overline{f'}}{f'}-\nu$, 
then $$\mu= \nu+\sum_{i\geq 0} \nu_0(f^i)\frac{\overline{(f^i)'}}{(f^i)'}$$ 
is the desired invariant Beltrami differential.
\end{proof}

\begin{proof}[Proof of Theorem \ref{th.leftRuelle}]
The proof is by contradiction, assume that there exists a 
non-zero fixed point $h\in L_1(A)$ such that $f_*(h)=h$.   
Let $\mu\in L_\infty(A)$ be the invariant Beltrami differential constructed in Lemma
\ref{lm.invariantBeltrami}.  Then  $$\langle \mu, h \rangle =\int_\C \mu h=\int_\C |h| \neq 0.$$ 
Since $f$ acts ergodically on $A$,  if $\alpha\in L_1(A)$ and $\nu\in L_\infty(A)$ are other fixed 
points for $f_*$ and $B_f$, respectively, then $\alpha$ is a multiple of $h$ 
and $\nu$ is a multiple of $\mu$, respectively. Therefore, by 
the separation principle in Proposition \ref{pr.separationprinc}, 
the operator $f_*$ is mean-ergodic in $L_1(A)$.

Now, we claim that every finitely generated subsemigroup of 
$\rho(S(\Gamma))$ is weakly almost periodic. Indeed, the operator
$T(\phi)=\overline{\mu} f_*(\mu\phi)$ is also mean-ergodic in $L_1(A)$.  A straight 
forward computation shows that $$T(\phi)(x)=\sum_{y\in f^{-1}(x)} \frac{\phi(y)}{|f'(y)|^2}$$ 
is a positive mean-ergodic operator, with norm $\|T\|\leq 1$.  By Theorem \ref{th.KORNFELIN}, 
the  operators $T$ and hence $f_*(\phi)=\mu T(\overline{\mu} \phi)$ are weakly almost 
periodic. Therefore,  every operator $(f_{\gamma})_ *$ is weakly 
almost periodic for every $\gamma\in FD(f)$, thus for every finite family 
$\mathcal{B}\subset FD(f)$, the semigroup of $\langle (f_\gamma)_*\rangle_{\gamma 
\in \mathcal{B}}$ is weakly almost periodic. Since $\rho(S(\Gamma))$ 
consists only  of iterations of its generators, we get the claim.

Assume that a subsemigroup $S\subset S(\Gamma)$ is $\rho$-left amenable. 
We claim that there exists an non-zero element $\ell\in L^*_\infty(A)$ which is
fixed by the bidual semigroup $$\rho(S)^{**}=\{t^{**}: t\in \rho(S)\}.$$
Indeed, if $L$ is a left invariant mean on $L_\infty(S)$, then  for 
$\psi \in L_\infty(A)$, $\nu\in L_1(A)$ and every $t\in \rho(S)$, the function
$$\varphi_{\psi, \nu}(t)=\int \psi t(\nu)|dz|^2$$ belongs to $L_\infty(S).$
Thus, the functional $\ell(\psi)=L(\varphi_{\psi,\nu})$ is continuous 
on $L_\infty(A)$.  As $L$ is left invariant, then 
$\ell(t^*(\psi))=\ell(\psi)$ for every  $t\in \rho(S)$ so $\ell\in L^*_\infty(A)$ 
is fixed by $\rho(S)^{**}$ as claimed. 

Now, we use standard arguments from functional analysis 
(see, for instance, \cite{DunfordSchwartz}). The functional 
$\ell$ can be presented by a charge $\sigma_\ell$, i.e., 
a finite, finitely additive set function on the Lebesgue 
$\sigma$-algebra of measurable subsets of $A$ given by $\sigma_\ell(U)=\ell(\chi(U))$ 
where $\chi(U)$ is the characteristic function of the measurable set $U\subset A$. 
By construction $\sigma_\ell$ is null on zero Lebesgue measure 
subsets of $A.$  
 
We claim that $\sigma_\ell$ is an invariant measure absolutely 
continuous with respect to the Lebesgue measure. 
To verify this fact, it is enough to show that $\sigma_\ell$ 
is countably additive.  That is, $$\sigma_\ell(\bigcup A_i)=\sum \sigma_\ell(A_i),$$ 
for every pairwise disjoint family of measurable subsets 
$A_i\subset A.$ Since $\rho(S)$ is weakly almost 
periodic on $L_1(A)$, then for every $\epsilon>0$ and every $\alpha \in L_1(A)$ 
there exists $\delta>0$  such that  $$\int_B |t(\alpha)|\leq \epsilon,$$ 
for every $t\in \rho(S)$ and every measurable 
set $B\subset A$ with Lebesgue measure less than $\delta.$

Fix $\epsilon>0$ and $\delta$ as above.  Let $X$ be a set of finite Lebesgue measure with
$X=\bigcup A_i \subset A$, where $\{A_i\}$ is a family of pairwise disjoint measurable 
subsets of $A$. Then, for every $k>0$, we have by finite additivity 
$$\sigma_\ell(X)=\sum_{i=0}^k \sigma_\ell (A_i)+ \sigma_\ell(\bigcup_{i=k+1}^\infty A_i).$$
Let $k_0$ be such that $m(X_k)\leq \delta$ with 
$X_k=\bigcup_{i=k+1}^\infty A_i$ for every
$k\geq k_0$. Then $$|\sigma_\ell(X_k)|=|L(\chi(X_k))|
\leq |L(\varphi_{(\chi(X_k),f)})| \leq \|L\|
\|\varphi_{(\chi(X_k),f)}\|$$$$=\sup_{g\in S}|
\varphi_{(\chi(X_k),f)}(g)|=\sup_{g\in S}\int_{X_k} |\rho(g)(h)|\leq \epsilon.$$
Hence $\sigma_\ell$ is finite non-zero invariant measure absolutely continuous with 
respect to Lebesgue with density $d\sigma_\ell=\omega(z)dz^2$, $\omega\in L_1(A)$.

To finish the proof, we take a suitable conjugacy to assume that 
the semigroup $S$ contains an iterate  $f^n$ and an 
element $g=f^n_\gamma$ with  $\gamma\in FD(f)$ of infinite order. Then $\omega(z)$ is a 
scalar multiple of $h$ and hence $\gamma_*(h)=h$. Since $\gamma$ has a fundamental domain in $A$
and is of infinite order, then almost every point $z\in A$ has 
a neighborhood of zero $\sigma_\ell$-measure, that is $h(z)=0$ $\sigma_\ell$-almost every where
in $A$, which is a contradiction, this completes the proof of theorem. \end{proof}

   \bibliographystyle{amsplain}
  
\bibliography{workbib}

\providecommand{\bysame}{\leavevmode\hbox to3em{\hrulefill}\thinspace}
\providecommand{\MR}{\relax\ifhmode\unskip\space\fi MR }
% \MRhref is called by the amsart/book/proc definition of \MR.
\providecommand{\MRhref}[2]{%
  \href{http://www.ams.org/mathscinet-getitem?mr=#1}{#2}
}
\providecommand{\href}[2]{#2}
\begin{thebibliography}{10}

\bibitem{Paradoxal}
P.~Ara, F.~Lled\'{o}, and D.~Mart\'{\i}nez, \emph{Amenability and
  paradoxicality in semigroups and {${\rm C}^\ast$}-algebras}, J. Funct. Anal.
  \textbf{279} (2020), no.~2, 108530, 43.

\bibitem{AtelaHu}
P.~Atela and J.~Hu, \emph{{Commuting polynomials and polynomials with same
  {J}ulia set}}, Internat. J. Bifur. Chaos Appl. Sci. Engrg. \textbf{6} (1996),
  no.~12A, 2427--2432.

\bibitem{BabenkoII}
I.~Babenko and S.~Bogaty\u{\i}, \emph{Amenability of the substitution group of
  formal power series}, Izv. Ross. Akad. Nauk Ser. Mat. \textbf{75} (2011),
  no.~2, 19--34.

\bibitem{BabenkoIII}
\bysame, \emph{On topological properties of the formal power series
  substitution group}, Enseign. Math. \textbf{59} (2013), no.~3-4, 271--286.

\bibitem{BabenkoIV}
I.~K. Babenko, \emph{Algebra, geometry and topology of the substitution group
  of formal power series}, Uspekhi Mat. Nauk \textbf{68} (2013), no.~1(409),
  3--76.

\bibitem{BakerPower}
I.~N. Baker, \emph{Permutable power series and regular iteration}, J. Austral.
  Math. Soc. \textbf{2} (1961/62), 265--294.

\bibitem{BakerConj}
\bysame, \emph{Wandering domains in the iteration of entire functions}, Proc.
  London Math. Soc. (3) \textbf{49} (1984), no.~3, 563--576.

\bibitem{BakerDeMarco}
M.~Baker and L.~DeMarco, \emph{Preperiodic points and unlikely intersections},
  Duke Math. J. \textbf{159} (2011), no.~1, 1--29.

\bibitem{Beklaryan}
L.~A. Beklaryan, \emph{On analogues of the {T}its alternative for groups of
  homeomorphisms of the circle and the line}, Mat. Zametki \textbf{71} (2002),
  no.~3, 334--347.

\bibitem{Tucker}
J.P. Bell, K~Huang, W.~Peng, and T.J. Tucker, \emph{A {T}its alternative for
  endomorphisms of the projective line}, J. Eur. Math. Soc. (JEMS) \textbf{26}
  (2024), no.~12, 4903--4922.

\bibitem{BrinSquier}
M.~G. Brin and C.~C. Squier, \emph{Groups of piecewise linear homeomorphisms of
  the real line}, Invent. Math. \textbf{79} (1985), no.~3, 485--498.

\bibitem{CMAmenability}
C.~Cabrera and P.~Makienko, \emph{Amenability and measure of maximal entropy
  for semigroups of rational maps}, Groups Geom. Dyn. \textbf{15} (2021),
  no.~4, 1139--1174.

\bibitem{CMAmenabilityII}
C.~Cabrera and P.~Makienko, \emph{On amenability and measure of maximal entropy
  for semigroups of rational maps: {II}}, Internat. J. Algebra Comput.
  \textbf{33} (2023), no.~6, 1099--1125.

\bibitem{DayMeans}
M.~Day, \emph{Means for the bounded functions and ergodicity of the bounded
  representations of semi-groups}, Trans. Amer. Math. Soc. \textbf{69} (1950),
  276--291.

\bibitem{DayAmenable}
M.~Day, \emph{{Amenable semigroups}}, Illinois J. Math. \textbf{1} (1957),
  509--544.

\bibitem{DomMakSien}
P.~Dom\'{\i}nguez, P.~Makienko, and G.~Sienra, \emph{Ruelle operator and
  transcendental entire maps}, Discrete Contin. Dyn. Syst. \textbf{12} (2005),
  no.~4, 773--789.

\bibitem{Donnelly}
J.~Donnelly, \emph{Subsemigroups of cancellative amenable semigroups}, Int. J.
  Contemp. Math. Sci. \textbf{7} (2012), no.~21-24, 1131--1137.

\bibitem{DunfordSchwartz}
N.~Dunford and J.T. Schwartz, \emph{{Linear operators. {P}art {I}}}, {Wiley
  Classics Library}, John Wiley \& Sons, Inc., New York, 1988, General theory,
  With the assistance of William G. Bade and Robert G. Bartle, Reprint of the
  1958 original, A Wiley-Interscience Publication. \MR{1009162 (90g:47001a)}

\bibitem{EreFunc}
A.~Eremenko, \emph{{Some functional equations connected with the iteration of
  rational functions}}, Algebra i Analiz \textbf{1} (1989), no.~4, 102--116.

\bibitem{EremLyubPathological}
A.~Eremenko and M.~Lyubich, \emph{Examples of entire functions with
  pathological dynamics}, J. London Math. Soc. (2) \textbf{36} (1987), no.~3,
  458--468.

\bibitem{GhiTucZieve}
D.~Ghioca, T.~J. Tucker, and M.~E. Zieve, \emph{{Intersections of polynomials
  orbits, and a dynamical {M}ordell-{L}ang conjecture}}, Invent. Math.
  \textbf{171} (2008), no.~2, 463--483.

\bibitem{GhysSergiescu}
E.~Ghys and V.~Sergiescu, \emph{Sur un groupe remarquable de diff\'eomorphismes
  du cercle}, Comment. Math. Helv. \textbf{62} (1987), no.~2, 185--239.

\bibitem{GranirerExtremely}
E.~Granirer, \emph{Extremely amenable semigroups. {II}}, Math. Scand.
  \textbf{20} (1967), 93--113.

\bibitem{HinkMartSemi}
A.~Hinkkanen and G.~J. Martin, \emph{{The dynamics of semigroups of rational
  functions. {I}}}, {Proc. London Math. Soc. (3)} \textbf{{73}} ({1996}),
  no.~{2}, {358--384}.

\bibitem{ZieveZhan}
Z.~Jiang and M.~E. Zieve, \emph{Functional equations in polynomials},
  arxiv.org/abs/2008.09554v2 (2020).

\bibitem{KapovichHyp}
M.~Kapovich, \emph{{Hyperbolic manifolds and discrete groups}}, {Modern
  Birkh{\"a}user Classics}, Birkh{\"a}user Boston, Inc., Boston, MA, 2009,
  Reprint of the 2001 edition.

\bibitem{Klawe}
M.~Klawe, \emph{Semidirect product of semigroups in relation to amenability,
  cancellation properties, and strong {F}{\o}lner conditions}, Pacific J. Math.
  \textbf{73} (1977), no.~1, 91--106.

\bibitem{KornfeldLin}
I.~Kornfeld and M.~Lin, \emph{{Weak almost periodicity of {$L_1$} contractions
  and coboundaries of non-singular transformations}}, Studia Math. \textbf{138}
  (2000), no.~3, 225--240. \MR{1758856 (2001b:28023)}

\bibitem{Krengel}
U.~Krengel, \emph{{Ergodic theorems}}, {de Gruyter Studies in Mathematics},
  vol.~6, Walter de Gruyter \& Co., Berlin, 1985.

\bibitem{Levinrelations}
G.~M. Levin, \emph{Symmetries on a {J}ulia set}, Dynamical systems and
  statistical mechanics ({M}oscow, 1991), Adv. Soviet Math., vol.~3, Amer.
  Math. Soc., Providence, RI, 1991, pp.~131--142.

\bibitem{L}
M.~Lyubich, \emph{Dynamics of rational transformations: topological picture},
  Uspekhi Mat. Nauk \textbf{41} (1986), no.~4(250), 35--95.

\bibitem{MakRuelle}
P.~Makienko, \emph{{Remarks on the Ruelle operator and the invariant line
  fields problem: II}}, Ergodic Theory and Dynamical Systems \textbf{25}
  (2005), no.~05, 1561--1581.

\bibitem{MakSienbol}
P.~Makienko and G.~Sienra, \emph{Poincar\'{e} series and instability of
  exponential maps}, Bol. Soc. Mat. Mexicana (3) \textbf{12} (2006), no.~2,
  213--228.

\bibitem{Margulisfree}
G.~Margulis, \emph{Free subgroups of the homeomorphism group of the circle}, C.
  R. Acad. Sci. Paris S\'{e}r. I Math. \textbf{331} (2000), no.~9, 669--674.

\bibitem{Mdyn}
J.~Milnor, \emph{{Dynamics of one complex variable}}, Friedr. Vieweg \& Sohn,
  1999.

\bibitem{PakovichAmenRat}
F.~Pakovich, \emph{On amenable semigroups of rational functions}, Trans. Amer.
  Math. Soc. \textbf{375} (2022), no.~11, 7945--7979.

\bibitem{PakovichrightamenPow}
F.~Pakovich, \emph{{Right amenability in semigroups of formal power series}},
  arxiv.org/abs/2208.04640 (2022).

\bibitem{PakovichPowerseries}
F.~Pakovich, \emph{Functional equations in formal power series},
  arxiv.org/abs/2208.08365 (2023).

\bibitem{PatersonBook}
A.~L.~T. Paterson, \emph{Amenability}, Mathematical Surveys and Monographs,
  vol.~29, American Mathematical Society, Providence, RI, 1988.

\bibitem{Scheinberg}
St. Scheinberg, \emph{Power series in one variable}, J. Math. Anal. Appl.
  \textbf{31} (1970), 321--333.

\bibitem{Solodov}
V.~V. Solodov, \emph{Homeomorphisms of a line and a foliation}, Izv. Akad. Nauk
  SSSR Ser. Mat. \textbf{46} (1982), no.~5, 1047--1061, 1136.

\bibitem{Ye}
Hexi Ye, \emph{Rational functions with identical measure of maximal entropy},
  Adv. Math. \textbf{268} (2015), 373--395.

\bibitem{YuanZhangCal}
X.~Yuan and S-W. Zhang, \emph{Calabi-{Y}au theorem and algebraic dynamics},
  Preprint, 2010.

\end{thebibliography}
\Addresses
\end{document}